\newtheorem{thm}{Theorem}[section]
\newtheorem{cor}[thm]{Corollary}
\newtheorem{lem}[thm]{Lemma}
\newtheorem{prop}[thm]{Proposition}
\newtheorem*{lem*}{Lemma}
\newtheorem*{thm*}{Theorem}
\theoremstyle{definition}
\newtheorem{eg}[thm]{Example}
\newtheorem{rem}[thm]{Remark}
\newcommand{\dkcub}{\catfont{cub}}
\newcommand{\dkch}{\catfont{ch}}
\newcommand{\chains}{\catfont{ch}}
\newcommand{\PARSIMONIOUSBOX}{\boxplus}
\newcommand{\DISLATS}{\catfont{Dis}}
\newcommand{\CUBICALSETS}{\hat\BOX}
\newcommand{\STARS}{\catfont{Stars}}
\newcommand{\SIMPLICIALSETS}{\hat\DEL}
\newcommand{\pls}{\raisebox{.4\height}{\scalebox{.6}{+}}}
\newcommand{\mins}{\raisebox{.4\height}{\scalebox{.8}{-}}}
\newcommand{\dihomeo}{\varphi} 
\newcommand{\cnerve}{\catfont{ner}_\square}
\newcommand{\snerve}{\catfont{ner}_\DEL}
\newcommand{\cset}[1]{\ifthenelse{1=#1}{C}{\ifthenelse{2=#1}{A}{B}}}
\newcommand{\coface}{\delta}
\newcommand{\codeg}{\sigma}
\newcommand{\transpose}{\tau}
\newcommand{\coconnect}{\gamma}
\newcommand{\reversal}{\catfont{reverse}}
\newcommand{\diagonal}{\catfont{diag}}
\newcommand{\LEMBoxAutomorphisms}{Lemma 1, \cite{krishnan2023cubicalII}}
\newcommand{\LEMBoxEpimorphisms}{Lemma 2.9, \cite{krishnan2023cubicalII}}
\newcommand{\PROPUnderlyingMonoidalPreorderedSets}{Proposition 5.11, \cite{krishnan2009convenient}}
\newcommand{\ThmDiEmbed}{Theorem 2.5, \cite{fernandes2007classification}}
\newcommand{\PropTop}{Proposition 5.8, \cite{krishnan2009convenient}}
\newcommand{\PROPInclusions}{Proposition 5.6, \cite{krishnan2015cubical}}
\newcommand{\PropSD}{Proposition 7.4, 7.5, \cite{krishnan2015cubical}}
\newcommand{\THMPospaces}{Theorem 4.7, \cite{krishnan2009convenient}}
\newcommand{\PROPLocCompactStreams}{Theorem 5.4, \cite{krishnan2009convenient}}
\newcommand{\ThmDHomotopy}{Theorem 7.1, \cite{krishnan2015cubical}}
\newcommand{\THMVanKampen}{Theorem 1, \cite{krishnan2015cubical}}
\newcommand{\THMXClosed}{Theorem 5.12, \cite{krishnan2009convenient}}
\newcommand{\THMQCubical}{Theorem 4.18, \cite{jardine2006categorical}}
\newcommand{\THMTriEquivalence}{Theorem, \cite{jardine2002cubical}}
\newcommand{\CORTriEquivalence}{Theorem, \cite{jardine2002cubical}}
\newcommand{\THMRealizeEquivalence}{Theorem, \cite{jardine2002cubical}}
\newcommand{\THMAMS}{Theorem, \cite{jardine2002cubical}}
\newcommand{\THMTransfer}{Theorem 3.10, \cite{riehl2011algebraic}}
\newcommand{\THMPointwise}{Theorem 4.3, \cite{riehl2011algebraic}}
\newcommand{\THMClassicalEquivalence}{Theorem 4.3, \cite{riehl2011algebraic}}
\newcommand{\LEMQTCocontinuous}{Lemma 7.2, \cite{krishnan2015cubical}}
\newcommand{\LEMSdTri}{Lemma 7.2, \cite{krishnan2015cubical}}
\newcommand{\LEMMincubicalMonics}{Lemma 6.2, \cite{krishnan2015cubical}}
\newcommand{\EGMSpaces}{Examples 2.2 and 3.8, \cite{cole2006mixing}}
\newcommand{\SimplicialGraphs}{Lemma \ldots, \cite{krishnan2015cubical}}
\newcommand{\THMBirkhoff}{Theorem, {\cite{birkhoff1937rings}}}
\newcommand{\THMCat}{Proposition in \S4.2C, {\cite{gromov1987hyperbolic}}}
\newcommand{\LEMBoxInclusions}{Lemma 6.2, \cite{krishnan2015cubical}}
\newcommand{\COREZCellularModel}{Corollary 4.15, \cite{isaacson2011symmetric}}
\newcommand{\PROPTransferredRegularity}{Proposition 6.12, \cite{isaacson2011symmetric}}
\newcommand{\PROPTestFunctor}{Proposition 1.2.9, \cite{maltsiniotis2005theorie}}
\newcommand{\THMOldQuillenEquivalence}{Theorem 8.4.38, \cite{cisinskiprefaisceaux}}
\newcommand{\THMFrobenius}{Theorem 4.8, \cite{gambino2017frobenius}}
\newcommand{\THMCATZero}{Theorem 2.5, \cite{ardila2012geodesics}}
\newcommand{\PROPSemilatticeBirkhoff}{Proposoitiions 5.7,5.8, \cite{gonzalez2021finite}}
\newcommand{\THMBirkhoffDuality}{Proposoitiion 1, \cite{wraith1993using}}
\newcommand{\PROPGrothendieckTestCriterion}{Proposition, p.86 44(d), \cite{grothendieck1983pursuing}}
\newcommand{\THMMinimalSymmetricMonoidalSite}{Theorem 3.10, \cite{krishnan2023cubicalII}}
\theoremstyle{plain}
\newtheorem*{thm:degeneracies.unneeded}{Theorem \ref{thm:degeneracies.unneeded}}
\newtheorem*{prop:dold-kan.intro.case}{Proposition \ref{prop:dold-kan} [Case $\BOX=\PARSIMONIOUSBOX$]}
\newtheorem*{prop:cubical.homology.intro.case}{Proposition \ref{prop:cubical.homology} [Case $\BOX=\PARSIMONIOUSBOX$]}
\newtheorem*{prop:cubical.homology}{Proposition \ref{prop:cubical.homology}}
\newtheorem*{prop:classical.model.structure.intro.case}{Proposition \ref{prop:classical.model.structure} [Case $\BOX=\PARSIMONIOUSBOX$]}
\newtheorem*{thm:proper.test.intro.case}{Theorem \ref{thm:proper.test} [Case $\BOX=\PARSIMONIOUSBOX$]}
\newtheorem*{prop:Grothendieck.test.criterion}{\PROPGrothendieckTestCriterion}
\newtheorem*{thm:minimal.symmetric.monoidal.site}{\THMMinimalSymmetricMonoidalSite{}}
\newtheorem*{thm:semilattice.birkhoff}{\PROPSemilatticeBirkhoff}
\newtheorem*{thm:boolean.duality}{Theorem, \cite{church1940numerical}}
\newtheorem*{thm:birkhoff.duality}{\THMBirkhoffDuality}
\newtheorem*{thm:cat.zero}{\THMCATZero}
\newtheorem*{lem:box.automorphisms}{\LEMBoxAutomorphisms}
\newtheorem*{lem:box.epimorphisms}{\LEMBoxEpimorphisms}
\newtheorem*{prop:strict.test}{Corollary 2, \cite{buchholtz2017varieties}}
\newtheorem*{prop:connection.cube.maps}{Proposition 2.3, \cite{maltsiniotis2009categorie}}
\newtheorem*{thm:lattice.box.characterization}{Theorem 3, \cite{krishnan2023cubicalII}}
\newtheorem*{thm:main}{Theorem {\ref{thm:main}}}
\newtheorem*{cor:formula}{Corollary {\ref{cor:formula}}}
\newtheorem*{cor:sd}{Corollary {\ref{cor:sd}}}
\newtheorem*{cor:gammas}{Corollary {\ref{cor:gammas}}}
\newtheorem*{thm:birkhoff}{\THMBirkhoff}
\newtheorem*{cor:ez.cellular.model}{\COREZCellularModel}
\newtheorem*{prop:transferred.regularity}{\PROPTransferredRegularity}
\newtheorem*{prop:test.functor}{\PROPTestFunctor}
\newtheorem*{thm:old.quillen.equivalence}{\THMOldQuillenEquivalence}
\newtheorem*{thm:frobenius}{\THMFrobenius}
\newtheorem*{thm:algebraic.characterization}{Theorem \ref{thm:algebraic.characterization}}
\newtheorem*{prop:geometric.characterization}{Proposition \ref{prop:geometric.characterization}}
\newcommand{\THMSimplicialClassicalEquivalence}{Theorem \S2.3, \cite{quillen1967homotopical}}
\newtheorem*{thm:simplicial.classical.equivalence}{\THMSimplicialClassicalEquivalence}
\newcommand{\EGSSets}{Examples 3.6 and 4.9, \cite{gambino2017frobenius}}
\newtheorem*{eg:ssets}{\EGSSets}
\newtheorem*{eg:m-spaces}{\EGMSpaces}
\newtheorem*{lem:mincubical.monics}{\LEMMincubicalMonics}
\newtheorem*{thm:x-closed}{\THMXClosed}
\newtheorem*{thm:pointwise}{\THMPointwise}
\newtheorem*{thm:transfer}{\THMTransfer}
\newtheorem*{thm:small-object-argument}{\THMTransfer}
\newtheorem*{thm:cat}{\THMCat}
\newtheorem*{lem:sd.tri}{\LEMSdTri}
\newtheorem*{lem:qt-cocontinuous}{\LEMQTCocontinuous}
\newtheorem*{prop:underlying.monoidal.preordered.sets}{\PROPUnderlyingMonoidalPreorderedSets}
\newtheorem*{prop:locally.compact.streams}{\PROPLocCompactStreams}
\newtheorem*{thm:pospaces}{\THMPospaces}
\newtheorem*{thm:diembed}{\ThmDiEmbed}
\newtheorem*{thm:van-kampen}{\THMVanKampen}
\newtheorem*{thm:q-cubical}{\THMQCubical}
\newtheorem*{thm:tri.equivalence}{\THMTriEquivalence}
\newtheorem*{cor:tri.equivalence}{\CORTriEquivalence}
\newtheorem*{thm:realize.equivalence}{\THMRealizeEquivalence}
\newtheorem*{thm:ams}{\THMAMS}
\newtheorem*{eg:mixing}{Example 2.2 from \cite{cole2006mixing}}
\newtheorem*{thm:classical-equivalence}{\THMClassicalEquivalence}
\newtheorem*{prop:hp}{Proposition \ref{prop:hp}}
\newtheorem*{cor:types}{Corollary \ref{cor:types}}
\newtheorem*{prop:extensions}{Proposition \ref{prop:extensions}}
\newtheorem*{lem:geodesic.metric}{Lemma 3.70, \cite{goubault2020directed}}
\newtheorem*{lem:simplicial.graphs}{\SimplicialGraphs}
\newtheorem*{cor:algebraic-q-cubical}{Corollary \ref{cor:algebraic-q-cubical}}
\newtheorem*{cor:algebraic-m-spaces}{Corollary \ref{cor:algebraic-m-spaces}}
\newtheorem*{thm:approx}{Theorem {\ref{thm:approx}}}
\newtheorem*{thm:m-streams}{Theorem {\ref{thm:m-streams}}}
\newtheorem*{thm:cubical.dihomotopy}{Theorem {\ref{thm:cubical.dihomotopy}}}
\newtheorem*{cor:cubical.dihomotopy}{Corollary {\ref{cor:cubical.dihomotopy}}}
\newtheorem*{prop:cubical.homotopies}{Proposition {\ref{prop:cubical.homotopies}}}
\newtheorem*{prop:nerve.cubcats}{Proposition {\ref{prop:nerve.cubcats}}}
\newtheorem*{prop:kan.cubcat}{Proposition {\ref{prop:kan.cubcat}}}
\newtheorem*{cor:kan.bicubcat}{Corollary {\ref{cor:kan.bicubcat}}}
\newtheorem*{thm:fibrant}{Theorem {\ref{thm:fibrant}}}
\newtheorem*{prop:semifibrant}{Proposition {\ref{prop:semifibrant}}}
\newtheorem*{thm:equivalence}{Theorem {\ref{thm:equivalence}}}
\newtheorem*{cor:type-theoretic}{Corollary {\ref{cor:type-theoretic}}}
\newtheorem*{thm:q-spaces}{Theorem from \cite[\S II.3]{quillen2006homotopical}}
\newtheorem*{cor:q-equivalences}{Corollary {\ref{cor:q-equivalences}}}
\newtheorem*{cor:m-equivalences}{Corollary {\ref{cor:m-equivalences}}}
\newtheorem*{cor:excision}{Corollary {\ref{cor:excision}}}
\newtheorem*{cor:equivalence}{Corollary {\ref{cor:equivalence}}}
\newtheorem*{cor:cubical.diequivalence}{Corollary {\ref{cor:cubical.diequivalence}}}
\newtheorem*{thm:cartesian.closed.streams}{\THMXClosed}
\newtheorem*{thm:whitehead}{Theorem \ref{thm:whitehead}}
\newtheorem*{prop:topological}{\PropTop}
\newtheorem*{prop:sd}{\PropSD}
\newtheorem*{prop:inclusions}{\PROPInclusions}
\newtheorem*{thm:d-homotopy}{\ThmDHomotopy}
\newtheorem*{lem:box-inclusions}{\LEMBoxInclusions{}}
\title{A convenient category of cubes}
\author{Sanjeevi Krishnan and Emily Rudman}
\begin{document}
\begin{abstract}
  We claim that the cube category whose morphisms are the interval-preserving monotone functions between finite Boolean lattices is a convenient general-purpose site for cubical sets.
  This category is the largest possible concrete Eilenberg-Zilber variant excluding the reversals and diagonals. 
  The category admits as monoidal generators all functions between the ordinals $[0]$ and $[1]$ and all monotone surjections $\boxobj{n}\ra[1]$.
  Consequently, morphisms in the minimal symmetric monoidal variant of the cube category containing coconnections of one kind can be characterized as the interval-preserving semilattice homomorphisms between finite Boolean lattices.
  There exists a model structure on our variant of cubical sets that is at once Quillen equivalent to and left induced from the classical model structure on simplicial sets along triangulation.
  This model structure is proper and hence its fibrations interpret Martin-Lof dependent types.  
\end{abstract}

\maketitle
\tableofcontents
\addtocontents{toc}{\protect\setcounter{tocdepth}{1}}

\section{Introduction}
Spaces are often combinatorially represented as simplicial or cubical sets.
A cubical representation has some advantages over a simplicial representation.
One is that homotopy invariants like cup products and triad homotopy groups are more naturally described in terms of cubes \cite{huebschmann2012ronald} than simplices \cite{verity2007weak2,verity2007weak1}. 
Another is that cubical sets, unlike simplicial sets \cite[Remark A.3]{krishnan2022uniform}, admit approximation theorems with strong claims of naturality (eg. \cite{krishnan2015cubical,krishnan2022uniform}).
Another is that some fundamental axioms in synthetic simplicial homotopy theory become theorems in synthetic cubical homotopy theory \cite{bezem2014model}.  
Yet another is that non-positive curvature is easier to characterize in cubical \cite{gromov1987hyperbolic} than in simplicial (cf. \cite{januszkiewicz2006simplicial}) settings.    
Just as simplicial sets are presheaves over the category $\DEL$ of non-empty finite ordinals $[0],[1],[2],\ldots$, cubical sets are presheaves over a category $\BOX$ of the finite Boolean lattices
\begin{equation}
  \label{eqn:boolean.lattices}
[0],[1],[1]^2,[1]^3,\ldots
\end{equation}

The problem is that the definition of $\BOX$-morphisms is not standard \cite{buchholtz2017varieties,grandis2003cubical}.
The $\DEL$-morphisms can be both explicitly characterized as the monotone functions between non-empty finite ordinals $[0],[1],[2],\ldots$ and implicitly characterized by a presentation in terms of cofaces and codegeneracies \cite{may1992simplicial}. 
The original variant $\DEL_1^*$ of $\BOX$ in the literature, the free monoidal category with unit $[0]$ generated by the category $\DEL_1$ of ordinals $[0]$ and $[1]$ and all functions between them, has a simple presentation in terms of cofaces and codegeneracies \cite[Theorem 4.2]{grandis2003cubical}.
But unlike $\DEL$-morphisms, $\DEL_1^*$-morphisms do not admit a short and explicit characterization.  
And formal properties of the simplex category $\DEL$ convenient for combinatorial homotopy are lacking in the original cube category $\DEL_1^*$.
Recent years have seen an explosion of variants for $\BOX$ proposed to address different shortcomings of $\DEL_1^*$ (eg. \cite{angiuli2021syntax,buchholtz2017varieties,grandis2003cubical,isaacson2011symmetric,maltsiniotis2009categorie}).

The current situation in cubical theory resembles an earlier situation in topology.
Various technical lemmas in homotopy theory, at their natural levels of generality, called for various fussy point-set axioms on a topological space.  
For some variants of $\BOX$, finite Cartesian products of cubical sets model finite Cartesian products of the spaces they represent up to homotopy equivalence \cite{maltsiniotis2009categorie}.  
For some variants of $\BOX$, a cubical homotopy theory can be defined that models datatypes in certain higher order programming languages \cite{angiuli2021syntax,cavallo2020unifying}. 
For some variants of $\BOX$, cubical sets can be used to model homotopy theories of spaces with extra geometry, such as a metric \cite{goubault2020directed,krishnan2022uniform}, uniformity \cite{krishnan2022uniform}, or directionality \cite{goubault2020directed,krishnan2023cubicalII}.  
For some variants of $\BOX$, the natural tensor product on cubical sets is symmetric and thereby convenient for various applications \cite{grandis2009role,isaacson2011symmetric}.
Nowdays, a topological space in homotopy theory is typically redefined to just be a (weak) Hausdorff k-space so as to avoid having to make different point-set assumptions for different applications.
In this same spirit, we propose a novel variant $\PARSIMONIOUSBOX$ for $\BOX$ whose objects are the finite Boolean lattices (\ref{eqn:boolean.lattices}) and whose morphisms are the \textit{interval-preserving monotone functions} as a general-purpose choice for $\BOX$. 

This variant occupies a middle ground.
On one hand, the $\PARSIMONIOUSBOX$-morphisms include but are not generated by cofaces, codegeneracies, coordinate permutations, and codegeneracy-like maps called \textit{coconnections}.  
On the other hand, the $\PARSIMONIOUSBOX$-morphisms exclude all \textit{reversals} and \textit{diagonals}.
The exclusion of reversals allows cubical sets to admit edge-orientations and thus admit \textit{directed topological realizations} (e.g. \cite{krishnan2023cubicalII}).  
The exclusion of diagonals allows $\BOX$-objects to model topological cubes with their $\ell_p$ metrics and hence allows cubical sets to admit \textit{$\ell_p$-realizations} (e.g. \cite{krishnan2022uniform}), not just for $p=\infty$ but for all $1\leqslant p\leqslant\infty$; $\ell_2$-realizations in particular generalize uniquely geodesic CAT(0) cubical complexes.
The category $\PARSIMONIOUSBOX$ is the largest reasonable variant of $\BOX$ excluding the reversals and diagonals:

\begin{prop:geometric.characterization}
  Consider a subcategory $\shape{1}$ of $\SETS$ such that the following all hold:
           \begin{enumerate}
		   \item $\shape{1}$ excludes the reversal $[1]\ra[1]$ and diagonal $[1]\ra[1]^2$
		   \item $\shape{1}$ contains $\DEL_1^*$ as a wide subcategory
		   \item every $\shape{1}$-morphism factors into a composite of a surjective $\shape{1}$-morphism followed by an injective $\shape{1}$-morphism.
  \end{enumerate}
  Then $\shape{1}$ is a subcategory of $\PARSIMONIOUSBOX$.
  The choice $\shape{1}=\PARSIMONIOUSBOX$ satisfies all three conditions above.  
\end{prop:geometric.characterization}

We can identify generators for $\PARSIMONIOUSBOX$ as a symmetric monoidal category.   
Unlike most variants of $\BOX$ in the literature, $\PARSIMONIOUSBOX$ does not admit a finite monoidal presentation.  
Also unlike most variants of $\BOX$ in the literature, $\PARSIMONIOUSBOX$-morphisms do not generally preserve any obvious algebraic structure on finite Boolean lattices.  
Nonetheless, $\PARSIMONIOUSBOX$-morphisms admit the following decomposition.
Birkhoff Duality allows us to reinterpret the endomorphism operad of $[1]$ in $\PARSIMONIOUSBOX$ as what we might call the \textit{distributive lattice operad}, the operad of free and finite distributive lattices, where the operadic action substitutes generators from such a lattice with terms from other such lattices (c.f. \cite[Exercise 2.2.11]{leinster2004higher}).
The symmetric monoidal category $\PARSIMONIOUSBOX$ is almost generated by this operad but for the presence of the terminal function $[1]\ra[0]$, by the following splitting result.

\begin{thm:algebraic.characterization}
  The category $\PARSIMONIOUSBOX$ is generated as a symmetric monoidal category by \ldots
  \begin{enumerate}
    \item the unique function $\sigma:[1]\twoheadrightarrow[0]$
    \item the two functions of the form $[0]\ra[1]$
    \item all monotone surjections $\boxobj{n}\twoheadrightarrow[1]$
  \end{enumerate}
	For each $\PARSIMONIOUSBOX$-morphism $\phi:\boxobj{m}\ra\boxobj{n}$, there exists a unique $n$-tuple $(m_1,m_2,\ldots,m_n)$ of natural numbers with $m_1+m_2+\cdots+m_n\leqslant m$ and unique coset in $\Sigma_m/(\Sigma_{m_1}\times\cdots\times\Sigma_{m_n})$ such that for each representative $g\in\Sigma_m$ of that coset, there exist unique monotone functions $\phi_i:\boxobj{m_i}\ra[1]$ for $1\leqslant i\leqslant n$, each non-constant in each of its coordinates, with $\phi g=\phi_1\otimes\cdots\otimes\phi_n\otimes\sigma^{\otimes(m-m_1-\cdots-m_n)}$.
\end{thm:algebraic.characterization}

This result has direct applications.  
The first is a computational implementation of cubical sets when $\BOX=\PARSIMONIOUSBOX$.   
The theorem suggests a representation of $\PARSIMONIOUSBOX$-morphisms in terms of natural numbers, formal expressions involving lattice operations, and permutations.  
The action of the distributive lattice operad and a solution to the word problem for free distributive lattices \cite{takeuchi1969word} can be used to give an algorithm for composition in $\PARSIMONIOUSBOX$ in terms of those representations.
The second is a short, explicit characterization of the morphisms in the minimal symmetric monoidal variants of $\BOX$ (e.g. \cite[Theorem 3.10]{krishnan2023cubicalII}) containing coconnections of one kind [Corollaries \ref{cor:join.characterization} and \ref{cor:meet.characterization}].
Short, explicit characterizations for $\BOX$-morphisms make it easy to functorially construct finite CAT(0) cubical complexes from a nerve-like construction [Proposition \ref{prop:boolean.functions}, Corollary \ref{cor:sd} and last paragraph of \S\ref{sec:cat0.cubical.complexes}] based on observations made elsewhere \cite{ardila2012geodesics,gonzalez2021finite}.   

The most common application of cubical sets is a combinatorial description of classical homotopy theory.  
We give such a description in the form of a \textit{classical model structure} on cubical sets $\hat\BOX$, left induced along topological realization.
In the statement below, existence follows from general abstract arguments \cite[Theorem 2.2.1]{hess2017necessary} but the equivalence is not entirely straightforward.  

\begin{prop:classical.model.structure.intro.case}
  There exists a model structure on $\hat\PARSIMONIOUSBOX$ in which \ldots
  \begin{enumerate}
	  \item \ldots a weak equivalences $\psi$ is characterized by $|\psi|$ a homotopy equivalence
      \item \ldots the cofibrations are the monos
  \end{enumerate}
  Topological realization defines the left map of a Quillen equivalence from $\hat\PARSIMONIOUSBOX$ equipped with this model structure to the category of topological spaces equipped with its usual model structure.
\end{prop:classical.model.structure.intro.case}

A major source of recent interest in cubical sets is their potential to constructively model datatypes in higher order programming languages \cite{anguili2018cartesian,awodey2018cubical, awodey2024equivariant,mortberg2020cubical}.  
In a right proper model structure on a presheaf category in which the cofibrations are the monos, fibrations model datatypes parametrized by other datatypes, (fibered) equivalences model equivalences of (parametrized) datatypes, and right properness models substitutability of one parameter with an equivalent parameter \cite{shulman2015univalence}.
Right properness for simplicial sets follows from the fact that geometric realizations of Kan fibrations are Serre fibrations \cite{quillen1968geometric}.
Just enough right lifting properties are preserved by geometric realizations [Lemma \ref{lem:dold.fibration}] to make the essence of that argument carry through to the cubical setting $\hat\BOX$, at least when $\BOX$ is $\PARSIMONIOUSBOX$ or certain subvariants. 

\begin{thm:proper.test.intro.case}
  The classical model structure on $\hat\PARSIMONIOUSBOX$ is proper.
\end{thm:proper.test.intro.case}

Each choice for $\BOX$ requires tradeoffs.
Firstly, cubical groups defined by $\BOX=\BOX_c$, like simplicial groups, are automatically fibrant in classical/test model structures. 
It is not clear whether that fibrancy continues to hold for symmetric monoidal extensions $\BOX$ of $\BOX_c$ like $\PARSIMONIOUSBOX$.  
Secondly, a cubical Dold-Kan equivalence for $\BOX=\BOX_c$ no longer straightforwardly (c.f. \cite{lack2015combinatorial}) holds when $\BOX$ is extended to larger variants.
But the inclusion $\BOX_c\subset\PARSIMONIOUSBOX$ allows, for example, a recent construction of representing objects for bounded cubical cohomology on connected cubical sets \cite{krishnan2022uniform}, based on the Dold-Kan equivalence, to adapt to the setting $\BOX=\PARSIMONIOUSBOX$.  
Thirdly, implementations of synthetic homotopy theory call for explicit choices of generating acyclic cofibrations in \textit{type-theoretic} \cite{shulman2015univalence} model structures.  
Such choices, available for several variants of $\BOX$, are not yet available for $\PARSIMONIOUSBOX$.  
Lastly, diagonals appear important in recent cubical interpretations of \textit{higher inductive types}, implementations of homotopy colimits as datatype constructors. 
But the exclusion of those same diagonals appear important in current proofs of cubical approximation \cite{krishnan2015cubical,krishnan2022uniform,krishnan2023cubicalII} with respect to homotopy theories in which homotopy colimit decompositions are rare.  

\subsection*{Organization}
The first half culminates in different characterizations of $\PARSIMONIOUSBOX$ [Proposition \ref{prop:geometric.characterization} and Theorem \ref{thm:algebraic.characterization}] as well as two subvariants [Corollaries \ref{cor:join.characterization} and \ref{cor:meet.characterization}]. 
The second half culminates in different observations about cubical homotopy theory [Proposition \ref{prop:classical.model.structure} and Theorem \ref{thm:proper.test}] not only for $\BOX=\PARSIMONIOUSBOX$ but also for other variants of $\BOX$ where possible.  
Adaptations of cubical approximation beyond the setting of classical homotopy for $\BOX=\PARSIMONIOUSBOX$ are mostly straightforward but not included in this paper.  
Along the way, a nerve-like construction of finite CAT(0) cubical complexes, based on observations made elsewhere \cite{ardila2012geodesics,gonzalez2021finite}, is described in \S\ref{sec:cat0.cubical.complexes}. 

\section{Conventions}\label{sec:conventions}
This section first fixes some conventions.
Let $k,m,n,p,q$ denote natural numbers.
Let $\I$ denote the unit interval.
Let $\ira$ denote an inclusion of some sort, such as an inclusion of a subset into a set, a subspace into a space, or a subcategory into a category.
A chain complex is \textit{connective} if it is non-negatively graded.  
The following symbols will denote certain distinguished maps, defined in \S\ref{sec:cubes}, in various cube categories: $\codeg,\coface_{\pm},\coconnect_{\pm},\transpose,\diagonal,\reversal$.  

\subsection{Categories}
Let $\cat{1},\cat{2}$ denote arbitrary categories.
Let $\mathscr{A}$ denote an Abelian category. 
Let $\shape{1}$ denote a small category.
Let $\star$ denote a terminal object in a given category.
For a given monoidal category, let $\otimes$ denote its tensor product.
Notate special categories as follows.
\vspace{.1in}\\
\begin{tabular}{rll}
  $\SETS$ & sets (and functions)\\
  $\DISLATS$ & finite distributive lattices (and lattice homomorphisms) \\
  $\POSETS$ & finite posets (and monotone functions) \\
  $\CATS$ & small categories (and functors) \\
  $\DEL$ & non-empty finite ordinals and monotone functions & \S\ref{sec:simplices}\\
  $\DEL_1$ & the full subcategory of $\SETS$ having objects $[0],[1]$ & \S\ref{sec:simplices} \\ 
  $\PARSIMONIOUSBOX$ & proposed variant of the cube category & \S\ref{sec:cubes}\\ 
\end{tabular}

\vspace{.1in}

We often regard $\SETS$ and $\POSETS$ as closed Cartesian monoidal categories.  
We write $\BOX$ to denote an operating definition of cubes, over which \textit{cubical sets} are defined as $\SETS$-valued presheaves.
Write $\hat{\shape{1}}$ for the category of presheaves over $\shape{1}$, the functor category
$$\hat{\shape{1}}=\SETS^{\OP{\shape{1}}}.$$

Write $\shape{1}[-]$ for the Yoneda embedding $\shape{1}\ira\hat{\shape{1}}$.  
For each diagram $\zeta:\shape{1}\ra\cat{1}$ to a cocomplete category $\cat{1}$, write $\zeta_!\dashv\zeta^*$ for the adjunction $\hat{\shape{1}}\lras\cat{1}$ whose left adjoint $\zeta_!$ is the left Kan extension of $\zeta$ along $\shape{1}[-]:\shape{1}\ra\hat{\shape{1}}$ and whose right adjoint $\zeta^*$ naturally sends each $\cat{1}$-object $o$ to $\cat{1}(\zeta(-),o)$.  
Let $F/G$ denote the comma category for diagrams $F,G$ in the same category.
For a diagram $F$ in $\hat{\shape{1}}$, let $\shape{1}/F=\shape{1}[-]/F$.
Let $\id_o$ denote the identity morphism for an object $o$ in a given category.
For each object $o$ in a given closed monoidal category, $o^{(-)}$ will denote the right adjoint to the endofunctor $o\otimes-$.
 
\subsection{Posets}
Write $\leqslant_P$ for the partial order on a poset $P$.
Write $[x,z]_P$ for the subposet 
$$[x,z]_P=\{y\in P\;|\;x\leqslant_Py\leqslant_Pz\}$$
of a poset $P$ and call $[x,z]_P$ an \textit{interval in $P$} if it is non-empty. 
In a poset $P$, an element $z$ is an \textit{immediate successor} to an element $x$ if $x\leqslant_Pz$ and $x=y$ or $y=z$ whenever $x\leqslant_Py\leqslant_Pz$.  
A function $\phi:P\ra Q$ between posets is \textit{monotone} if $\phi(x)\leqslant_Q\phi(y)$ whenever $x\leqslant_Py$.
Write $\POSETS$ for the category of posets and monotone functions between them.
Let $[n]$ denote the poset $\{0,1,\ldots,n\}$ equipped with the usual order.

\subsection{Semilattices}
A \textit{meet-semilattice} is a poset having all binary infima. 
Write $\wedge_L$ for the binary inifimum operation $\wedge_L:L^2\ra L$ on a meet-semilattice $L$.
A \textit{join-semilattice} is a poset having all binary suprema.  
Write $\vee_L$ for the binary supremum operation $\vee_L:L^2\ra L$ on a join-semilattice $L$.
A meet-semilattice homomorphism is a function between meet-semilattices preserving binary infima.  
A join-semilattice homomorphism is a function between join-semilattices preserving binary suprema.  

\subsection{Lattices}
A \textit{lattice} is always taken in the order-theoretic sense to mean a poset that is at once a meet-semilattice and join-semilattice.
Henceforth write $\multiboxobj{k}{n}$ for the $n$-fold $\POSETS$-product of $[k]$. 
A lattice is \textit{distributive} if the following holds for each $x,y,z\in L$:
$$x\wedge_L(y\vee_Lz)=(x\wedge_Ly)\vee_L(x\wedge_Lz)$$

\textit{Birkhoff Duality} refers to the categorical equivalence
$$\OP{\POSETS}\simeq \DISLATS$$
naturally sending each finite poset $P$ to $\POSETS(P,[1])$ with lattice operations defined element-wise from $[1]$.
A poset is \textit{Boolean} (and hence a \textit{Boolean lattice}) if it is a distributive lattice whose maximum is the supremum of the immediate successors to its minimum.

\begin{eg}
  The finite Boolean lattices are, up to $\POSETS$-isomorphism,
  $$[0],[1],\boxobj{2},\boxobj{3},\ldots$$
\end{eg}

Every interval in a Boolean lattice is Boolean. 
A function $\phi:P\ra Q$ between the underlying sets of posets \textit{preserves (Boolean) intervals} if images of (Boolean) intervals in $P$ under $\phi$ are (Boolean) intervals in $Q$.
A \textit{lattice homomorphism} is a function $\phi:L\ra M$ between lattices preserving binary suprema and binary infima.  
A \textit{free distributive lattice} is a lattice in the essential image of the left adjoint to the forgetful functor from the category of distributive lattices and lattice homomorphisms to $\SETS$.

\subsection{Groups}
Write $\Sigma_n$ for the symmetric group, the group of bijections on $\{1,2,\ldots,n\}$.  
We will often identify $\Sigma_n$ with the group of coordinate permutations on $[1]^n$.
Under this identification, we regard $\Sigma_m$ as acting on hom-sets $\POSETS(\boxobj{m},\boxobj{n})$ on the right: $\phi g$ denotes the composite of the coordinate permutation $g\in\Sigma_m$ on $\boxobj{m}$ followed by the monotone function $\phi:\boxobj{m}\ra\boxobj{n}$.  

For all $m\leqslant n$, we regard $\Sigma_m$ as the subgroup of $\Sigma_n$ consisting of all permutations that fix the last $n-m$ numbers.  
For all natural numbers $m_1+m_2+\cdots+m_n=m$, we regard $\Sigma_{m_1}\times\cdots\times\Sigma_{m_n}$ as the subgroup of $\Sigma_{m_1+\cdots+m_n}$ consisting of all the permutations which restrict and corestrict to permutations on $\{m_1+\cdots+m_{k-1}+1,\ldots,m_1+\cdots+m_k\}$ for each $1\leqslant k\leqslant n$.  
In this manner, we will implicitly treat $\Sigma_{m_1}\times\cdots\times\Sigma_{m_n}$ as a subgroup of $\Sigma_m$ if $m_1+m_2+\cdots+m_n\leqslant m$.

\addtocontents{toc}{\protect\setcounter{tocdepth}{2}}

\section{Shapes}\label{sec:shapes}
We recall the simplex category in \S\ref{sec:simplices} and common, existing variants of the cube category (cf. \cite{isaacson2011symmetric}) in \S\ref{sec:existing.cube.categories}.  
We then introduce and investigate our proposed variant $\PARSIMONIOUSBOX$ of the cube category in \S\ref{sec:proposed.cube.category}.
We first fix some notation.  
For each small subcategory $\shape{1}$ of $\SETS$, we write $\shape{1}_{\pls}$ and $\shape{1}_{\mins}$ for the wide subcategories of $\shape{1}$ consisting solely of the respectively injective and surjective $\shape{1}$-morphisms.

\subsection{Simplices}\label{sec:simplices}.  
Let $\DEL$ denote the category of non-empty finite ordinals 
$$[0],[1],[2],\ldots$$
and all monotone functions between them.
We write $\oplus$ for the \textit{ordinal sum} \cite{ehlers2008ordinal} bifunctor 
$$\oplus:\DEL\times\DEL\ra\DEL$$
defined on each pair $(\alpha:[a_1]\ra[a_2],\beta:[b_1]\ra[b_2])$ of morphisms by commutative diagrams 
\begin{equation*}
  \begin{tikzcd}
	  {[a_1]\amalg[b_1]}\ar{rrrrrr}[above]{([a_2]\ira[a_2]\amalg[b_2])\alpha\amalg([b_2]\ira[a_2]\amalg[b_2])\beta}\ar{d}[left]{(i\mapsto i)\amalg(i\mapsto a_1+1+i)} & & & & & & {[a_2]\amalg[b_2]}\ar{d}[right]{(i\mapsto i)\amalg(i\mapsto a_2+1+i)}\\
	  {[a_1+b_1+1]}\ar{rrrrrr}[below]{\alpha\oplus\beta} & & & & & & {[a_2+b_2+1]}
  \end{tikzcd}
\end{equation*}
in $\POSETS$.
Write $\DEL_n$ for the full subcategory of $\DEL$ whose objects are $[0],[1],\ldots,[n]$.

\begin{eg}
  The category $\DEL_1$ is the subcategory of $\SETS$ generated by 
  $$\coface_{\mins}:[0]\ra[1],\coface_{\pls}:[0]\ra[1],\codeg:[1]\ra[0].$$

  Iterated ordinal sums of identities with $\coface_{\pm}$ are usually referred to as \textit{cofaces}.
  Iterated ordinal sums of identities with $\codeg$ are usually referred to as \textit{codegeneracies}.
  We will reserve that terminology for monotone functions similarly defined, but with $\SETS$-Cartesian monoidal products playing the role of ordinal sums.  
\end{eg}

\subsection{Cubes}\label{sec:cubes}
All variants of $\BOX$ are monoidal categories whose objects are the lattices 
$$[0],[1],\boxobj{2},\boxobj{3},\ldots$$
such that $[0]$ is the unit of the tensor product.
Most choices of $\BOX$ are monoidal subcategories of the Cartesian monoidal category $\SETS$ (cf. \cite{isaacson2011symmetric}).  
And several choices of $\BOX$ are additionally subcategories of $\POSETS$.

\subsubsection{Existing variants}\label{sec:existing.cube.categories}
For each small category $\shape{1}$ having as its objects $[0],[1],\boxobj{2},\boxobj{3},\ldots$, write $\shape{1}_n$ for the full subcategory of $\shape{1}$ containing $[0],[1],\boxobj{2},\ldots,\boxobj{n}$.  
Let $\DEL_1[\phi_1,\phi_2,\ldots,\phi_n]$ denote the smallest subcategory of $\SETS$ containing $\DEL_1$ and the functions $\phi_1,\phi_2,\ldots,\phi_n$.  
For each subcategory $\shape{1}$ of $\SETS$, we write $\shape{1}^*$ for the minimal submonoidal subcategory of the Cartesian monoidal category $\SETS$ containing $\shape{1}$.  
Define the following functions
\begin{align*}
	\codeg&:[1]\ra[0]\\
	\coface_{\pm}&:[0]\ra[1] & \coface_{\pm}(0)&=\half\pm\half\\
	\transpose &:\boxobj{2}\cong\boxobj{2} & \transpose(x,y)&=(y,x)\\
	\coconnect_{\pls}&:\boxobj{2}\ra[1] & \coconnect_{\pls}(x,y) &=\max(x,y)\\
	\coconnect_{\mins}&:\boxobj{2}\ra[1] & \coconnect_{\mins}(x,y) &=\min(x,y)\\
	\diagonal&:[1]\ra\boxobj{2} & \diagonal(x) &=(x,x)\\
	\reversal &:[1]\cong[1] & \reversal(x) &= 1-x
\end{align*}

For each function $\phi:\boxobj{n_1}\ra\boxobj{n_2}$ and $1\leqslant i\leqslant n$, let $\phi_{i;n}$ denote the function 
$$\phi_{i;n}=\boxobj{i-1}\otimes\phi\otimes\boxobj{n-i}:\boxobj{n+n_1-1}\ra\boxobj{n+n_2-1},$$
where $\otimes$ denotes the Cartesian monoidal tensor on $\SETS$.  
\textit{Cofaces}, \textit{codegeneracies}, \textit{coconnections}, \textit{coreversals}, \textit{diagonals}, and \textit{coordinate transpositions} are functions of the above form.  

\textit{Cofaces} are monotone functions of the form 
$$\coface_{\pm i;n+1}=(\coface_{\pm})_{i;n+1}:\boxobj{n}\ra\boxobj{n+1}.$$

\begin{lem:box-inclusions}
  For each $n$ and interval $I$ in $\boxobj{n}$, there exist unique $m_I$ and composite 
  $$\boxobj{m_I}\ra\boxobj{n}$$
  of cofaces that has image $I$.
\end{lem:box-inclusions}

Cofaces are not enough to define a suitable variant of $\BOX$.

\begin{eg}
  Cofaces generate $(\DEL_{1}^*)_{\pls}=((\DEL_{1})_{\pls})^*$.  
  The $\SETS$-valued presheaves
  $$\OP{(\DEL_{1}^*)_{\pls}}\ra\SETS$$
  over $(\DEL_{1}^*)_{\pls}$ are generally regarded as \textit{precubical/semicubical sets} (eg. \cite{fajstrup2006algebraic}) and not cubical sets (cf. \cite{buchholtz2017varieties}).
\end{eg}

Variants of $\BOX$ should also include \textit{codegeneracies}, monotone functions of the form 
$$\codeg_{i;n+1}:\boxobj{n+1}\ra\boxobj{n}.$$

\begin{eg}
  The codegeneracy $\codeg_{i;n+1}$ is exactly the projection
  $$\codeg_{i;n+1}:\boxobj{n+1}\ra\boxobj{n}$$
  onto all but the $i$th factor.  
  Cogeneracies generate $(\DEL_{1}^*)_{\mins}=((\DEL_{1})_{\mins})^*$.  
\end{eg}

\begin{eg}
  \label{eg:cubes.with.coconnections.of.first.kind}
  The following are equivalent for a small category $\shape{1}$. 
  \begin{enumerate}
	  \item $\shape{1}=\DEL_1^*$
	  \item $\shape{1}$ is the free monoidal category with unit $[0]$ generated by $\DEL_1$
	  \item $\shape{1}$ is the subcategory of $\POSETS$ generated by all cofaces and codegeneracies
  \end{enumerate}
  This category $\BOX$ satisfying any of the above equivalent conditions is the original and minimal variant of $\BOX$ adopted in the literature \cite[Theorem 4.2]{grandis2003cubical}.
  An explicit presentation of $\DEL_1^*$ is given in the literature \cite[\S 4]{grandis2003cubical}.  
  Most variants of $\BOX$ are submonoidal subcategories of the Cartesian monoidal category $\SETS$ (cf. \cite{isaacson2011symmetric}) containing $\DEL_1^*$ as a wide subcategory.  
\end{eg}

\textit{Principal coordinate transpositions} are bijective $\BOX$-morphisms of the form 
$$\transpose_{i;n}:\boxobj{n+2}\cong\boxobj{n+2}.$$

\begin{eg}
  For functions $\phi_1,\phi_2,\ldots,\phi_n$ between $\DEL_1^*$-objects,
  $$\DEL_1[\transpose,\phi_1,\ldots,\phi_n]^*$$
  is the smallest symmetric monoidal subcategory of $\SETS$ containing $\DEL_1$ and $\phi_1,\phi_2,\ldots,\phi_n$.
  In particular, $\DEL_1[\transpose]^*$ is the minimal symmetric monoidal variant of the cube category.
\end{eg}

\begin{lem:box.automorphisms}
  The following are equivalent for a monotone function of the form
  $$\phi:\boxobj{m}\ra\boxobj{n}.$$
  \begin{enumerate}
	    \item\label{item:box.bijection} $\phi$ is bijective
	    \item\label{item:interval.preserving.lattice.isomorphism} $\phi$ is an interval-preserving bijection 
	    \item\label{item:automorphism} $\phi$ is a lattice isomorphism
	    \item\label{item:permutation} $\phi$ is a coordinate permutation
	    \item\label{item:cosymmetry} $\phi$ is composite of principal coordinate transpositions
  \end{enumerate}
\end{lem:box.automorphisms}

The lemma allows us to recognize the automorphism group of $\boxobj{n}$ in every subcategory $\BOX$ of $\POSETS$ containing $\DEL_1[\tau]^*$ as a wide subcategory as the group $\Sigma_n$ of coordinate permutations on $\boxobj{n}$.

\begin{lem:box.epimorphisms}
  The following are equivalent for a lattice homomorphism of the form
  $$\phi:\boxobj{m}\ra\boxobj{n}.$$
  \begin{enumerate}
	    \item\label{item:box.surjection} $\phi$ is surjective
	    \item\label{item:interval.preserving.lattice.isomorphism} $\phi$ is an interval-preserving surjection
	    \item\label{item:automorphism} $\phi$ is a composite of principal coordinate transpositions and codegeneracies
  \end{enumerate}
\end{lem:box.epimorphisms}

\textit{Coconnections} of the first and second kinds are monotone functions of the respective forms $\coconnect_{\pm i;n}=(\coconnect_{\pm})_{i;n}:\boxobj{n+1}\ra\boxobj{n}$.
Coconnections act like extra codegeneracies that give a cube category some of the test categorical properties of $\DEL$.  

\begin{eg}
  The following are equivalent for a small category $\shape{1}$. 
  \begin{enumerate}
	  \item $\shape{1}=\DEL_1[\gamma_{\mins}]^*$
	  \item $\shape{1}$ is the free monoidal category with unit $[0]$ generated by $\DEL_1[\gamma_{\mins}]$
	  \item $\shape{1}$ is the subcategory of $\POSETS$ generated by all cofaces, codegeneracies, and coconnections of the form $\gamma_{\mins i}=(\gamma_{\mins})_i$.
  \end{enumerate}
	This category $\BOX_c$ satisfying any of the above equivalent conditions is one of the most common variants of $\BOX$ studied in the literature and exhibits technical conveniences lacking in the original variant $\DEL_1^*$: cubical groups defined by $\BOX=\BOX_c$ are fibrant in the test model structure \cite[Theorem 2.1]{tonks1992cubical}; a Dold-Kan equivalence between categories of connective chain complexes and cubical Abelian objects holds when $\BOX=\BOX_c$ \cite[Theorem 14.8.1]{huebschmann2012ronald}; and localization by weak equivalences in the test model structure on $\hat\BOX$ preserves finite products when $\BOX=\BOX_c$ \cite[Proposition 4.3]{maltsiniotis2009categorie}.  
\end{eg}

\textit{Diagonals} are monotone functions of the form 
$$\diagonal_{i;n}:\boxobj{n+1}\ra\boxobj{n+2}.$$  

\begin{eg}
  The following are equivalent for a small category $\shape{1}$. 
  \begin{enumerate}
	  \item $\shape{1}=\DEL_1[\tau,\diagonal]^*$
	  \item $\shape{1}$ is the subcategory of $\POSETS$ generated by all cofaces, codegeneracies, principal coordinate transpositions, and diagonals.
  \end{enumerate}
  The category satisfying any of the above equivalent conditions is the \textit{Cartesian cube category}.
  The Cartesian cube category was proposed as a model of cubical sets convenient for synthetic cubical homotopy theory.   
\end{eg}

\begin{lem}
  \label{lem:diagonals}
  Fix a category $\shape{1}$ in a chain of inclusions of subcategories
  $$\DEL_1^*\subset\shape{1}\subset\POSETS$$
  with $\DEL_1^*$ wide in $\shape{1}$.  
  The following are equivalent:
  \begin{enumerate}
    \item $\shape{1}$-morphisms map $1$-dimensional intervals onto intervals
	    \item $\shape{1}$ does not contain the diagonal $[1]\ra[1]^2$
	  \end{enumerate}
	\end{lem}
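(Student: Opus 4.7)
My plan is to establish each implication by contrapositive, exploiting the fact that the diagonal is essentially the sole obstruction to $1$-dimensional intervals being carried onto intervals.

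For the direction (1)$\Rightarrow$(2), the diagonal itself furnishes the witness. Taking the $1$-dimensional interval $[1]\subset[1]$ and applying $\diagonal:[1]\ra[1]^2$ produces the image $\{(0,0),(1,1)\}$, whose smallest enclosing interval in $[1]^2$ is all of $[1]^2$ (which also contains $(0,1)$ and $(1,0)$); thus the image is not an interval, so if $\diagonal\in\shape{1}$ then (1) fails.

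For the direction (2)$\Rightarrow$(1), I would proceed as follows. Suppose some $\shape{1}$-morphism $\phi:\boxobj{m}\ra\boxobj{n}$ sends some $1$-dimensional interval $I\subset\boxobj{m}$ to a non-interval. By the Box-Inclusions Lemma, $I$ is the image of a coface composite $\iota:[1]\ra\boxobj{m}$, which lies in $\DEL_1^*$ and hence in $\shape{1}$ by the wideness hypothesis. Set $a=\phi\iota(0)$ and $b=\phi\iota(1)$; monotonicity (from $\shape{1}\subset\POSETS$) gives $a\leqslant b$ and $\phi(I)=\{a,b\}$. If $a=b$ or $b$ covers $a$ in $\boxobj{n}$, then $\{a,b\}$ is already a $0$- or $1$-dimensional interval. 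Otherwise, $a$ and $b$ differ in at least two coordinates $i_1<i_2$, with $a_{i_j}=0$ and $b_{i_j}=1$ for $j=1,2$. The key step is then to postcompose $\phi\iota$ with the codegeneracy composite in $\DEL_1^*$ that projects $\boxobj{n}$ onto its $i_1$th and $i_2$th coordinates; this yields a $\shape{1}$-morphism $[1]\ra[1]^2$ sending $0\mapsto(0,0)$ and $1\mapsto(1,1)$, which is exactly $\diagonal$. This contradicts (2).

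I expect no substantial obstacle; both directions are essentially direct once one recalls that $1$-dimensional intervals are exactly the images of coface composites out of $[1]$. The only minor subtlety is the observation that codegeneracies in $\DEL_1^*$, being coordinate-omitting projections, suffice to extract any ordered pair of coordinates from $\boxobj{n}$, so the argument requires no coordinate transpositions and therefore goes through for any $\shape{1}$ in the specified chain rather than only for symmetric monoidal variants.
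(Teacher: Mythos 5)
Your proof is correct and follows the same overall strategy as the paper's: the $(1)\Rightarrow(2)$ direction is identical, and both proofs of $(2)\Rightarrow(1)$ precompose with a coface composite $[1]\ra\boxobj{m}$ to reduce to the case $m=1$ and then postcompose with codegeneracies to land in $\boxobj{2}$ and exhibit the diagonal. Where you differ is a small but genuine simplification. The paper first postcomposes $\phi\iota$ with the unique $\DEL_1^*$-retraction of the coface composite onto the minimal interval containing $\{a,b\}$, thereby replacing $\phi\iota$ with an extrema-preserving map $[1]\ra\boxobj{k}$, and only then projects onto any two coordinates. You skip this retraction step: since $a<b$ are not immediate successors they already differ in at least two coordinates $i_1<i_2$, each with $a_{i_j}=0<1=b_{i_j}$ by monotonicity, so directly projecting onto $\{i_1,i_2\}$ by codegeneracies yields $\diagonal$ without any normalization. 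This shaves off an appeal to the uniqueness-of-retractions clause of the Box-Inclusions Lemma. Your closing remark, that only codegeneracies (no transpositions) are needed so the argument applies to non-symmetric $\shape{1}$, is accurate and is also implicitly true of the paper's proof; it is worth flagging since the lemma's hypothesis only demands $\DEL_1^*\subset\shape{1}$.
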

	\begin{proof}
	  Note (1) implies (2) because the diagonal $[1]\ra[1]^2$ does not map $1$-dimensional intervals onto intervals.  

	  Suppose (1) is not true.  
	  Then there exists a $\shape{1}$-morphism $\phi:\boxobj{m}\ra\boxobj{n}$ not mapping a $1$-dimensional interval $I$ onto an interval.  
	  We can choose $\phi$ so that $m=1$ without loss of generality by precomposing $\phi$ with a composite $[1]\ra\boxobj{m}$ of cofaces, a $\shape{1}$-morphism, having image $I$ [\LEMBoxInclusions].
	  There exists a unique composite $\delta$ of cofaces whose image in the image of $\phi$ [\LEMBoxInclusions]
	  We can therefore take $\phi$ to be exrema-preserving by composing $\phi$ with the unique retraction of $\delta$ in $\DEL_1^*$, a $\shape{1}$-morphism.
	  Then $n>1$ by the image of $\phi$ not an interval.  
	  Thus there exists a composite $\pi$ of codegeneracies of the form $\boxobj{n}\ra\boxobj{2}$.  
	  Then $\pi\phi:[1]\ra\boxobj{2}$ is extrema-preserving by $\pi$ and $\phi$ extrema-preserving and hence $\pi\phi$ is the diagonal $[1]\ra\boxobj{2}$. 
	  Thus (2) is not true.
	\end{proof}

\textit{Reversals} are non-monotone bijections of the form 
	$$\reversal_{i;n}:\boxobj{n+1}\cong\boxobj{n+1}.$$

\begin{eg}
  The following are equivalent for a small category $\shape{1}$. 
  \begin{enumerate}
	  \item $\shape{1}=\DEL_1[\tau,\gamma_{\mins},\gamma_{\pls},\diagonal,\reversal]^*$
	  \item $\shape{1}$ is the subcategory of $\SETS$ generated by all cofaces, codegeneracies, coconnections of the first kind, coconnections of the second kind, principal coordinate transpositions, reversals and diagonals.
  \end{enumerate}
  The category satisfying any of the above equivalent conditions is the \textit{de Morgan cube category}.
  The de Morgan cube category was proposed as a model of cubical sets convenient for synthetic cubical homotopy theory.   
\end{eg}

	\begin{lem}
	  \label{lem:reversals}
	  For a small category $\shape{1}$ in a chain of inclusions of subcategories
	  $$\DEL_1^*\subset\shape{1}\subset\SETS.$$
	  with $\DEL_1^*$ wide in $\shape{1}$, the following are equivalent:
	  \begin{enumerate}
	    \item $\shape{1}$-morphisms are monotone functions between the lattices $[0],[1],\boxobj{2},\ldots$
	    \item $\shape{1}$ does not contain the reversal $[1]\cong[1]$
	  \end{enumerate}
	\end{lem}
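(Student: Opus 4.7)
The implication $(1) \Rightarrow (2)$ is immediate, since the reversal $\reversal:[1]\ra[1]$, $x\mapsto 1-x$, is not monotone. For the converse, I plan to prove the contrapositive: if $\shape{1}$ contains some function $\phi:\boxobj{m}\ra\boxobj{n}$ that is not monotone, then by pre- and post-composing with morphisms of $\DEL_1^*\subseteq\shape{1}$, I extract a non-monotone function $[1]\ra[1]$, which can only be $\reversal$.

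First I reduce to a cover relation. A function on a finite poset is monotone iff it preserves cover relations, because comparable elements are connected by chains of covers and $\leqslant$ is transitive. So there exist $x\prec y$ in $\boxobj{m}$ with $\phi(x)\not\leqslant_{\boxobj{n}}\phi(y)$. The pair $\{x,y\}$ is then a one-dimensional interval in $\boxobj{m}$, and by Lemma box-inclusions there is a composite of cofaces $\delta:[1]\ra\boxobj{m}$ with image $\{x,y\}$; as $\delta$ is monotone and injective, $\delta(0)=x$ and $\delta(1)=y$. Since $\delta$ lies in $\DEL_1^*\subseteq\shape{1}$, the composite $\phi\delta:[1]\ra\boxobj{n}$ is a $\shape{1}$-morphism sending $0\mapsto\phi(x)$ and $1\mapsto\phi(y)$.

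Next I extract a single coordinate. The failure $\phi(x)\not\leqslant\phi(y)$ means there is a coordinate $j$ with $\phi(x)_j=1$ and $\phi(y)_j=0$, and the projection $\pi_j:\boxobj{n}\twoheadrightarrow[1]$ onto the $j$-th factor is a composite of codegeneracies, hence a $\DEL_1^*$-morphism and thus a $\shape{1}$-morphism. The composite $\pi_j\phi\delta:[1]\ra[1]$ then lies in $\shape{1}$ and sends $0\mapsto 1$, $1\mapsto 0$; among the four functions $[1]\ra[1]$ this is precisely $\reversal$, so $\reversal\in\shape{1}$, contradicting $(2)$.

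I do not anticipate any real obstacle: the whole argument is a two-step squeeze that uses only cofaces on the domain and codegeneracies on the codomain, both supplied by the wide subcategory $\DEL_1^*$. The only place requiring a named lemma is the identification of the coface composite onto the interval $\{x,y\}$, for which Lemma box-inclusions is directly applicable.
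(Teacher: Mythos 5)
Your proof is correct and follows essentially the same route as the paper's: reduce to a cover pair $x\prec y$ by noting covers generate the order, precompose with the coface composite $[1]\ra\boxobj{m}$ onto $\{x,y\}$ supplied by the box-inclusions lemma, postcompose with the codegeneracy composite $\boxobj{n}\epi[1]$ onto the offending coordinate, and conclude the resulting non-monotone endomorphism of $[1]$ is $\reversal$. The only difference is cosmetic: you spell out why $\delta(0)=x$, $\delta(1)=y$ and why cover-preservation suffices, which the paper leaves implicit.
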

	\begin{proof}
	  Note (1) implies (2) because $\reversal:[1]\ra[1]$ is not monotone.  

	  Suppose (1) is not true.  
	  Then there exists a $\shape{1}$-morphism $\phi:\boxobj{m}\ra\boxobj{n}$ that is not monotone.  
	  Therefore there exist $x\leqslant_{\boxobj{m}}y$ such that $\phi(x)\nleqslant_{\boxobj{n}}\phi(y)$.  
	  We can take $y$ to be an immediate successor to $x$ in $\boxobj{m}$ because arrows between an element and its immediate successor generate $\boxobj{m}$ as a category.
	  Then $\{x,y\}$ is an interval in $\boxobj{m}$.  
	  We can thus choose $\phi$ so that $m=1$ without loss of generality by precomposing $\phi$ with a composite $[1]\ra\boxobj{m}$ of cofaces, a $\shape{1}$-morphism, having image $\{x,y\}$ [\LEMBoxInclusions].

	  There exists $1\leqslant i\leqslant n$ such that $\phi(x)_i=1$ and $\phi(y)_i=0$.  
	  Therefore we can take $n=1$ by composing $\phi$ with projection $\boxobj{n}\ra[1]$ onto the $i$th factor, a composite of codegeneracies and hence a $\shape{1}$-morphism.  
	  Then $\phi:[1]\ra[1]$ is not monotone and hence $\phi$ is the reversal $\reversal:[1]\cong[1]$.
	  Thus (2) is not true.
\end{proof}

\begin{eg}
  The following are equivalent for a small category $\shape{1}$. 
  \begin{enumerate}
    \item $\shape{1}=\DEL_1[\tau,\gamma_{\mins},\gamma_{\pls},\diagonal]^*$
    \item $\shape{1}$ is the subcategory of $\POSETS$ generated by all cofaces, codegeneracies, coconnections of the first kind, coconnections of the second kind, principal coordinate transpositions, and diagonals
    \item $\shape{1}$ is the full subcategory of $\POSETS$ whose objects are $[0],[1],\boxobj{2},\ldots$
     \item $\shape{1}$ is the maximal subcategory of $\SETS$ containing $\DEL_1^*$ as a wide subcategory and excluding $\reversal$
  \end{enumerate}
  The category $\shape{1}$ satisfying the above has been sometimes referred to in the recent literature \cite{cavallo2022relative,sattler2018idempotent} as the \textit{Dedekind cube category} because it was the first variant of $\BOX$ considered in which the values of the endomorphism operad of $[1]$ have the Dedekind numbers as their cardinalities.
  Birkhoff Duality restricts and corestricts to a categorical equivalence between $\OP{(\DEL_1[\tau,\gamma_{\mins},\gamma_{\pls},\diagonal]^*)}$ and the full subcategory of $\DISLATS$ consisting of the free and finite distributive lattices.
\end{eg}

\subsubsection{Proposed variant}\label{sec:proposed.cube.category}
Let $\PARSIMONIOUSBOX$ denote a novel variant of $\BOX$, the symmetric monoidal subcategory of the Cartesian monoidal category $\POSETS$ whose objects are the posets $[0],[1],\boxobj{2},\boxobj{3},\ldots$ and whose morphisms are all \textit{interval-preserving} monotone functions between them.

\begin{eg}
  For each $n$, we have the identities
	$$\PARSIMONIOUSBOX(\boxobj{n},[1])=\DEL_1[\transpose,\coconnect_{\mins},\coconnect_{\pls},\diagonal]^*(\boxobj{n},[1])=\POSETS(\boxobj{n},[1]).$$
\end{eg}

The variant $\PARSIMONIOUSBOX$ of $\BOX$ excludes diagonals and reversals and contains but is not generated by all cofaces, codegeneracies, coconnections of both kinds, and principal coordinate transpositions.    
Note that $\PARSIMONIOUSBOX_2=\DEL_1[\transpose,\coconnect_{\mins},\coconnect_{\pls}]$.
Moreover, only $\diagonal$ among the generators $\transpose,\coconnect_{\mins},\coconnect_{\pls},\diagonal$ of the full subcategory $\DEL_1[\transpose,\coconnect_{\mins},\coconnect_{\pls},\diagonal]^*$ of $\POSETS$ fails to define a $\PARSIMONIOUSBOX$-morphism.
Nonetheless, $\PARSIMONIOUSBOX\neq\DEL_1[\transpose,\coconnect_{\mins},\coconnect_{\pls}]^*$. 
 
\begin{eg}
  The following is a $\PARSIMONIOUSBOX$-morphism but not a $(\PARSIMONIOUSBOX_2)^*$-morphism:
  $$(x,y,z)\mapsto (x\wedge_{[1]}y)\vee(x\wedge_{[1]}z)\vee(y\wedge_{[1]}z):\boxobj{3}\ra[1].$$
\end{eg}

An analogue of \LEMBoxEpimorphisms{} does not hold for surjective $\PARSIMONIOUSBOX$-morphisms.

\begin{eg}
  The monotone surjection $\phi:\boxobj{3}\ra\boxobj{2}$ defined by
  $$\phi(x,y,\half\pm\half)=(\coconnect_{\pm}(x,y),\half\pm\half)$$
  does not preserves intervals.  
\end{eg}

We give a couple of characterizations of $\PARSIMONIOUSBOX$.  
The starting point is the following lifting property that $\PARSIMONIOUSBOX$ shares with most if not all variants of $\BOX$ in the literature.

\begin{lem}
  \label{lem:split.generalized.coconnections}
  Consider the solid diagram in $\PARSIMONIOUSBOX$ of the form
  \begin{equation*}
    \begin{tikzcd}
      & \boxobj{m}\ar[d,two heads]\\
      {[1]}\ar[ur,dotted]\ar[r,hookrightarrow] & \boxobj{n}
    \end{tikzcd}
  \end{equation*}
  with the right vertical arrow surjective and the bottom horizontal arrow injective.  
  Then there exists a dotted $\PARSIMONIOUSBOX$-morphism making the entire diagram commute.  
\end{lem}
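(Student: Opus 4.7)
The plan is to reformulate the conclusion concretely: the injective $\PARSIMONIOUSBOX$-morphism along the bottom has 2-element image, and by interval preservation this image is a 1-dimensional interval $\{a, b\}$ in $\boxobj{n}$ with $b$ covering $a$, so producing the dotted lift amounts to exhibiting a cover relation $v \prec \bar y$ in $\boxobj{m}$ with $p(v) = a$ and $p(\bar y) = b$, where $p$ denotes the right vertical arrow. The induced monotone function $[1] \to \boxobj{m}$ sending $0 \mapsto v$ and $1 \mapsto \bar y$ then maps the only non-singleton interval $\{0, 1\}$ of $[1]$ onto $\{v, \bar y\}$, which is an interval in $\boxobj{m}$ because $\bar y$ covers $v$, and hence is a $\PARSIMONIOUSBOX$-morphism making the diagram commute.

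First I would record two easy structural consequences of interval preservation. Since $\boxobj{n}$ equals an interval in itself only when its endpoints are its minimum and maximum, the equality $\boxobj{n} = p([\bot, \top]_{\boxobj{m}}) = [p(\bot), p(\top)]_{\boxobj{n}}$ coming from surjectivity forces $p$ to send minimum to minimum and maximum to maximum. More generally, interval preservation yields $p([x, y]_{\boxobj{m}}) = [p(x), p(y)]_{\boxobj{n}}$ for each $x \leqslant y$ in $\boxobj{m}$; consequently each cover $x \prec y$ in $\boxobj{m}$ maps to a two-element interval in $\boxobj{n}$, so $p(y)$ either equals $p(x)$ or covers it.

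Next I would construct the witness edge. Pick any $u \in p^{-1}(b)$ using surjectivity; since $p([\bot, u]_{\boxobj{m}}) = [\bot_{\boxobj{n}}, b]_{\boxobj{n}}$ contains $a$, some $x_0 \leqslant u$ satisfies $p(x_0) = a$. Let $\bar y$ be a minimal element of the non-empty subposet $\{y \in \boxobj{m} : y \geqslant x_0, \; p(y) = b\}$. Because $p(\bar y) \neq p(x_0)$, we have $\bar y > x_0$, so some coordinate $j$ satisfies $\bar y_j = 1$ and $(x_0)_j = 0$; letting $v$ denote the result of flipping the $j$-th coordinate of $\bar y$ from $1$ to $0$ yields an immediate predecessor of $\bar y$ in $\boxobj{m}$ with $v \geqslant x_0$.

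The main obstacle is verifying that $p(v) = a$. Minimality of $\bar y$ rules out $p(v) = b$, interval preservation applied to $\{v, \bar y\}$ forces $p(v)$ to be a predecessor of $b$ in $\boxobj{n}$, and monotonicity gives $p(v) \geqslant p(x_0) = a$. The key elementary observation is that $a$ is the unique predecessor of $b$ lying in the filter $[a, \top_{\boxobj{n}}]_{\boxobj{n}}$: if $c$ denotes the unique coordinate in which $a$ and $b$ differ, every other predecessor of $b$ is obtained by flipping some coordinate $j \neq c$ from $1$ to $0$, and since $b_j = 1$ forces $a_j = 1$, the result has $j$-th coordinate $0$ and hence fails to dominate $a$. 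Therefore $p(v) = a$, and $0 \mapsto v$, $1 \mapsto \bar y$ is the desired lift.
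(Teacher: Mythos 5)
Your proof is correct. Both you and the paper produce the lift by exhibiting a covering pair in $\boxobj{m}$ that maps onto the two-element interval $\{a,b\}$ in $\boxobj{n}$, but you establish the covering relation differently. The paper chooses $\delta^*(0)$ to be a \emph{maximal} element of $p^{-1}(a)$ and $\delta^*(1)$ to be a minimal element of $[\delta^*(0),\top]\cap p^{-1}(b)$; the covering relation then falls out immediately, because any intermediate $z$ would have $p(z)\in\{a,b\}$ (interval preservation), contradicting maximality or minimality. This argument never appeals to the Boolean coordinate structure beyond the interval-preservation axiom itself. You instead take an \emph{arbitrary} $x_0\in p^{-1}(a)$ below a preimage of $b$, a minimal $\bar y$ in the filter $[x_0,\top]\cap p^{-1}(b)$, and explicitly flip one coordinate of $\bar y$ to produce $v$; you then need the extra Boolean-specific observation that $a$ is the unique immediate predecessor of $b$ dominating $a$ to conclude $p(v)=a$. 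Your construction is more concrete (it names a witness coordinate), but the paper's choice of maximal preimage makes that final uniqueness lemma unnecessary and keeps the argument purely order-theoretic.
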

\begin{proof}
  Let $\phi$ be the right vertical arrow. 
  Let $\delta$ be the bottom horizontal arrow.  
  
  There exists an element $\delta^*(0)$ in the preimage of $\delta(0)$ under $\phi$ that is maximal in $\boxobj{m}$ among all such elements by $\boxobj{m}$ finite and $\phi$ surjective.  
  Let $I$ be the interval in $\boxobj{m}$ with $\min\,I=\delta^*(0)$ and $\max\,I=\max\,\boxobj{m}$.  
  Then $\phi(I)$ contains $\delta^*(0)$ as its minimum and $\max\,\boxobj{n}$ as its maximum because monotone surjections are extrema-preserving.  
  And $\phi(I)$ is an interval in $\boxobj{n}$ by $\phi$ interval-preserving.  

  Therefore $\phi(I)$ contains $\delta(1)$.  
  Therefore there exists an element $\delta^*(1)$ in $I$ in the preimage of $\delta(1)$ under $\phi$ that is minimal in $I$ among all such elements by $I$ finite and $\delta(1)\in\phi(I)$.  
  Then $\delta^*(1)$ is an immediate successor to $\delta^*(0)$ by maximality of $\delta^*(0)$ and minimality of $\delta^*(1)$.
  Hence the function $\delta^*:[0]\ra\boxobj{m}$ is monotone by $\delta^*(0)=\min\,I\leqslant\delta^*(1)$ by $\delta^*(1)\in I$ and interval-preserving by $\delta^*(1)$ an immediate successor to $\delta^*(0)$.  
  Hence $\delta^*$ defines the desired dotted $\PARSIMONIOUSBOX$-morphism. 
\end{proof}

The following splitting gives symmetric monoidal generators for $\PARSIMONIOUSBOX$. 

\begin{lem}
  \label{lem:monotone.surjections}
  For each solid surjective $\PARSIMONIOUSBOX$-morphism as given in the diagram
  \begin{equation*}
    \begin{tikzcd}
	    \boxobj{m+n+1}\ar[rr,two heads]\ar[dotted]{dr}[description]{\cong} & & \boxobj{n+1}\\
	    & \boxobj{m+n+1}\ar[dotted]{ur}[description]{\phi_1\otimes\phi_2\cdots\phi_{n+1}}
    \end{tikzcd}
  \end{equation*}
  there exist monotone functions $\phi_1,\phi_2,\ldots,\phi_{k+1}$ from $\PARSIMONIOUSBOX$-objects to $[1]$ and dotted left diagonal coordinate permutation making the entire diagram commute. 
\end{lem}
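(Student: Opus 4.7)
The plan is to assign each input coordinate $k\in\{1,\ldots,m+n+1\}$ to a unique output coordinate $\iota(k)\in\{1,\ldots,n+1\}$ that it influences, partition the input coordinates by $\iota$, and then build the permutation and the factors $\phi_i$ by regrouping input coordinates sharing the same image under $\iota$.

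First, interval-preservation applied to each $1$-dimensional sub-interval of the form $[x,x+e_k]$ (for $x_k=0$) forces the difference $\psi(x+e_k)-\psi(x)$ to be either $0$ or a standard basis vector $e_l\in\boxobj{n+1}$.  I write $\eta(x,k)\in\{*,1,\ldots,n+1\}$ for this direction.  The heart of the proof is the claim that, for each fixed $k$, the function $\eta(\cdot,k)$ takes at most one non-$*$ value across all $x$ with $x_k=0$.  Granting the claim, $\iota(k)$ is defined to be this unique non-$*$ value when one exists and any fixed output coordinate otherwise.

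To prove the claim, suppose instead $\eta(x_0,k)=i$ and $\eta(y_0,k)=j$ with $i\neq j$.  Let $u$ be $x_0\wedge y_0$ with $k$-th coordinate reset to $0$, and let $v$ be $x_0\vee y_0$ with $k$-th coordinate reset to $1$, where the meet and join are computed in $\boxobj{m+n+1}$.  Monotonicity gives $\psi_i(u)=\psi_j(u)=0$ and $\psi_i(v)=\psi_j(v)=1$, so by interval-preservation $\psi([u,v])=[\psi(u),\psi(v)]$ has its $i$-th and $j$-th components ranging over all of $[1]$.  Composing $\psi|_{[u,v]}$ with the projection $\boxobj{n+1}\twoheadrightarrow\boxobj{2}$ onto coordinates $i,j$ therefore produces a surjective $\PARSIMONIOUSBOX$-morphism $\varphi\colon[u,v]\to\boxobj{2}$ for which $k$ is an essential coordinate of both output components of $\varphi$.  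This reduces the claim to the case of codomain $\boxobj{2}$: no surjective $\PARSIMONIOUSBOX$-morphism to $\boxobj{2}$ has a coordinate essential in both output components.  Selecting minimal elements $p\in\varphi_1^{-1}(1)$ and $q\in\varphi_2^{-1}(1)$ with $p_k=q_k=1$ via \LEMBoxInclusions{}, the sub-interval $[(p\wedge q)-e_k,\,p\vee q]$ has its lower corner forced by the minimality of $p$ and $q$ to lie in $\varphi^{-1}(0,0)$ and its upper corner in $\varphi^{-1}(1,1)$; a direct check using interval-preservation on this sub-interval and monotonicity on its edges pins down $\varphi$ on the remaining vertices and leaves one of the four elements of $\boxobj{2}$ with empty preimage, contradicting the fact that $\varphi$ restricted to the sub-interval surjects onto $\boxobj{2}$.

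Granting the claim, set $C_i=\iota^{-1}(i)$ and $m_i=|C_i|$, so $m_1+\cdots+m_{n+1}=m+n+1$.  By definition of $\iota$, toggling any input coordinate outside $C_i$ never changes $\psi_i$, so $\psi_i$ factors through the projection onto $\boxobj{m_i}$, yielding a $\PARSIMONIOUSBOX$-morphism $\phi_i\colon\boxobj{m_i}\to[1]$.  Any coordinate permutation $\sigma$ of $\boxobj{m+n+1}$ sending the first $m_1$ coordinates onto $C_1$, the next $m_2$ onto $C_2$, and so on, then satisfies $\psi\sigma=\phi_1\otimes\phi_2\otimes\cdots\otimes\phi_{n+1}$, as required.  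The main obstacle is the central claim: local compatibility of $\eta$ on $2$-faces alone does not force it to be single-valued, and the argument must pass through a sub-interval whose dimension can exceed two and invoke surjectivity globally to complete the contradiction.
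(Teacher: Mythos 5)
Your setup (assigning each input coordinate an output coordinate it can influence, proving at most one output is possible, then regrouping) mirrors the paper's strategy, and your reduction to the codomain-$\boxobj{2}$ case by restricting $\psi$ to $[u,v]$ and post-composing with a codegeneracy is a nice conceptual rephrasing of what the paper does working directly with $\phi$ on $[1]\otimes J$. The argument breaks, however, at the last step, which is exactly where all the work lives.

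First, minimality of $p$ in $\varphi_1^{-1}(1)\cap\{z:z_k=1\}$ does \emph{not} force $\varphi_1\bigl((p\wedge q)-e_k\bigr)=0$: the element $(p\wedge q)-e_k$ has $k$-th coordinate $0$ and hence lies outside the set over which $p$ was minimized, so nothing prevents $\varphi_1(p-e_k)=1$. (The fix is to take $p=x_0+e_k$ for a minimal witness $x_0$ of essentiality with $\varphi_1(x_0)=0$, $\varphi_1(x_0+e_k)=1$; then $\varphi_1(p-e_k)=0$ propagates down.) Second, and more seriously, the ``direct check'' is not an argument. The sub-interval $L=[(p\wedge q)-e_k,p\vee q]$ has dimension $1+|\{i\neq k:p_i\neq q_i\}|$, which can be arbitrarily large, and monotonicity plus interval-preservation do not pin down $\varphi$ on most of its $2^{\dim L}$ vertices; the claimed ``leaves one of the four elements of $\boxobj{2}$ with empty preimage'' is asserted, not shown. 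Compare the paper's proof: it minimizes the \emph{Hamming distance} $k$ between the two witnesses $x,y$ (not the witnesses themselves in the poset order), which supplies an inductive-style lever. With that minimization, every intermediate cross-section $[1]\otimes\{x'\}$ for $x'\in J\setminus\{x,y\}$ is \emph{forced} by minimality to be collapsed by $\phi$, which is precisely what lets one list the image of $[1]\otimes J$ and observe $(0,1)$ is missing. Your poset-minimality of $p$ and $q$ gives no analogous control over the interior of $L$, so the contradiction does not follow without a further idea (e.g.\ replacing poset-minimality of $p,q$ with minimality of $\dim L$, which recovers the paper's argument). Also, the citation of \LEMBoxInclusions{} for ``selecting minimal elements'' is misplaced; that lemma concerns images of coface composites, and choosing a minimal element of a nonempty finite subset of a poset needs no such appeal.
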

\begin{proof}
  It suffices to take $n>0$.
  Define projection $\sigma_{(s_1,\ldots,s_k);j}:\boxobj{j}\ra\boxobj{k}$ by 
  $$\sigma_{(s_1,\ldots,s_k);j}(x_1,\ldots,x_{j})=(x_{s_1},x_{s_2},\ldots,x_{s_k}),$$
  for each $j$ and subsequence $(s_1,\ldots,s_k)$ of $(1,2,\ldots,j)$. 
  Let $\phi$ denote the top horizontal arrow. 
  
  For each $1\leqslant i\leqslant n+1$, there exists a unique minimal subsequence $s(i)$ of $(1,2,\ldots,m+1)$, non-empty by $\phi$ surjective, and monotone function $\phi_i$ from a $\PARSIMONIOUSBOX$-object to $[1]$, unique by minimality, such that $\sigma_{(i);n+1}\phi=\phi_i\sigma_{s(i);m+n+1}$.
  
  It suffices to show that for distinct $i$ and $j$, $s(i)$ and $s(j)$ have no common integer. 
  Hence it suffices to take the case $n=1$ and show that $s(1)$ and $s(2)$ have no common integer.

  Suppose, to the contrary, that $s(1)$ and $s(2)$ have a common integer.
  We can take that common integer to be $1$ without loss of generality by composing $\phi$ with a coordinate transposition if necessary.  
	Then there exist $x,y\in\boxobj{m+1}$ such that $\sigma_{2;2}\phi([1]\otimes\{x\})=[1]$ and $\sigma_{1;2}\phi([1]\otimes\{y\})=[1]$. 
  We can make our choices so that the number of coordinates in which $x$ and $y$ differ is the minimum possible natural number $k$.   
  Let $K$ be the smallest Boolean interval in $\boxobj{m+1}$ containing $x$ and $y$; thus the dimension of $K$ is $k$.   
	There exist $\bar{x},\bar{y}\in[1]$ such that $\phi([1]\otimes\{x\})=[1]\otimes\{\bar{x}\}$ and $\phi([1]\otimes\{y\})=\{\bar{y}\}\otimes[1]$ by $\phi$ interval-preserving. 
  
  Consider the case $k=0$, 
  Then $[1]\otimes\{\bar{x}\}=\{\bar{y}\}\otimes[1]$, a contradiction. 

  Consider the case $k=1$.  
  Then $[1]\otimes\{x,y\}$ would be a $2$-dimensional Boolean interval whose image under $\phi$ is $([1]\otimes\{\bar{x}\})\cup(\{\bar{y}\}\otimes[1])$, not a Boolean interval in $\boxobj{2}$, a contradiction.  

  Consider the case $k\geqslant 2$ and $\bar{x}=0$.   
  Let $J$ be the smallest interval in $\boxobj{m+1}$ containing $x$ and $\max\,K$ as its extrema.
  Then $\phi([1]\otimes J)$ contains both $\phi(0,x)=(0,\bar{x})=(0,0)$ and $\phi(1,\max\,J)=\phi(\max\,\boxobj{m+n+1})=(1,1)$ by $\phi$ a monotone surjection and hence extrema-preserving.  
  Therefore the restriction of $\phi$ to $[1]\otimes J$ is surjective by interval-preservation.
  For each $x'\in J\setminus\{x\}$, $\phi([1]\otimes\{x'\})$ is a singleton because otherwise we could have replaced $x$ or $y$ with $x'$, contradicting the minimality of $k$.  
  Then for each $x'\in J\setminus\{x\}$, $\phi([1]\otimes x')$ must be a singleton consisting of a point in $\boxobj{2}$ whose first coordinate is $1$ by monotonicity, contradicting surjectivity.  
  
  Dually we get a contradiction for the case  $k\geqslant 2$ and $\bar{x}=1$.  
\end{proof}

\begin{eg}
  \label{eg:3.2.surjection}
  For each solid surjective $\PARSIMONIOUSBOX$-morphism as given in the diagram
  \begin{equation*}
    \begin{tikzcd}
	    \boxobj{3}\ar[rr,two heads]\ar[dotted]{dr}[description]{\cong} & & \boxobj{2}\\
	    & \boxobj{3}\ar[ur,dotted]
    \end{tikzcd}
  \end{equation*}
  there exists a left diagonal coordinate permutation and right diagonal codegeneracy or coconnection making the entire diagram commute.
\end{eg}

A simple consequence is that the surjective $\PARSIMONIOUSBOX$-morphisms are the split epis in $\PARSIMONIOUSBOX$.  

\begin{lem}
  \label{lem:split.epis}
  Every surjective $\PARSIMONIOUSBOX$-morphism splits in $\PARSIMONIOUSBOX$.
\end{lem}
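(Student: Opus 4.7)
The plan is to reduce a splitting of an arbitrary surjective $\PARSIMONIOUSBOX$-morphism $\phi\colon\boxobj{m}\twoheadrightarrow\boxobj{n}$ to a family of splittings of component maps valued in $[1]$, by invoking the factorization of Lemma \ref{lem:monotone.surjections} and then the lifting property of Lemma \ref{lem:split.generalized.coconnections}. The case $n=0$ is immediate since $[0]$ is terminal, so I would assume $n\geqslant 1$ and write the codomain in the form $\boxobj{n+1}$ to match the notation of Lemma \ref{lem:monotone.surjections}.

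First I would apply Lemma \ref{lem:monotone.surjections} to $\phi\colon\boxobj{m}\twoheadrightarrow\boxobj{n+1}$ to obtain a coordinate permutation $\sigma\colon\boxobj{m}\cong\boxobj{m}$ together with $\PARSIMONIOUSBOX$-morphisms $\phi_i\colon\boxobj{k_i}\ra[1]$ for $1\leqslant i\leqslant n+1$ such that
\begin{equation*}
\phi\sigma=\phi_1\otimes\phi_2\otimes\cdots\otimes\phi_{n+1}.
\end{equation*}
Projection onto the $i$th factor of $\boxobj{n+1}$ sends $\phi$ to a surjection; from the displayed equation, that projection factors through $\phi_i$, so each $\phi_i$ is a surjective $\PARSIMONIOUSBOX$-morphism. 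Next I would apply Lemma \ref{lem:split.generalized.coconnections} to each $\phi_i$ with the injection $[1]\hookrightarrow[1]$ taken to be $\id_{[1]}$; this yields a $\PARSIMONIOUSBOX$-morphism $s_i\colon[1]\ra\boxobj{k_i}$ satisfying $\phi_i s_i=\id_{[1]}$.

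Finally, I would set $s=\sigma\circ(s_1\otimes s_2\otimes\cdots\otimes s_{n+1})\colon\boxobj{n+1}\ra\boxobj{m}$. Since $\sigma$ is a $\PARSIMONIOUSBOX$-automorphism and $\PARSIMONIOUSBOX$ is closed under the symmetric monoidal tensor inherited from $\POSETS$, $s$ is a $\PARSIMONIOUSBOX$-morphism. The computation
\begin{equation*}
\phi s=(\phi\sigma)(s_1\otimes\cdots\otimes s_{n+1})=(\phi_1 s_1)\otimes\cdots\otimes(\phi_{n+1}s_{n+1})=\id_{\boxobj{n+1}}
\end{equation*}
then exhibits $s$ as a section of $\phi$. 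The argument has no serious obstacle of its own: all of the work has been pushed into Lemmas \ref{lem:monotone.surjections} and \ref{lem:split.generalized.coconnections}, whose proofs crucially use interval-preservation to rule out diagonal-like behavior and to convert a surjection onto $[1]$ into an explicit successor pair, respectively.
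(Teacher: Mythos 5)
Your proposal is correct and follows essentially the same route as the paper's own (very terse) proof: handle $n=0$ by terminality, then for $n\geqslant 1$ decompose $\phi$ up to coordinate permutation into a tensor product of monotone surjections onto $[1]$ via Lemma \ref{lem:monotone.surjections}, and split each tensor factor via Lemma \ref{lem:split.generalized.coconnections} applied with the bottom injection $\id_{[1]}$. You merely spell out the bookkeeping (surjectivity of each $\phi_i$ and the composition $\phi s=\id$) that the paper leaves implicit in the phrase ``a tensor product of split epis.''
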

\begin{proof}
  Consider a surjective  $\PARSIMONIOUSBOX$-morphism $\phi:\boxobj{m}\ra\boxobj{n}$.  
  In the case $n=0$, then $\phi$ admits as a section any function $[0]\ra\boxobj{m}$, an interval-preserving monotone function, and hence $\phi$ is split. 
  In the case $n\geqslant 1$, $\phi$ is a tensor product of split epis [Lemmas \ref{lem:split.generalized.coconnections} and \ref{lem:monotone.surjections}] and hence is split.  
 \end{proof}

Recall that a \textit{generalized Reedy category} is a small category $\shape{1}$ equipped with a pair of wide subcategories $\shape{1}^{\mins}$ and $\shape{1}^{\pls}$ and function $\mathrm{deg}$ from the object set of $\shape{1}$ to the set $\N$ of natural numbers such that the following all hold:
\begin{enumerate}
  \item \ldots $\shape{1}^{\mins}\cap\shape{1}^{\pls}$ is the core of $\shape{1}$
  \item \ldots for each $\shape{1}^{\mins}$-morphism $o_1\ra o_2$, $\deg(o_1)\geqslant\deg(o_2)$
  \item \ldots for each $\shape{1}^{\pls}$-morphism $o_1\ra o_2$, $\deg(o_1)\leqslant\deg(o_2)$
  \item \ldots a morphism $o_1\ra o_2$ in $\shape{1}^{\mins}$ or $\shape{1}^{\pls}$ is in both if $\deg(o_1)=\deg(o_2)$ 
  \item \ldots for each $\shape{1}_{\mins}$-morphism $\zeta$, a $\shape{1}$-isomorphism $\gamma$ is the identity if $\gamma\zeta=\zeta$.
  \item \ldots each $\shape{1}$-morphism factors as a $\shape{1}_{\mins}$-morphism followed by a $\shape{1}_{\pls}$-morphism uniquely up to $\shape{1}$-isomorphism
\end{enumerate}
\vspace{.1in}

In practice, the generalized Reedy categories in this paper are small subcategories $\shape{1}$ of $\SETS$ for which $\shape{1}^{\pm}=\shape{1}_{\pm}$. 
We henceforth regard $\PARSIMONIOUSBOX$ as a generalized Reedy category by the following proposition.

\begin{prop}
  \label{prop:generalized.reedy}
  The category $\PARSIMONIOUSBOX$ is a generalized Reedy such that $$\PARSIMONIOUSBOX^{\mins}=\PARSIMONIOUSBOX_{\mins},\quad\PARSIMONIOUSBOX^{\pls}=\PARSIMONIOUSBOX_{\pls},\quad\mathrm{deg}\,\boxobj{n}=n,\,n=0,1,2,\ldots.$$ 
\end{prop}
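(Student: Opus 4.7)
The plan is to verify the six axioms of a generalized Reedy category in turn, with $\PARSIMONIOUSBOX^{\pm}=\PARSIMONIOUSBOX_{\pm}$ and $\deg\boxobj{n}=n$. Most of the axioms reduce to routine observations about Boolean lattices together with the cited lemmas \LEMBoxAutomorphisms{} and \LEMBoxEpimorphisms{}; the real content lies in the unique epi-mono factorization.

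First, the degree conditions. A surjection $\boxobj{m}\twoheadrightarrow\boxobj{n}$ between finite Boolean lattices forces $2^m\geqslant 2^n$ and hence $m\geqslant n$, while an injection forces $m\leqslant n$; this gives the contravariant behavior of degree on $\PARSIMONIOUSBOX_{\mins}$ and its covariant behavior on $\PARSIMONIOUSBOX_{\pls}$. The intersection axiom follows because a $\PARSIMONIOUSBOX$-morphism that is both injective and surjective is a monotone bijection between finite Boolean lattices, hence a coordinate permutation by \LEMBoxAutomorphisms{}, i.e., an isomorphism. The axiom that $\shape{1}_{\mins}$- and $\shape{1}_{\pls}$-morphisms between objects of equal degree coincide is then immediate, since an injective or surjective self-map of a finite set is a bijection. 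The axiom that an isomorphism $\gamma:\boxobj{n}\cong\boxobj{n}$ satisfying $\gamma\zeta=\zeta$ for some surjective $\zeta$ must equal $\id$ is also immediate, since $\gamma$ fixes every element of $\boxobj{n}$ by surjectivity.

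The main obstacle is the existence and uniqueness (up to isomorphism) of epi-mono factorizations inside $\PARSIMONIOUSBOX$. For existence, given a $\PARSIMONIOUSBOX$-morphism $\phi:\boxobj{m}\ra\boxobj{n}$, observe that $\boxobj{m}$ is itself a Boolean interval in $\boxobj{m}$, so $\phi(\boxobj{m})$ is an interval in $\boxobj{n}$ by definition of $\PARSIMONIOUSBOX$. Since every interval in a Boolean lattice is Boolean (as recalled in \S\ref{sec:conventions}), we obtain a lattice isomorphism $\boxobj{k}\cong\phi(\boxobj{m})$ for a unique $k$. The inclusion $\phi(\boxobj{m})\hookrightarrow\boxobj{n}$ is interval-preserving because intervals of $\boxobj{n}$ that happen to lie inside the sub-Boolean-interval $\phi(\boxobj{m})$ are already intervals in $\phi(\boxobj{m})$; precomposing with the chosen isomorphism yields an injective $\PARSIMONIOUSBOX$-morphism $\boxobj{k}\hookrightarrow\boxobj{n}$. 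The corestriction $\boxobj{m}\twoheadrightarrow\boxobj{k}$ is monotone and surjective, and it is interval-preserving because for any interval $I\subseteq\boxobj{m}$ the image $\phi(I)$ is an interval in $\boxobj{n}$ contained in $\phi(\boxobj{m})$ and hence an interval in $\boxobj{k}$.

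For uniqueness up to isomorphism, suppose $\phi$ factors as $\boxobj{m}\xra{\zeta}\boxobj{k}\xra{\delta}\boxobj{n}$ with $\zeta$ surjective and $\delta$ injective. Then $\delta(\boxobj{k})=\phi(\boxobj{m})$ as subsets of $\boxobj{n}$, so any two such factorizations share the same image, and the comparison between their middle objects is a monotone bijection of Boolean lattices of the same rank, hence an isomorphism by \LEMBoxAutomorphisms{}. Compatibility with $\zeta$ and $\delta$ on both sides then pins this isomorphism down uniquely. This establishes all six axioms and realizes $\PARSIMONIOUSBOX$ as a generalized Reedy category with the asserted data.
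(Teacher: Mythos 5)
Your proof is correct and follows essentially the same strategy as the paper's: verify the degree conditions by counting, identify $\PARSIMONIOUSBOX_{\mins}\cap\PARSIMONIOUSBOX_{\pls}$ with the coordinate permutations via \LEMBoxAutomorphisms{}, and produce the Reedy factorization by corestricting onto the image (a Boolean interval) and including that interval back into $\boxobj{n}$. If anything you are more thorough than the paper's own write-up, which leaves implicit both that the corestriction onto the image is interval-preserving and that the factorization is unique up to isomorphism, and which appeals to Lemma~\ref{lem:split.epis} for the isomorphism-rigidity axiom where your direct surjectivity argument suffices.
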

\begin{proof}
	The $(\PARSIMONIOUSBOX_{\mins}\cap\PARSIMONIOUSBOX_{\pls})$-morphisms are bijections and hence $\PARSIMONIOUSBOX$-isomorphisms [\LEMBoxAutomorphisms].  
  For each injection $\boxobj{m}\ra\boxobj{n}$, $2^m\leqslant 2^n$ and hence $\deg\boxobj{m}=m\geqslant n=\deg\boxobj{n}$.  
  For each surjection $\boxobj{m}\ra\boxobj{n}$, $2^m\geqslant 2^n$ and hence $\deg\boxobj{m}=m\geqslant n=\deg\boxobj{n}$.  
  Each $\PARSIMONIOUSBOX$-morphism $\phi:\boxobj{m}\ra\boxobj{n}$ factors into its corestriction onto its image followed by the inclusion of an interval into $\boxobj{n}$.  
	Therefore $\PARSIMONIOUSBOX$-morphism factors into a $\PARSIMONIOUSBOX_{\mins}$-morphism followed by a $\PARSIMONIOUSBOX_{\pls}$-morphism [\LEMBoxInclusions].  
  Each $\PARSIMONIOUSBOX_{\mins}$-morphism $\zeta:\boxobj{m}\ra\boxobj{n}$ is epi [Lemma \ref{lem:split.epis}] and hence $\gamma\zeta=\id_{\boxobj{n}}\zeta$ implies $\gamma=\id_{\boxobj{n}}$.  
\end{proof}

Recall that an \textit{Eilenberg-Zilber (EZ) category} is a small category $\shape{1}$ equipped with a function $\mathrm{deg}$ from the object set of $\shape{1}$ to the set $\N$ of natural numbers such that \ldots
\begin{enumerate}
  \item \ldots $\deg(o_1)=\deg(o_2)$ if $o_1$ and $o_2$ are $\shape{1}$-isomorphic
  \item \ldots $\deg(o_1)>\deg(o_2)$ if there exists a non-invertible split epi $o_1\ra o_2$ in $\shape{1}$
  \item \ldots $\deg(o_1)<\deg(o_2)$ if there exists a non-invertible mono $o_1\ra o_2$ in $\shape{1}$
  \item \ldots each $\shape{1}$-morphism factors as a split epi followed by a mono
  \item \ldots any pair of split epis has an absolute pushout in $\shape{1}$
\end{enumerate}
where an \textit{absolute pushout} in a category $\cat{1}$ is a pushout in $\cat{1}$ whose image under every functor $\cat{1}\ra\cat{2}$ is a pushout in $\cat{2}$.

\begin{prop}
  \label{prop:eilenberg.zilber}
  The category $\PARSIMONIOUSBOX$ is Eilenberg-Zilber.
\end{prop}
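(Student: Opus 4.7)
The plan is to verify the five defining conditions of an Eilenberg-Zilber category for the degree function $\deg\boxobj{n}=n$. Conditions (1)--(4) should all fall out quickly from the generalized Reedy structure of Proposition \ref{prop:generalized.reedy} once we combine it with the identifications $\PARSIMONIOUSBOX_{\mins}=\{\text{surjections}\}=\{\text{split epis}\}$ from Lemma \ref{lem:split.epis} and $\PARSIMONIOUSBOX_{\pls}=\{\text{injections}\}=\{\text{monos}\}$ (the latter because $[0]$ separates points in any subcategory of $\SETS$).

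Concretely, I would argue: a $\PARSIMONIOUSBOX$-isomorphism is a bijection by \LEMBoxAutomorphisms{}, hence preserves cardinality $2^n$ and so $n$; a non-invertible split epi is a proper surjection $\boxobj{m}\twoheadrightarrow\boxobj{n}$ with $2^m>2^n$ and thus $m>n$; a non-invertible mono is a proper injection with $m<n$. For the factorization, any $\PARSIMONIOUSBOX$-morphism $\phi:\boxobj{m}\to\boxobj{n}$ has as its image an interval in $\boxobj{n}$ by interval-preservation, which by \LEMBoxInclusions{} is hit by a unique composite of cofaces from some $\boxobj{k}$; the corresponding corestriction-then-inclusion factorization of $\phi$ is a surjection followed by a composite of cofaces, i.e.\ a split epi followed by a mono.

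The main obstacle is condition (5), the existence of absolute pushouts of split epis, and the plan is to use the normal form of Lemma \ref{lem:monotone.surjections}. Given surjections $\phi_1:\boxobj{m}\twoheadrightarrow\boxobj{n_1}$ and $\phi_2:\boxobj{m}\twoheadrightarrow\boxobj{n_2}$, one precomposes each with a coordinate permutation to place it in the split form $\phi_i=\psi^{(i)}_1\otimes\cdots\otimes\psi^{(i)}_{n_i}\otimes\codeg^{\otimes r_i}$, thereby partitioning the $m$ source coordinates into $n_i$ blocks (one per output factor) together with a discarded block. The pushout should then be constructed as the surjection $\boxobj{m}\twoheadrightarrow\boxobj{n}$ indexed by the finest common coarsening of these two partitions, with its $n$ factors obtained by combining, via the monoidal operad action on $[1]$, the $\psi^{(i)}_j$ that have been amalgamated under the coarsening; the legs of the pushout square are then themselves surjections obtained by quotienting out the other partition.

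The hard part is absoluteness: one must show that the pushout square, together with chosen sections of $\phi_1$ and $\phi_2$ and compatible retractions on the legs, fits into a split coequalizer-like configuration whose colimit property is witnessed by equations among the morphisms alone, so that every functor out of $\PARSIMONIOUSBOX$ preserves it. I expect that a judicious choice of sections, built block-by-block from the splittings supplied by Lemmas \ref{lem:split.generalized.coconnections} and \ref{lem:split.epis}, will realize this split-coequalizer witness and finish the verification.
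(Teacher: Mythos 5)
Your handling of conditions (1)--(4) is fine and essentially parallels the paper, which also routes through Proposition~\ref{prop:generalized.reedy}, Lemma~\ref{lem:split.epis}, and \LEMBoxInclusions{}. But the paper never verifies condition (5) directly. It instead invokes a criterion from \cite[Theorem 5.6]{campion2023cubical}: a generalized Reedy category $\shape{1}$ is Eilenberg-Zilber provided every $\shape{1}^{\mins}$-morphism is split epi and each $\shape{1}^{\mins}$-morphism $\zeta$ with domain $o$ is determined, up to isomorphism in $o/\shape{1}$, by the set of morphisms $\iota$ with $\zeta\iota$ invertible. After Proposition~\ref{prop:generalized.reedy} and Lemma~\ref{lem:split.epis}, the remaining work in the paper is therefore just to show a surjection $\phi:\boxobj{m}\ra\boxobj{n}$ is recovered from its set of sections (up to coordinate permutation of $\boxobj{n}$); this reduces via the tensor decomposition of Lemma~\ref{lem:monotone.surjections} to $n=1$, where a surjection $\boxobj{m}\ra[1]$ is pinned down by the $1$-dimensional intervals on which it is a bijection.

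Your proposed direct construction of the absolute pushout has a genuine gap. Take $m=2$ and $\phi_1=\coconnect_{\mins},\,\phi_2=\coconnect_{\pls}:\boxobj{2}\twoheadrightarrow[1]$. Both surjections induce the same trivial partition $\{\{1,2\}\}$ with nothing discarded, so the ``finest common coarsening'' recipe would place the pushout at $[1]$ with identity legs, which would force $\coconnect_{\mins}=\coconnect_{\pls}$. In fact the pushout is $[0]$ with both legs $\codeg$: any cocone $\xi_1,\xi_2:[1]\ra Q$ with $\xi_1\coconnect_{\mins}=\xi_2\coconnect_{\pls}$ must, by evaluating at $(1,0),(0,1),(0,0),(1,1)$, have $\xi_1$ and $\xi_2$ equal and constant. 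So the pushout depends on the actual operadic terms $\psi_j^{(i)}$, not merely on the block structure, and the absoluteness witness is left entirely open. Replacing the direct construction with the Campion criterion, as the paper does, sidesteps building absolute pushouts and is what makes the verification go through.
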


The proof uses a recent observation that a generalized Reedy category $\shape{1}$ is Eilenberg-Zilber if each $\shape{1}^{\mins}$-morphism is a split epi and each $\shape{1}^{\mins}$-morphism $\zeta:o_1\ra o_2$ is uniquely determined up to isomorphism as an $(o_1/\shape{1})$-object by the set of all $\shape{1}$-morphisms $\iota$ with codomain $o_1$ such that $\zeta\iota$ is a $\shape{1}$-isomorphism \cite[Theorem 5.6]{campion2023cubical}.

\begin{proof}
  The category $\PARSIMONIOUSBOX$ is a generalized Reedy category [Proposition \ref{prop:generalized.reedy}] in which $\PARSIMONIOUSBOX^{\pls}=\PARSIMONIOUSBOX_{\pls}$ and $\PARSIMONIOUSBOX^{\mins}=\PARSIMONIOUSBOX_{\mins}$, so that in particular $\PARSIMONIOUSBOX^{\mins}$-morphism is split [Lemma \ref{lem:split.epis}].  

  Consider $\PARSIMONIOUSBOX_{\mins}$-morphism, a surjective $\PARSIMONIOUSBOX$-morphism $\phi:\boxobj{m}\ra\boxobj{n}$.
  It suffices to show that $\phi$ is uniquely determined up to coordinate permutation on $\boxobj{n}$ by the set of all sections to $\phi$ up to coordinate permutation \cite[Theorem 5.6]{campion2023cubical}.
  
  In the case $n=0$, $\boxobj{n}$ is terminal and $\phi:\boxobj{m}\ra[0]$ is the unique $\PARSIMONIOUSBOX$-morphism of the form $\boxobj{m}\ra\boxobj{n}$.  

  Consider the case $n>0$. 
  There exist surjective $\PARSIMONIOUSBOX$-morphisms $\phi_1,\phi_2,\ldots,\phi_n$ each of whose codomains is $[1]$ such that $\phi=\phi_1\otimes\cdots\otimes\phi_n$ [Lemma \ref{lem:monotone.surjections}].  
  Each section $\delta$ to $\phi$ up to coordinate permutation is a composite of cofaces [\LEMBoxInclusions] and therefore decomposes into a tensor product $\delta=\delta_1\otimes\cdots\otimes\delta_n$.  
  It therefore suffices to consider the case $n=1$.  
	In that case, each section to $\phi$ is uniquely determined by its image, a $1$-dimensional interval in $\boxobj{m}$ or equivalently the data of a minimal pair $(x,y)\in\OP{(\boxobj{m})}\times\boxobj{m}$ with $x\leqslant_{\boxobj{n}}y$ and $\phi(x)=0<1=\phi(y)$.  
  The set $S_\phi$ of all such pairs uniquely determines $\phi$ because $\phi(x)=0$ if and only if there exists $(x^*,y^*)\in S_\phi$ with $x\leqslant_{\boxobj{m}}x^*$ and $\phi(y)=1$ if and only if there exists $(x^*,y^*)\in S_\phi$ with $y^*\leqslant_{\boxobj{m}}y$.   
\end{proof}

\begin{prop}
  \label{prop:geometric.characterization}
  Consider a subcategory $\shape{1}$ of $\SETS$ such that the following all hold:
           \begin{enumerate}
		   \item $\shape{1}$ excludes the reversal $[1]\ra[1]$ and diagonal $[1]\ra[1]^2$
		   \item $\shape{1}$ contains $\DEL_1^*$ as a wide subcategory
		   \item every $\shape{1}$-morphism factors into a composite of a surjective $\shape{1}$-morphism followed by an injective $\shape{1}$-morphism.
  \end{enumerate}
  Then $\shape{1}$ is a subcategory of $\PARSIMONIOUSBOX$.
  The choice $\shape{1}=\PARSIMONIOUSBOX$ satisfies all three conditions above.  
\end{prop}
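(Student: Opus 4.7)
The plan is to deduce that every $\shape{1}$-morphism is an interval-preserving monotone function between finite Boolean lattices, and then verify that $\PARSIMONIOUSBOX$ itself satisfies the three hypotheses. First I would apply Lemma \ref{lem:reversals} to conditions (1) and (2) to conclude monotonicity of every $\shape{1}$-morphism, so that $\shape{1}\subset\POSETS$. Then Lemma \ref{lem:diagonals} applies to (1) and (2) and yields that every $\shape{1}$-morphism sends each $1$-dimensional interval onto an interval; in particular, each $\shape{1}$-morphism sends each covering pair to a covering pair (a $1$-dimensional interval has exactly two elements, and an interval in $\boxobj{n}$ of cardinality two is a $1$-dimensional interval).

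The crux, which I expect to be the main obstacle, is to show that each injective $\shape{1}$-morphism $\delta:\boxobj{k}\hookrightarrow\boxobj{n}$ has image a Boolean interval of $\boxobj{n}$. Applying $\delta$, which is cover-preserving and injective, to a maximal chain of length $k$ from $\min\boxobj{k}$ to $\max\boxobj{k}$ yields a chain of length $k$ from $\delta(\min\boxobj{k})$ to $\delta(\max\boxobj{k})$ in $\boxobj{n}$. Thus the Boolean interval $[\delta(\min\boxobj{k}),\delta(\max\boxobj{k})]$ of $\boxobj{n}$ has dimension exactly $k$ and cardinality $2^k$; it contains the image of $\delta$ by monotonicity, and the image has cardinality $2^k$ by injectivity, forcing equality.

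For an arbitrary $\shape{1}$-morphism $\phi:\boxobj{m}\ra\boxobj{n}$ and interval $I\subseteq\boxobj{m}$ of dimension $k$, let $\iota:\boxobj{k}\ra\boxobj{m}$ be the unique composite of cofaces with image $I$ [\LEMBoxInclusions]. Then $\iota$ is a $\DEL_1^*$-morphism and hence a $\shape{1}$-morphism by (2). Condition (3) lets me factor $\phi\iota=\delta\sigma$ with $\sigma$ a surjective $\shape{1}$-morphism and $\delta$ an injective $\shape{1}$-morphism. Surjectivity of $\sigma$ gives $\phi(I)=\delta\sigma(\boxobj{k})=\mathrm{image}(\delta)$, an interval by the previous step. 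Combined with monotonicity, this places every $\shape{1}$-morphism in $\PARSIMONIOUSBOX$.

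Finally, for the closing sentence I would verify the three conditions for the choice $\shape{1}=\PARSIMONIOUSBOX$: (1) follows from the reversal not being monotone and the diagonal $[1]\ra[1]^2$ sending $[1]$ onto $\{(0,0),(1,1)\}$, which is not an interval of $\boxobj{2}$; (2) follows from all cofaces and codegeneracies being interval-preserving monotone functions, so that $\DEL_1^*$ is a wide subcategory of $\PARSIMONIOUSBOX$; (3) follows from Proposition \ref{prop:generalized.reedy}, or directly from the fact that each $\PARSIMONIOUSBOX$-morphism factors as its surjective corestriction onto its (interval) image followed by the inclusion of that image.
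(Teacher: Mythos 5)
Your proof is correct and follows the same overall architecture as the paper's: reduce to monotonicity via Lemma~\ref{lem:reversals}, reduce to $1$-dimensional interval preservation via Lemma~\ref{lem:diagonals}, precompose with the coface inclusion of an interval, apply the factorization~(3), and argue that the injective factor has interval image. Where you differ is in how you handle that last crux. The paper's proof asserts that the image of the injective factor $\phi_{+I}:\boxobj{n_I}\ra\boxobj{n}$ ``contains exactly $n_I$ distinct maximal chains each of length $n_I$'' and concludes it is an interval; as written that count is off ($\boxobj{n_I}$ has $n_I!$ maximal chains) and the inference is terse. Your replacement argument is cleaner and more self-contained: injectivity upgrades ``maps $1$-dimensional intervals onto intervals'' to cover-preservation, a maximal chain of $\boxobj{k}$ then maps to a saturated chain of length $k$ from $\delta(\min)$ to $\delta(\max)$, so the Boolean interval $[\delta(\min),\delta(\max)]$ has exactly $2^k$ elements; comparing with the cardinality $2^k$ of the image forces equality. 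This is a genuine improvement in rigor at the most delicate point.

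One small imprecision in your first paragraph: the blanket claim that ``each $\shape{1}$-morphism sends each covering pair to a covering pair'' is false as stated --- a codegeneracy collapses covering pairs to singletons, and Lemma~\ref{lem:diagonals} only guarantees the image is an interval of dimension $0$ or $1$. You in fact need injectivity of $\delta$ to rule out the dimension-$0$ case, which you do correctly invoke in the next paragraph; so the error is in phrasing, not in the argument you actually run. It would be cleaner to drop the general claim and state cover-preservation only for the injective factor, where it is both true and used.
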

\begin{proof}
 Consider an EZ category $\shape{1}$ fitting into a chain of subcategories
 $$\DEL_1^*\subset\shape{1}\subset\SETS,$$
 with $\DEL_1^*$ wide in $\shape{1}$, and excluding the reversal $[1]\ra[1]$ and diagonal $[1]\ra[1]^2$.		
  Then $\shape{1}$ can be regarded as a submonoidal subcategory of $\POSETS$ [Lemma \ref{lem:reversals}] whose morphisms map $1$-dimensional intervals onto intervals of dimensions $0$ and $1$ [Lemma \ref{lem:diagonals}].

  Consider a $\shape{1}$-morphism $\phi$, necessarily of the form $\boxobj{m}\ra\boxobj{n}$ by $\DEL_1^*$ wide in $\shape{1}$.  
  Fix an interval $I$ in $\boxobj{m}$, the image of a unique composite $\delta_I$ of cofaces [\LEMBoxInclusions], a $\shape{1}$-morphism by $\DEL_1^*$ a subcategory of $\shape{1}$.
  The composite $\phi\delta_I$ in $\shape{1}$ factors as a composite $\phi_{+I}\phi_{-I}$ with $\phi_{-I}$ surjective and $\phi_{+I}$ injective by (3).
  The monotone injection $\phi_{+I}$ is of the form $\boxobj{n_I}\ra\boxobj{n}$ by $\DEL_1^*$ wide in $\shape{1}$.
  Then $\phi(I)$, the image of $\phi_{+I}$, contains exactly $n_I$ distinct maximal chains each of length $n_I$ because $\phi_{+I}$ is a monotone injection mapping $1$-dimensional intervals onto intervals, $1$-dimensional by injectivity. 
  Thus $\phi(I)$ is an interval in $\boxobj{n}$.
  Thus $\phi$ is a $\PARSIMONIOUSBOX$-morphism.

  Therefore $\shape{1}$ is a subcategory of $\PARSIMONIOUSBOX$.  
  
  Additionally, $\PARSIMONIOUSBOX$ is EZ [Proposition \ref{prop:eilenberg.zilber}] and excludes the reversal $[1]\ra[1]$ [Lemma \ref{lem:reversals}] and diagonal $[1]\ra[1]^2$ [Lemma \ref{lem:diagonals}]. 
\end{proof}

\begin{eg}
  The $\POSETS$-product morphism
  $$\gamma_{\mins}\times_{\POSETS}\gamma_{\pls}:\boxobj{2}\ra\boxobj{2}$$
  does not factor into a surjective monotone function between finite Boolean lattices followed by an injective monotone function between finite Boolean lattices.  
	This function is not a $\PARSIMONIOUSBOX$-morphism because its image is a set containing $3$ elements and hence is not an interval in a Boolean lattice.  
\end{eg}

We can decompose $\PARSIMONIOUSBOX$ as follows.

\begin{thm}
  \label{thm:algebraic.characterization}
  The category $\PARSIMONIOUSBOX$ is generated as a symmetric monoidal category by \ldots
  \begin{enumerate}
    \item $\sigma:[1]\ra[0]$
    \item $\delta_{\mins}:[0]\ra[1]$ and $\delta_{\pls}:[0]\ra[1]$
    \item all monotone surjections $\boxobj{n}\twoheadrightarrow[1]$
  \end{enumerate}
  For each $\PARSIMONIOUSBOX$-morphism $\phi:\boxobj{m}\ra\boxobj{n}$, there exists a unique $n$-tuple $(m_1,m_2,\ldots,m_n)$ of natural numbers with $m_1+m_2+\cdots+m_n\leqslant m$ and unique coset in $\Sigma_m/(\Sigma_{m_1}\times\cdots\times\Sigma_{m_n})$ such that for each representative $g\in\Sigma_m$ of that coset, there exist unique monotone functions $\phi_i:\boxobj{m_i}\ra[1]$ for $1\leqslant i\leqslant n$, each non-constant in each of its coordinates, with $\phi g=\phi_1\otimes\cdots\otimes\phi_n\otimes\sigma^{m-m_1-\cdots-m_n}$.
\end{thm}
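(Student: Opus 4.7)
The strategy is to reconstruct $\phi$ from its coordinate projections. For each $1\leqslant j\leqslant n$, let $\phi^{(j)}:\boxobj{m}\ra[1]$ denote the composite of $\phi$ with the codegeneracy composite $\boxobj{n}\epi[1]$ onto the $j$-th factor; this is a $\PARSIMONIOUSBOX$-morphism. Let $S(j)\subseteq\{1,\ldots,m\}$ be the set of coordinates in which $\phi^{(j)}$ is non-constant, so that $\phi^{(j)}$ factors uniquely as $\phi_j\pi_{S(j)}$, where $\pi_{S(j)}:\boxobj{m}\epi\boxobj{|S(j)|}$ is projection onto the coordinates in $S(j)$ and $\phi_j:\boxobj{|S(j)|}\ra[1]$ is monotone and non-constant in each of its coordinates. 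The decomposition claim of the theorem then reduces to showing that the sets $S(1),\ldots,S(n)$ are pairwise disjoint.

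The main obstacle is this disjointness. Suppose some $k$ lies in $S(j_1)\cap S(j_2)$ for $j_1\neq j_2$. Then $\phi^{(j_1)}$ and $\phi^{(j_2)}$ are non-constant monotone functions into the two-element lattice $[1]$, hence surjective. Post-composing $\phi$ with the codegeneracy composite $\boxobj{n}\epi\boxobj{2}$ projecting onto coordinates $(j_1,j_2)$ produces a $\PARSIMONIOUSBOX$-morphism $\psi:\boxobj{m}\ra\boxobj{2}$. The image of $\psi$ is the projection of the interval $\phi(\boxobj{m})\subseteq\boxobj{n}$ into $\boxobj{2}$; by interval-preservation this is an interval in $\boxobj{2}$, and since both of its coordinate projections are surjective onto $[1]$ this interval must equal all of $\boxobj{2}$, so $\psi$ is surjective. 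But then both coordinate projections of $\psi$ depend essentially on the $k$-th source coordinate, directly contradicting the disjointness of essential-coordinate subsequences established in the proof of Lemma \ref{lem:monotone.surjections}.

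Once disjointness is established, set $T=\{1,\ldots,m\}\setminus\bigsqcup_j S(j)$, $r=|T|$, and $m_j=|S(j)|$. Choose any $g\in\Sigma_m$ sending the consecutive blocks of sizes $m_1,m_2,\ldots,m_n,r$ bijectively onto $S(1),S(2),\ldots,S(n),T$, increasingly within the final $T$-block by convention. The identity $\phi g=\phi_1\otimes\cdots\otimes\phi_n\otimes\sigma^{\otimes r}$ follows by checking that both sides agree on each coordinate of $\boxobj{n}$. The tuple $(m_1,\ldots,m_n)$ is intrinsic to $\phi$, being the sequence of essential-coordinate counts of its coordinate projections; two representatives $g$ yield factorizations with the same data precisely when they differ by a permutation preserving each block $S(j)$, giving uniqueness of the coset in $\Sigma_m/(\Sigma_{m_1}\times\cdots\times\Sigma_{m_n})$. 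Given $g$, each $\phi_j$ is read off uniquely as the $j$-th coordinate projection of $\phi g$ restricted to its source block.

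For the generation claim, each $\phi_j$ with $m_j>0$ is a monotone surjection $\boxobj{m_j}\epi[1]$ (a non-constant monotone function into the two-element lattice), and each $\phi_j$ with $m_j=0$ is one of the cofaces $\coface_\pm:[0]\ra[1]$. The permutation $g^{-1}$ decomposes into principal coordinate transpositions in the symmetric monoidal structure on $\PARSIMONIOUSBOX$ [\LEMBoxAutomorphisms]. Hence $\phi=(\phi_1\otimes\cdots\otimes\phi_n\otimes\sigma^{\otimes r})g^{-1}$ exhibits $\phi$ as a composite of tensor products and symmetries of the listed generators, completing the proof.
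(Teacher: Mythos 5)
Your proof is correct, and it rests on the same essential ingredient as the paper's---the disjointness of essential-coordinate sequences for surjective $\PARSIMONIOUSBOX$-morphisms, which both arguments source from Lemma~\ref{lem:monotone.surjections}---but you organize the argument in the opposite order. The paper proves the generation statement first: it invokes the generalized Reedy/Eilenberg--Zilber factorization from Proposition~\ref{prop:generalized.reedy}, factors the surjective part via Lemma~\ref{lem:monotone.surjections} and the injective part into cofaces via \LEMBoxInclusions{}, and only then deduces disjointness of the $s(i)$'s as a consequence of that factorization. You instead prove disjointness directly: given a common essential coordinate of two projections $\phi^{(j_1)},\phi^{(j_2)}$, you use interval-preservation to show the combined projection $\psi:\boxobj{m}\ra\boxobj{2}$ is surjective (the image of the interval $\phi(\boxobj{m})$ projected to two coordinates is an interval with both projections surjective, hence all of $\boxobj{2}$), then hit Lemma~\ref{lem:monotone.surjections} with that surjection to get the contradiction. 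The decomposition of $\phi$ then follows, and the generation statement is read off as a corollary rather than being established first. Your route buys a more self-contained argument that bypasses the Eilenberg--Zilber factorization machinery, at the modest cost of the short surjectivity check for $\psi$; the paper's route makes the generation statement the primary organizing principle and gets the decomposition almost for free afterward. One small remark: your phrase ``the disjointness of essential-coordinate subsequences established in the proof of Lemma~\ref{lem:monotone.surjections}'' pulls an intermediate step out of a proof rather than citing a stated result; it would be cleaner to simply apply the lemma itself to $\psi$ (it then factors after permutation as $\phi_1\otimes\phi_2$, from which disjointness is immediate). Also, your coset-uniqueness sentence (``precisely when they differ by a permutation preserving each block $S(j)$'') inherits the same imprecision as the theorem statement itself when $m-m_1-\cdots-m_n\geqslant 2$, since permutations of the complementary block $T$ also change the coset in $\Sigma_m/(\Sigma_{m_1}\times\cdots\times\Sigma_{m_n})$ without changing the factorization data; this is not a defect of your argument relative to the paper.
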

\begin{proof}
  For each $j$ and subsequence $(s_1,\ldots,s_k)$ of $(1,2,\ldots,j)$, define
  $$\sigma_{(s_1,\ldots,s_k);j}:\boxobj{j}\ra\boxobj{k},\quad 
    \sigma_{(s_1,\ldots,s_k);j}(x_1,\ldots,x_{j})=(x_{s_1},x_{s_2},\ldots,x_{s_k})$$  
  
  Every terminal non-identity $\PARSIMONIOUSBOX$-morphism is a composite of codegeneracies and hence a composite of tensor products of identities with $\sigma$.  
  Every $\PARSIMONIOUSBOX$-morphism factors into a composite of surjective $\PARSIMONIOUSBOX$-morphisms followed by injective $\PARSIMONIOUSBOX$-morphisms [Proposition \ref{prop:generalized.reedy}].  
  Every non-identity injective $\PARSIMONIOUSBOX$-morphism, up to permutation of tensor products, is a composite of cofaces [\LEMBoxInclusions] and hence a tensor  product in $\PARSIMONIOUSBOX$ of monotone functions of the form $[0]\ra[1]$.
  Therefore every non-terminal $\PARSIMONIOUSBOX$-morphism, up to permutation of tensor products, is a tensor product in $\PARSIMONIOUSBOX$ of monotone functions of the form $\boxobj{n}\ra[1]$ [Lemma \ref{lem:monotone.surjections}]. 
  Thus $\PARSIMONIOUSBOX$ is generated as a symmetric monoidal category by (1)-(3).  
  
  Consider a $\PARSIMONIOUSBOX$-morphism $\phi:\boxobj{m}\ra\boxobj{n}$. 
  For each $1\leqslant i\leqslant n$, there exists a unique minimal subsequence $s(i)$ of $(1,2,\ldots,m)$ such that $\sigma_{(i);n}\phi$ factors through $\sigma_{s(i)}$. 
  Let $m_0=0$.  
  Let $m_i$ be the length of $s(i)$.  
  Let $t(i)$ be the subsequence $(m_1+\cdots+m_{i-1}+1,\ldots,m_1+\cdots+m_i)$ of $(1,2,\ldots,m)$.  
  
  The sequences $s(1),s(2),\ldots,s(n)$ are disjoint because $\phi$ is a tensor product of monotone functions to $[1]$ up to coordinate permutations.  
  Therefore $m_1+m_2+\cdots+m_n\leqslant m$.  
  There exists a unique coset in $\Sigma_m/(\Sigma_{m_1}\times\cdots\times\Sigma_{m_n})$ whose representatives map the elements in $t(i)$ onto the elements of $s(i)$ for each $1\leqslant i\leqslant n$.  

  Fix such a representative $g\in\Sigma_m$.  
  Define $\phi_i:\boxobj{m_i}\ra[1]$ by $\phi_i\sigma_{t(i);m}=\sigma_{(i);n}\phi g$.  
  Then $\phi_i$ is non-constant by minimality of $m_i$.  
  Hence $\phi g$ is constant on its last $m-m_1-m_2-\cdots-m_n$ coordinates. 
  Thus $\phi g=\phi_1\otimes\cdots\phi_{m_n}\otimes\sigma^{m-m_1-\cdots-m_n}$.
\end{proof}

Take an \textit{operad} to simply mean a symmetric operad on the Cartesian mononidal category $\SETS$ \cite{may2006geometry}. 
Just as operads determine monads, monads on $\SETS$ determine operads \cite[Exercise 2.2.11]{leinster2004higher}.  
We detail one such example.
Let $\mathscr{O}$ be what we term the \textit{distributive lattice operad}, the operad $\mathscr{O}$ such that $\mathscr{O}(n)$ underlies the free distributive lattice on $\{1,2,\ldots,n\}$ equipped with the action of $\Sigma_n$ permuting the generators $1,2,\ldots,n$ such that the operad actions are defined by commutative diagrams 
\begin{equation*}
	\begin{tikzcd}
		\mathscr{O}(k)\times\prod_{i=1}^k\mathscr{O}(n_i)\ar[rr,dotted]\ar[d,hookrightarrow] & & \mathscr{O}(n_1+\cdots+n_k)
		\\
		\mathscr{O}(k)\times\mathscr{O}(n_1+\cdots+n_k)^{k}\ar[rr,equals] & & \mathscr{O}(k)\times\DISLATS(\mathscr{O}(k),\mathscr{O}(n_1+\cdots+n_k))\ar[u]
	\end{tikzcd}
\end{equation*}
in which the left vertical arrow is induced by injections $\{1,2,\ldots,n_i\}\ra\{1,2,\ldots,n_1+\cdots+n_k\}$ sending each $j$ to $(n_1+\cdots+n_{i-1})+j$, the bottom horizontal identification is induced from the identification $\SETS(\{1,2,\ldots,k\},\mathscr{O}(n_1+\cdots+n_k))=\DISLATS(\mathscr{O}(k),\mathscr{O}(n_1+\cdots+n_k))$, and the right vertical arrow is defined by evaluation.  
We can make the natural identification
$$\mathscr{O}(n)=\PARSIMONIOUSBOX(\boxobj{n},[1]),$$
where the right side is equipped with the lattice operations defined element-wise from $[1]$.  
In other words, the endomorphism operad of $[1]$ in $\PARSIMONIOUSBOX$ is $\mathscr{O}$.  
Under this identification, we can regard a monotone function $\boxobj{n}\ra[1]$ as an analogue of an $n$-variable polynomial, where binary suprema and infima play the of addition and multiplication; this abstract $n$-variable polynomial is exactly a term in the free distributive lattice on $n$-generators $x_1,x_2,\ldots,x_n$.  
This identification allows us to reinterpret a composite $\phi(\phi_1,\phi_2,\ldots,\phi_k)$ of monotone functions $\phi:\boxobj{k}\ra[1]$ and $\phi_i:\boxobj{n_i}\ra[1]$ for $1\leqslant i\leqslant k$ as the term in $\mathscr{O}(n_1+\cdots+n_k)$ obtained by substituting the generators in the term $\phi\in\mathscr{O}(k)$ with the terms $\phi_i\in\mathcal{O}(n_i)$.  
The theorem implies that the symmetric submonoidal category of $\PARSIMONIOUSBOX$ containing all but the non-terminal non-identity morphisms is generated, albeit not freely, by the distributive lattice operad. 

\begin{rem}
  All the monotone surjections of the form
  $$[1]\ra[0],\quad\boxobj{n}\twoheadrightarrow[1]$$
  can be regarded as \textit{generalized codegeneracies} in $\PARSIMONIOUSBOX$ that include the original codegeneracies, coconnections of both kinds, and many more kinds of maps.  
  The category $\PARSIMONIOUSBOX$ is generated by the cofaces and the generalized codegeneracies.
\end{rem}

The $\DEL_1[\tau,\gamma_{\mins}]^*$-morphisms have a simple explicit characterization.

\begin{cor}
  \label{cor:join.characterization}
  The following are equivalent for a function of the form
  $$\phi:\boxobj{m}\ra\boxobj{n}.$$
  \begin{enumerate}
    \item $\phi$ is a $\DEL_1[\tau,\gamma_{\mins}]^*$-morphism
    \item $\phi$ defines an interval-preserving meet-semilattice homomorphism
  \end{enumerate}
\end{cor}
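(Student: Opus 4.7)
The plan is to reduce both implications to a classification of meet-semilattice homomorphisms into $[1]$, using Theorem \ref{thm:algebraic.characterization} for the harder direction.

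For (1) $\Rightarrow$ (2), I would verify that each generator of $\DEL_1[\tau,\gamma_{\mins}]^*$ is an interval-preserving meet-semilattice homomorphism and then propagate by closure under composition and tensor product. Interval-preservation is automatic from the inclusion $\DEL_1[\tau,\gamma_{\mins}]^*\subseteq\PARSIMONIOUSBOX$. Meet-preservation is immediate for $\delta_{\pm}$, $\sigma$, and $\tau$, while $\gamma_{\mins}$ is literally the binary meet on $\boxobj{2}$. Composition preserves the property functorially, and tensor products preserve it because meets on $\POSETS$-products are computed component-wise.

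For (2) $\Rightarrow$ (1), first note that meet-preservation forces monotonicity (via $x\leqslant y \Leftrightarrow x=x\wedge y$), so an interval-preserving meet-semilattice homomorphism is a $\PARSIMONIOUSBOX$-morphism. I would apply Theorem \ref{thm:algebraic.characterization} to obtain a coordinate permutation $g\in\Sigma_m$ and a decomposition
\[
\phi g \;=\; \phi_1\otimes\cdots\otimes\phi_n\otimes\sigma^{\otimes(m-m_1-\cdots-m_n)}
\]
with each $\phi_i\colon\boxobj{m_i}\ra[1]$ non-constant in each of its coordinates. Because $g$ is a lattice isomorphism and the decomposition is a tensor product, each $\phi_i$ inherits meet-preservation from $\phi$. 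Since coordinate permutations lie in $\DEL_1[\tau]^*\subseteq\DEL_1[\tau,\gamma_{\mins}]^*$ and $\sigma\in\DEL_1\subseteq\DEL_1[\tau,\gamma_{\mins}]^*$, it suffices to show each $\phi_i$ lies in $\DEL_1[\tau,\gamma_{\mins}]^*$.

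The heart of the argument is then a direct classification: the only meet-semilattice homomorphism $\psi\colon\boxobj{k}\ra[1]$ non-constant in each coordinate is $(x_1,\ldots,x_k)\mapsto x_1\wedge\cdots\wedge x_k$. To see this, observe that each $x\in\boxobj{k}$ can be written as the meet $\bigwedge_{i:x_i=0} e_i$ of the coatoms $e_i$ (the element with $0$ in position $i$ and $1$ elsewhere), so $\psi$ is determined by $\psi(\top)$ together with $\psi(e_1),\ldots,\psi(e_k)$. Monotonicity together with non-constancy in each coordinate forces $\psi(\top)=1$ and $\psi(e_j)=0$ for every $j$, from which the stated formula follows. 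For $k\geqslant 2$ this function is an iterated $\gamma_{\mins}$, while the degenerate cases $k=1$ (yielding $\id_{[1]}$) and $k=0$ (yielding $\delta_{\pm}$) lie in $\DEL_1$. Putting the pieces back together gives $\phi g\in\DEL_1[\tau,\gamma_{\mins}]^*$ and hence $\phi\in\DEL_1[\tau,\gamma_{\mins}]^*$. The main obstacle is the classification step, but once the coatom presentation is in hand it is elementary; no new use of the interval-preserving hypothesis beyond what was already absorbed into Theorem \ref{thm:algebraic.characterization} is required here.
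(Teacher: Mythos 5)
Your proof is correct, and the forward implication matches the paper's. For the converse your route differs in two respects. First, the paper initially factors $\phi$ as a surjective interval-preserving meet-semilattice homomorphism followed by a $\DEL_1[\tau]^*$-inclusion and only then invokes the tensor decomposition for the surjective part, whereas you apply Theorem \ref{thm:algebraic.characterization} directly to $\phi$; this is legitimate because the theorem already absorbs the non-surjective part into the factors $\phi_i$ with $m_i=0$ (which are cofaces) and into the trailing $\sigma$'s, so the preliminary surjection/inclusion factorization is unnecessary. Second, and more substantively, once both arguments reduce to classifying meet-semilattice homomorphisms $\boxobj{k}\ra[1]$, the paper passes through Birkhoff Duality: it regards $\phi$ as an element of the free distributive lattice on $m$ generators and argues it must be $\bigwedge_{i\in I}x_i$. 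You instead give a direct coatom argument, using that every $x\in\boxobj{k}$ is $\bigwedge_{i:x_i=0}e_i$ and that $\psi(e_j)=1$ would make $\psi$ constant in coordinate $j$. Your version is more elementary and self-contained, while the paper's version ties the corollary into the distributive-lattice-operad viewpoint developed around Theorem \ref{thm:algebraic.characterization}. One small imprecision: as stated, your classification claim for $k=0$ only yields $\delta_{\pls}$ (the empty meet), while $\delta_{\mins}$ is also vacuously non-constant in each coordinate; you do flag $k=0$ as a separate degenerate case lying in $\DEL_1$, so this does not create a gap, but the blanket sentence about uniqueness of $\psi$ should really be qualified to $k\geqslant 1$.
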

\begin{proof}
  Each $\DEL_1[\tau,\gamma_{\mins}]^*$-morphism is an interval-preserving meet-semilattice homomorphism because tensor products preserve such maps and $\tau,\gamma_{\mins}$, and identities on finite Boolean lattices all examples of such  maps.   
  Consider an interval-preserving meet semilattice homomorphism $\phi$ of the form $\boxobj{m}\ra\boxobj{n}$.  

  It suffices to show $\phi$ is a $\DEL_1[\tau,\gamma_{\mins}]^*$-morphism.
  The function $\phi$ factors as a composite of its corestriction to its image in $\boxobj{n}$, a surjective interval-preserving meet semilattice homomorphism, followed by an inclusion of intervals, a $\DEL_1^*[\tau]$-morphism [\LEMBoxInclusions].
  Therefore it suffices to take the case $\phi$ a surjective interval-preserving meet semilattice homomorphism.  
  A tensor product of $\PARSIMONIOUSBOX$-morphisms $\alpha,\beta$ is a meet semilattice homomorphism if and only if $\alpha,\beta$ are both meet semilattice homomorphisms.  
  Thus it suffices to take the case $n=1$ [Theorem \ref{thm:algebraic.characterization}]. 
  There exists a unique element $p_\phi$ in the free distributive lattice on $m$ generators $t_1,\ldots,t_m$ such that $\phi(x_1,\ldots,x_n)$ corresponds to the evaluation of $p_\phi$ as an element in $[1]$ when $t_i=x_i$ for each $(x_1,\ldots,x_m)\in\boxobj{m}$ by Birkhoff Duality \cite{church1940numerical}. 
  Then there exists some subset $I\subset\{1,2,\ldots,m\}$ such that $\phi(x)=\bigwedge_{i\in I}x_i$ because otherwise $\phi$ would not preserve binary meets.
  Hence $\phi$ is a composite of codegeneracies and coconnections of the first kind.  
\end{proof}

The $\DEL_1[\tau,\gamma_{\pls}]^*$-morphisms have a dual characterization.

\begin{cor}
  \label{cor:meet.characterization}
  The following are equivalent for a function of the form
  $$\phi:\boxobj{m}\ra\boxobj{n}.$$
  \begin{enumerate}
    \item $\phi$ is a $\DEL_1[\tau,\gamma_{\pls}]^*$-morphism
    \item $\phi$ is an interval-preserving join-semilattice homomorphism
  \end{enumerate}
\end{cor}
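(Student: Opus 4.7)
The plan is to mirror the proof of Corollary \ref{cor:join.characterization} line-for-line, replacing each occurrence of meets with joins and $\gamma_{\mins}$ with $\gamma_{\pls}$. First I would dispatch the easy direction $(1)\Rightarrow(2)$: the generators $\tau$ and $\gamma_{\pls}$ are interval-preserving join-semilattice homomorphisms between finite Boolean lattices, as are identities on such lattices, and both properties are closed under $\otimes$ in $\POSETS$; since $\DEL_1^*$-morphisms are built from cofaces and codegeneracies, which trivially preserve joins and intervals, every $\DEL_1[\tau,\gamma_{\pls}]^*$-morphism inherits both properties.

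For the converse, start with an interval-preserving join-semilattice homomorphism $\phi:\boxobj{m}\ra\boxobj{n}$. Factor $\phi$ as its corestriction to its image followed by the inclusion of that image as an interval of $\boxobj{n}$; by \LEMBoxInclusions{} the inclusion is a composite of cofaces, hence a $\DEL_1^*[\tau]$-morphism, and the corestriction remains a surjective interval-preserving join-semilattice homomorphism. So it suffices to handle the surjective case. Next, because a tensor product of $\PARSIMONIOUSBOX$-morphisms is a join-semilattice homomorphism if and only if each tensor factor is, Theorem \ref{thm:algebraic.characterization} reduces the problem to the case $n=1$ up to a coordinate permutation (which lies in $\DEL_1[\tau]^*$).

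The core step is then the case $\phi:\boxobj{m}\ra[1]$. By Birkhoff duality (as cited in the proof of Corollary \ref{cor:join.characterization}) there is a unique element $p_\phi$ of the free distributive lattice on generators $t_1,\ldots,t_m$ such that $\phi(x_1,\ldots,x_m)$ is the evaluation of $p_\phi$ under $t_i\mapsto x_i$. The key observation is that if $\phi$ preserves binary joins, then $p_\phi$ must be of the form $\bigvee_{i\in I}t_i$ for some $I\subset\{1,2,\ldots,m\}$; any presence of a nontrivial meet in the normal form of $p_\phi$ would obstruct join-preservation, witnessed by testing on two coordinate vectors whose join is the top. Such a $\phi$ is visibly a composite of codegeneracies and coconnections of the second kind, hence a $\DEL_1[\tau,\gamma_{\pls}]^*$-morphism.

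The main obstacle, as in the meet case, is verifying that every normal-form term in the free distributive lattice on $m$ generators which induces a join-preserving map $\boxobj{m}\ra[1]$ is already a pure join of generators; everything else in the argument is formal and parallels Corollary \ref{cor:join.characterization} verbatim. Alternatively, one could deduce this corollary from Corollary \ref{cor:join.characterization} by applying the involution of $\PARSIMONIOUSBOX$ induced by Birkhoff duality on the one-dimensional slice, which swaps $\gamma_{\mins}$ and $\gamma_{\pls}$ and swaps meets with joins while fixing $\tau$ and preserving both intervals and the class of generating monotone surjections.
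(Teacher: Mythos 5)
Your proof is correct and matches exactly what the paper intends: the paper omits the proof of Corollary \ref{cor:meet.characterization}, stating only that it is the ``dual characterization'' to Corollary \ref{cor:join.characterization}, and your argument is precisely the line-by-line dualization the authors have in mind (meets $\leftrightarrow$ joins, $\gamma_{\mins}\leftrightarrow\gamma_{\pls}$). Your closing remark about the order-reversing involution $x\mapsto\mathbf{1}-x$ is also a valid shortcut, and arguably cleaner, but the direct dualization is what the paper's ``dual'' remark suggests.
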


The $\DEL_1[\tau]^*$-morphisms are already known to have a simple characterization.

\begin{thm:minimal.symmetric.monoidal.site}
  The following are equivalent for a function of the form
  $$\phi:\boxobj{m}\ra\boxobj{n}.$$
  \begin{enumerate}
    \item $\phi$ is a  $\DEL_1[\tau]^*$-morphism
    \item $\phi$ is an interval-preserving lattice homomorphism
  \end{enumerate}
\end{thm:minimal.symmetric.monoidal.site}

\section{Presheaves}\label{sec:presheaves}
Presheaves over general small categories are interpretable as combinatorial models of spaces.  
Presheaves over small subcategories of $\CATS$ are interpretable as combinatorial models of directed spaces.  
For each small subcategory $\shape{1}$ of $\CATS$, let $\nerve_{\shape{1}}$ denote the \textit{nerve functor} 
$$\nerve_{\shape{1}}=(\shape{1}\ira\CATS)^*:\CATS\ra\hat{\shape{1}}.$$

The Yoneda Lemma implies that we can make the following identification
$$\hat{\shape{1}}(\shape{1}[o],C)=C(o)$$ 
natural in all small categories $\shape{1}$, $\shape{1}$-objects $o$, and $\hat{\shape{1}}$-objects $C$. 
A \textit{vertex} of a $\SETS$-valued presheaf $C$ over an EZ category $\shape{1}$ is an element in $C(o)$ with $\deg(o)=0$. 
A $\SETS$-valued presheaf over a small category is \textit{atomic} if it is the quotient of a representable.  
The \textit{dimension} of a presheaf $C$ over an EZ category $\shape{1}$ is $-1$ if $C=\varnothing$ and otherwise is the infimum over all natural numbers $n$ such that $C$ is a colimit of representables $\shape{1}[o]$ with $\catfont{deg}(o)\leqslant n$.  
For each EZ category $\shape{1}$ and $\shape{1}$-object $o$, let $\partial\shape{1}[o]$ denote the maximal subpresheaf of the representable $\shape{1}[o]$ having dimension strictly less than $\catfont{deg}(o)$. 
For each $\SETS$-valued presheaf $C$ over a small category $\shape{1}$ and a $(\shape{1}/C)$-object $\theta$, write $\Star_C(\theta)$ for the union of all atomic subpresheaves $S\subset C$ to which $\theta$ corestricts; call $\Star_C(\theta)$ the \textit{closed star} of $\theta$ in $C$.

\subsection{Simplicial}
We call the objects and morphisms of $\SIMPLICIALSETS$ respectively \textit{simplicial sets} and \textit{simplicial functions}. 
Write $\sd_{k+1}$ for the continuous, cocontinuous endofunctor
$$\sd_{k+1}=\SETS^{\OP{((-)^{\oplus(k+1):\DEL\ra\DEL})}}:\SIMPLICIALSETS\ra\SIMPLICIALSETS.$$

The simplicial set $\sd_{2}S$ is referred to elsewhere as the \textit{ordinal subdivision} of a simplicial set $S$ \cite{ehlers2008ordinal}.  
Intuitively, $\sd_{k+1}S$ encodes a $(k+1)$-fold edgewise subdivision of the simplicial set $S$.  

\begin{eg}
  We can make the natural identification $\sd_{1}=\id_{\SIMPLICIALSETS}$.

\end{eg}

Write $\epsilon$ for the natural transformation $\sd_3\ra\id_{\SIMPLICIALSETS}$ induced by the monotone function $0\mapsto 1:[0]\ra[0]\oplus[0]\oplus[0]$.

\subsection{Cubical}
Henceforth fix a small monoidal category $\BOX$ containing $\DEL_1^*$ as a wide submonoidal subcategory.
We call the objects and morphisms of $\hat\BOX$ \textit{cubical sets} and \textit{cubical functions}. 
For each cubical set $C$, let $C_n=C(\boxobj{n})$.  
For a given cubical set $C$, define
$$d_{\pm i;n+1}=C(\delta_{\pm i;n+1}):C_{n+1}\ra C_n,\quad s_{i;n+1}:C(\sigma_{i;n+1}):C_n\ra C_{n+1}$$

Day convolution extends the tensor product on $\BOX$, a restriction and corestriction of the Cartesian monoidal product on $\POSETS$, to a cococontinuous tensor product allowing us to regard $\hat\BOX$ as a monoidal category with unit $\BOX[0]$.  

\subsubsection{Boolean complexes}\label{sec:boolean.complexes}
Suppose $\BOX$ lies in a chain of categories
$$\DEL_1[\tau]^*\subset\BOX\subset\POSETS$$
with $\DEL_1[\tau]^*$ wide in $\BOX$.  
For each poset $P$, define $\BOX[P]$ to be the subpresheaf of $\cnerve\,P$ whose cubes are those monotone functions $\boxobj{n}\ra P$ corestricting to Boolean intervals $I$ in $P$ such that the corestrictions $\boxobj{n}\ra I$ are $(\boxobj{n}/\POSETS)$-isomorphic to $\BOX$-morphisms. 
In the case that $\BOX$ is a subcategory of $\PARSIMONIOUSBOX$ that contains the coordinate permutations and admits a (surjective, injective) factorization for its morphisms, then the atomic subpresheaves of $\BOX[P]$ correspond to the Boolean intervals in $P$.  

\begin{eg}
  For $P=\boxobj{n}$, $\BOX[P]$ is the usual representable
  $$\BOX\boxobj{n}=\BOX(-,\boxobj{n}):\OP{(\boxobj{n})}\ra\SETS.$$
\end{eg}

\begin{eg}
  For $\BOX=\DEL_1[\tau,\gamma_{\mins},\gamma_{\pls},\diagonal]^*$, $\BOX[P]_n$ is the set of monotone functions
  $$\boxobj{n}\ra P$$
  whose images lie inside Boolean intervals in the poset $P$.
\end{eg}

\begin{eg}
  The subset $\BOX[P]_n\subset(\nerve_\BOX P)_n=\POSETS(\boxobj{n},P)$ consists of those functions
  $$\boxobj{n}\ra P$$
  mapping Boolean intervals onto Boolean intervals and defining \ldots
  \begin{enumerate}
    \item \ldots monotone functions if $\BOX=\PARSIMONIOUSBOX$
    \item \ldots join-semilattice homomorphisms if $\BOX=\DEL_1[\tau,\coconnect_{\pls}]^*$
    \item \ldots meet-semilattice homomorphisms if $\BOX=\DEL_1[\tau,\coconnect_{\mins}]^*$
    \item \ldots lattice homomorphisms if $\BOX=\DEL_1[\tau]^*$
  \end{enumerate}
\end{eg}

We can abstractly identify when a monotone function $P_1\ra P_2$ of posets induces a cubical function $\BOX[P_1]\ra\BOX[P_2]$.

\begin{prop}
  \label{prop:boolean.functions}
  Suppose $\BOX$ lies in a chain of subcategories
  $$\DEL_1[\transpose]^*\subset\BOX\subset\POSETS$$
  with $\DEL_1[\transpose]^*$ wide in $\BOX$.  
  The following are equivalent for a monotone function $\phi:P_1\ra P_2$ of posets:
  \begin{enumerate}
	  \item There exists a unique dotted cubical function making the following commute:
    \begin{equation*}
    \begin{tikzcd}
	    \BOX[P_1]\ar[d,hookrightarrow]\ar[dotted]{r}[above]{\BOX[\phi]} & \BOX[P_2]\ar[d,hookrightarrow]\\
	    \cnerve P_1\ar{r}[below]{\cnerve\phi} & \cnerve P_2
    \end{tikzcd}
  \end{equation*}
  \item Each restriction of $\phi$ to a finite Boolean interval corestricts to a monotone function $\POSETS^{[1]}$-isomorphic to a $\BOX$-morphism.
	  \end{enumerate}
  \end{prop}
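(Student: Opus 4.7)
The plan is to reduce both directions to the observation that the inclusion $\BOX[P_i]\hookrightarrow\cnerve P_i$ is mono; consequently, any dotted $\BOX[\phi]$ must act on cubes by post-composition with $\phi$, so it is unique if it exists, and it exists iff $\cnerve\phi$ carries each cube of $\BOX[P_1]$ to a cube of $\BOX[P_2]$. Uniqueness in (1) then drops out automatically once existence is secured.

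For $(2)\Rightarrow(1)$, I would take an arbitrary cube $\alpha\colon\boxobj{n}\to P_1$ of $\BOX[P_1]$, witnessed by a Boolean interval $I\supset\alpha(\boxobj{n})$ together with a $(\boxobj{n}/\POSETS)$-iso from the corestriction $\boxobj{n}\to I$ to a $\BOX$-morphism $f\colon\boxobj{n}\to\boxobj{k}$. Hypothesis (2) applied to $I$ supplies a $\BOX$-morphism $h\colon\boxobj{k'}\to\boxobj{l}$ together with $\POSETS$-isos $I\cong\boxobj{k'}$ and $\phi(I)\cong\boxobj{l}$ fitting into a commutative square with $\phi|_I$; in particular, $\phi(I)$ is a finite Boolean interval of $P_2$. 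Composing $f$ with $h$ in $\BOX$---after absorbing the automorphism $\boxobj{k}\cong\boxobj{k'}$, a coordinate permutation and hence a $\BOX$-morphism by $\DEL_1[\transpose]^*\subset\BOX$---yields a coslice-iso exhibiting $\phi\alpha$ as a cube of $\BOX[P_2]$ with witness $\phi(I)$.

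For $(1)\Rightarrow(2)$, I would fix a finite Boolean interval $I\subset P_1$ of dimension $n$ and pick any $\POSETS$-iso $\iota\colon\boxobj{n}\cong I$. The composite $\iota^{\sharp}\colon\boxobj{n}\xrightarrow{\iota}I\hookrightarrow P_1$ is manifestly a cube of $\BOX[P_1]$: the witness is $I$ itself, and its corestriction is the $\POSETS$-iso $\iota$, coslice-iso to $\id_{\boxobj{n}}\in\BOX$. By (1), $\phi\iota^{\sharp}$ is a cube of $\BOX[P_2]$, supplying a Boolean interval $J\supset\phi(I)$, a $\BOX$-morphism $g\colon\boxobj{n}\to\boxobj{m}$, and a $\POSETS$-iso $j\colon J\cong\boxobj{m}$ transporting the corestriction of $\phi\iota^{\sharp}$ to $g$. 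Precomposing with $\iota^{-1}$ on the source and restricting $j$ to $\phi(I)$ on the target then repackages this data as the $\POSETS^{[1]}$-iso between $\phi|_I$ and a $\BOX$-morphism demanded by (2).

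I expect the main obstacle to be the bookkeeping around $\POSETS^{[1]}$- versus $(\boxobj{n}/\POSETS)$-isomorphisms, and most acutely in the final step of $(1)\Rightarrow(2)$ the verification that $\phi(I)$ is a Boolean interval of $P_2$, not merely a Boolean sub-lattice of $J$. Since $\phi(I)$ corresponds under $j$ to the image $g(\boxobj{n})\subset\boxobj{m}$, this reduces to showing that images of $\BOX$-morphisms between finite Boolean lattices are Boolean intervals---a fact one extracts by factoring $g$ through a surjection onto its image followed by a composite of cofaces, using the generalized Reedy/EZ structure on $\PARSIMONIOUSBOX$ from Proposition \ref{prop:generalized.reedy} together with \LEMBoxInclusions{}.
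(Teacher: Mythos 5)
Your overall strategy coincides with the paper's: both directions reduce to monicity of the inclusions $\BOX[P_i]\hookrightarrow\cnerve P_i$, uniqueness is automatic, and the content is chasing $(\boxobj{n}/\POSETS)$- against $\POSETS^{[1]}$-isomorphisms in the two reductions. Up to the point where you start worrying, the argument is essentially the one in the paper.

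Where you go astray is in reading condition (2) as if the corestriction were onto the image $\phi(I)$. That is not the intended reading, and it cannot be: the proposition covers every $\BOX$ with $\DEL_1[\transpose]^*\subset\BOX\subset\POSETS$, including the Dedekind cube category $\DEL_1[\transpose,\coconnect_{\mins},\coconnect_{\pls},\diagonal]^*$. There the diagonal $\diagonal\colon\boxobj{1}\to\boxobj{2}$ is a $\BOX$-morphism, so (1) holds for $\phi=\diagonal$ by Yoneda, yet $\phi(\boxobj{1})=\{(0,0),(1,1)\}$ is not a Boolean interval. So the claim you flag as the ``main obstacle''---that $\phi(I)$ is a Boolean interval, to be proven via ``images of $\BOX$-morphisms are Boolean intervals''---is both false in this generality and unnecessary. (The auxiliary fact is also out of reach by the route you sketch: Proposition \ref{prop:generalized.reedy} and the EZ structure are established only for $\PARSIMONIOUSBOX$, not for the variants with diagonals that this proposition still allows.)

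The fix is to read ``corestricts'' in (2) in parallel with the definition of $\BOX[P]$: the corestriction is to \emph{some} Boolean interval $J$ containing the image, not to the image itself. With that reading, your $(1)\Rightarrow(2)$ is done the moment you produce the Boolean interval $J\supset\phi(I)$, the $\BOX$-morphism $g$, and the $\POSETS$-iso $j\colon J\cong\boxobj{m}$; conjugating by $\iota^{-1}$ and $j$ already exhibits the corestriction $I\to J$ of $\phi|_I$ as $\POSETS^{[1]}$-isomorphic to $g$. The further ``restricting $j$ to $\phi(I)$'' step should be deleted. Dually, in $(2)\Rightarrow(1)$ the witness for $\phi\alpha$ should be the Boolean interval $J'$ supplied by (2), not $\phi(I)$ (which need not be an interval); with that substitution the composite-with-permutation argument you give is exactly the paper's.
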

  \begin{proof}
    Suppose (1).
    Consider a finite Boolean interval $I$ in $P_1$.  
    Then $I$ is $\POSETS$-isomorphic to a $\BOX$-object by $\DEL_1[\transpose]^*$ wide in $\BOX$. 
    Therefore there exists a monotone injection of the form $\theta_1:\boxobj{n}\ra P_1$ with image $I$.  
	  Then $\theta_1\in\BOX[P_1]_n$ because the corestriction $\boxobj{n}\ra I$ of $\theta_1$ is $(\boxobj{n}/\POSETS)$-isomorphic to $\id_{\boxobj{n}}$.  
    And $\phi\theta_1$ corestricts to a monotone function $(\boxobj{n}/\POSETS)$-isomorphic to a $\BOX$-morphism $\theta_2$, by (1). 
	  Thus the restriction of $\phi$ to $I$, $\POSETS^{[1]}$-isomorphic to $\phi\theta_1$, corestricts to a monotone function $\POSETS^{[1]}$-isomorphic to $\theta_2$.   
    Thus (2).  

    Suppose (2).  
    Consider $\theta\in\BOX[P_1]_n$.  
    Then there exists a Boolean interval $I_1$ in $P_1$ and corestriction $\boxobj{n}\ra I_1$ of $\theta$ that is $(\boxobj{n}/\POSETS)$-isomorphic to a $\BOX$-morphism $\theta^*$ by definition of $\BOX[P_1]$.   
    There exists a Boolean interval $I_2$ in $P_2$ such that $\phi$ restricts and corestricts to a monotone function $I_1\ra I_2$ $\POSETS^{[1]}$-isomorphic to a $\BOX$-morphism $\phi^*$ by (2).  
   Therefore $\phi\theta$ corestricts to a monotone function $\boxobj{n}\ra I_2$$(\boxobj{n}/\POSETS)$-isomorphic to the $\BOX$-morphism $\phi^*\theta^*$.  
    Hence (1).  
\end{proof}

\begin{eg}
  A function $\phi:P_1\ra P_2$ between posets induces a cubical function 
  $$\BOX[P_1]\ra\BOX[P_2]$$
  precisely when $\phi$ is monotone and maps finite Boolean intervals into Boolean intervals if $\BOX=\DEL_1[\tau,\gamma_{\mins},\gamma_{\pls},\diagonal]^*$.
\end{eg}

\begin{eg}
  A function $\phi:P_1\ra P_2$ between posets induces a cubical function 
  $$\BOX[P_1]\ra\BOX[P_2]$$
  precisely when $\phi$ maps finite Boolean intervals onto Boolean intervals and defines \ldots
  \begin{enumerate}
    \item \ldots a monotone functions if $\BOX=\PARSIMONIOUSBOX$
    \item \ldots a join-semilattice homomorphisms if $\BOX=\DEL_1[\tau,\coconnect_{\pls}]^*$
    \item \ldots a meet-semilattice homomorphisms if $\BOX=\DEL_1[\tau,\coconnect_{\mins}]^*$
    \item \ldots a lattice homomorphisms if $\BOX=\DEL_1[\tau]^*$
  \end{enumerate}
\end{eg}

One of the main advantages of the choices $\BOX=\PARSIMONIOUSBOX,\DEL_1[\tau,\gamma_{\mins},\gamma_{\pls},\diagonal]^*$ is not mathematical but terminological: the functions $\phi:P_1\ra P_2$ between posets that induce cubical functions $\BOX[P_1]\ra\BOX[P_2]$ are concise to state in general precisely when $\BOX=\DEL_1[\tau,\gamma_{\mins},\gamma_{\pls},\diagonal]^*$ or $\BOX=\PARSIMONIOUSBOX$: as those monotone functions which map finite Boolean intervals \textit{into} or \textit{onto} Boolean intervals. 
And among those two choices, only the choice $\BOX=\PARSIMONIOUSBOX$ implies that the induced cubical functions $\BOX[P_1]\ra\BOX[P_2]$ always induce $1$-Lipschitz maps between uniquely geodesic \textit{$\ell_2$-realizations} [Examples \ref{eg:geometric.cubes} and \ref{eg:geometric.realizations}].  

\subsubsection{Subdivision}
Subdivided cubes are special cases of Boolean complexes, cubical sets of Boolean intervals in certain posets $(\boxobj{n})^{[k+1]}$.   
Our first step is to characterize when those cubical sets are in fact natural in $\BOX$-objects $\boxobj{n}$ . 

\begin{cor}
  \label{cor:sd}
  Suppose $\BOX$ is any of the following variants:
  $$\PARSIMONIOUSBOX,\BOX_1[\tau]^*,\BOX_1[\tau,\gamma_{\mins}]^*,\BOX_1[\tau,\gamma_{\pls}]^*,\BOX_1[\tau,\gamma_{\pls},\gamma_{\mins}]^*,\DEL_1[\tau,\gamma_{\mins},\gamma_{\pls},\diagonal]^*$$
  Consider the solid diagrams of the following forms
   \begin{equation*}
    \begin{tikzcd}
	    \BOX[(\boxobj{m})^{[k]}]\ar[d,hookrightarrow]\ar[dotted]{r}[above]{\BOX[\phi^{[k]}]} & \BOX[(\boxobj{n})^{[k]}]\ar[d,hookrightarrow]\\
	    \cnerve(\boxobj{m})^{[k]}\ar{r}[below]{\cnerve\phi^{[k]}} & \cnerve(\boxobj{n})^{[k]}
    \end{tikzcd}
    \quad
    \begin{tikzcd}
	    \BOX[(\boxobj{n})^{[k_2]}]\ar[d,hookrightarrow]\ar[dotted]{r}[above]{\BOX[(\boxobj{n})^{\delta}]} & \BOX[(\boxobj{n})^{[k_1]}]\ar[d,hookrightarrow]\\
	    \cnerve(\boxobj{m})^{[k_2]}\ar{r}[below]{\cnerve(\boxobj{n})^{\delta}} & \cnerve(\boxobj{n})^{[k_1]}
    \end{tikzcd}
  \end{equation*}
	where $\phi$ denotes a $\BOX$-morphism $\phi:\boxobj{m}\ra\boxobj{n}$ and $\delta$ denotes a $\DEL_{\pls}$-morphism.
  For each such solid diagram of either form, there exists a unique dotted cubical function making the entire diagram commute.
\end{cor}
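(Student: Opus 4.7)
The plan is to apply Proposition \ref{prop:boolean.functions} separately to each of the two monotone functions $\phi^{[k]}:(\boxobj{m})^{[k]}\ra(\boxobj{n})^{[k]}$ and $(\boxobj{n})^\delta:(\boxobj{n})^{[k_2]}\ra(\boxobj{n})^{[k_1]}$. In each case the task reduces to verifying condition (2) of that proposition: that the restriction of the map to every finite Boolean interval corestricts to a monotone function $\POSETS^{[1]}$-isomorphic to a $\BOX$-morphism. Monotonicity of both maps is automatic from functoriality of $(-)^{[k]}$ and naturality in $[k]$.

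First I would obtain a concrete description of the posets $(\boxobj{n})^{[k]}$ and their finite Boolean intervals. The jump-point assignment, sending a chain $x_0\leqslant\cdots\leqslant x_k$ in $\boxobj{n}$ to the $n$-tuple whose $i$-th entry records the least $l$ with $(x_l)_i=1$ (or $k+1$ if no such $l$ exists), gives an order anti-isomorphism of $(\boxobj{n})^{[k]}$ with the product lattice of $n$ copies of the chain $[k+1]$. Under this identification the finite Boolean intervals are precisely the product intervals $\prod_{i=1}^n J_i$ whose factors $J_i\subseteq[k+1]$ are either singletons or pairs of consecutive integers; each such product is itself a Boolean lattice whose dimension counts the pair factors.

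Second, I would verify the condition for $\phi^{[k]}$, which acts by post-composition with $\phi$ on chains. Fixing a finite Boolean interval $J$ in $(\boxobj{m})^{[k]}$ and unpacking coordinates, the restriction $\phi^{[k]}|_J$ corestricts into a finite Boolean interval of $(\boxobj{n})^{[k]}$. One then shows that this corestriction factors, up to coordinate permutations of source and target, as a tensor product of restrictions of the component functions $\sigma_{(i);n}\phi:\boxobj{m}\ra[1]$ for $1\leqslant i\leqslant n$ to finite Boolean intervals of $\boxobj{m}$. Each such factor is a $\BOX$-morphism because $\phi$ is, and since every listed variant of $\BOX$ is closed under tensor product and coordinate permutation, the tensor product is again a $\BOX$-morphism. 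Third, for $(\boxobj{n})^\delta$, the same identification turns $(\boxobj{n})^\delta$ into the coordinate-wise application of a single monotone map $[k_2+1]\ra[k_1+1]$ determined by $\delta$; restricted to a product of singleton and consecutive-pair factors, this is a tensor product of identities, cofaces, and codegeneracies, a $\DEL_1^*$-morphism and hence a $\BOX$-morphism for every variant listed.

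The main obstacle is the coordinate-wise bookkeeping in the second step: identifying the corestriction of $\phi^{[k]}|_J$ with a recognizable tensor product requires tracking, for each $i$, the minimal subsequence of coordinates of $J$ on which $\sigma_{(i);n}\phi$ depends nontrivially, which is essentially the data produced by the canonical form of Theorem \ref{thm:algebraic.characterization}. Once this bookkeeping is in place, the verification that each restricted factor is a $\BOX$-morphism is a direct appeal to the characterizations in Corollaries \ref{cor:join.characterization} and \ref{cor:meet.characterization} and Theorem \ref{thm:minimal.symmetric.monoidal.site}, together with the fact that the Dedekind cube category contains all monotone functions between finite Boolean lattices.
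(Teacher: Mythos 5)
Your overall strategy matches the paper's: both reduce the claim to condition (2) of Proposition~\ref{prop:boolean.functions}. Your jump-point encoding of $(\boxobj{m})^{[k]}$ as $[k+1]^m$ and the resulting description of its Boolean intervals as products of singletons and consecutive pairs is correct (and in fact more careful than the description in the paper's own proof, which appears to assume that every Boolean interval of $(\boxobj{m})^{[k]}$ has the form ``fix $\min I$ for positions $<j$, fix $\max I$ for positions $>j$, let position $j$ vary over $I$,'' a form that misses intervals whose jumps occur at different positions, such as $\{0,1\}\times\{1,2\}\subset[3]^2$). Your treatment of the second diagram (the $(\boxobj{n})^\delta$ case) is fine.

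The gap is the factorization claim for $\phi^{[k]}$: the corestriction of $\phi^{[k]}$ to a Boolean interval $J\subset(\boxobj{m})^{[k]}$ does \emph{not} in general factor, even up to coordinate permutation, as a tensor product of restrictions of the component maps $\sigma_{(i);n}\phi\colon\boxobj{m}\ra[1]$ to Boolean intervals of $\boxobj{m}$. Concretely, take $\phi=\coconnect_{\mins}\colon\boxobj{2}\ra[1]$ and $k=2$; under the jump-point identification $(\boxobj{2})^{[2]}\cong[3]^2$ and $[1]^{[2]}\cong[3]$, the map $\phi^{[2]}$ is $(a_1,a_2)\mapsto\max(a_1,a_2)$, and on $J=\{0,1\}\times\{1,2\}$ its corestriction is the second projection, i.e.\ a codegeneracy $\boxobj{2}\twoheadrightarrow[1]$. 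The restrictions of $\coconnect_{\mins}$ to Boolean intervals of $\boxobj{2}$ are $\coconnect_{\mins}$ itself, or identities/constants on edges and points; none is a codegeneracy, and $n=1$ leaves no tensor factors or permutations to absorb the difference. So the assertion ``each such factor is a $\BOX$-morphism because $\phi$ is'' rests on a factorization that does not exist. The idea is recoverable but needs to be restated: one should tensor-decompose at the level of $\phi^{[k]}$ (not $\phi$), using that $(-)^{[k]}$ preserves products so that $\phi^{[k]}|_J$ factors through restrictions of $(\sigma_{(i);n}\phi)^{[k]}$ to Boolean intervals of $[k+1]^m$; and then one needs a separate argument for the $n=1$ base case, namely that the image of such a restriction has diameter at most one in the chain $[k+1]$, so the corestriction lands in a Boolean interval of dimension $\leqslant 1$ and is automatically interval-preserving, while the semilattice-homomorphism conditions for the other variants are inherited because $(-)^{[k]}$ and restriction to sublattices preserve them. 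As written, those steps are not supplied, so the proposal has a genuine gap.
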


The proof uses the fact, proven elsewhere \cite[Proposition 3.12]{krishnan2023cubicalII}, that 
$$(\boxobj{n})^{\delta}:(\boxobj{n})^{[k_2]}\ra(\boxobj{n})^{k_1}$$ 
is a lattice homomorphism preserving Boolean intervals for each monotone injection $\delta:[k_1]\ra[k_2]$.

\begin{proof}
  Consider a monotone function $\phi:\boxobj{m}\ra\boxobj{n}$. 

  Uniqueness of the dotted cubical functions follows from the monicity of the right vertical cubical functions in each diagram. 

  For each $\DEL_{\pls}$-morphism $\delta:[k_1]\ra[k_2]$, $(\boxobj{n})^{\delta}$ is a lattice homomorphism preserving Boolean intervals [Proposition 3.12, \cite{krishnan2023cubicalII}] and hence its restriction to each Boolean interval corestricts to a monotone function $\POSETS^{[1]}$-isomorphic to a $\DEL_1[\tau]^*$-morphism.  
  Thus there exists a dotted cubical function making the right diagram commute. 

  Consider a monotone function $\phi$ as in the left diagram. 
  Consider a Boolean interval $F$ in $(\boxobj{m})^{[k]}$.
	It suffices to show that the restriction of $\phi^{[k]}$ to $F$ corestricts to a monotone function $\POSETS^{[1]}$-isomorphic to a $\BOX$-morphism if $\phi$ is a $\BOX$-morphism, for $\BOX=\PARSIMONIOUSBOX,\BOX_1[\tau]^*,\BOX_1[\gamma_{\mins},\tau]^*,\BOX_1[\gamma_{\pls},\tau]^*,\BOX_1[\gamma_{\pls},\gamma_{\mins},\tau]^*,\DEL_1[\tau,\gamma_{\mins},\gamma_{\pls},\diagonal]^*$ [Proposition \ref{prop:boolean.functions}].  

  There exists a Boolean interval $I$ in $\boxobj{m}$ and $j\in[k]$ such that $F$ is the set of all monotone functions $[k]\ra I$ sending $0\leqslant i<j$ to $\min\,I$, $j<i\leqslant k$ to $\max\,I$, and $j$ to an element in $I$.  
  Thus all the functions in $\phi^{[k]}(F)$, the composites of all functions in $F$ with $\phi$, is the set of all monotone functions $[k]\ra I$ sending $0\leqslant i<j$ to $\min\,\phi(I)$, $j<i\leqslant k$ to $\max\,\phi(I)$, and $j$ to an element in $\phi(I)$.  
  
  In particular, all the functions in $F$ attain the same value on all but possibly one element in $[k]$.  
  Thus all the functions in $\phi^{[k]}(F)$, the composite of all functions in $F$ with $\phi$, attain the same value on all but possibly one element in $[k]$.   
  Therefore $\phi^{[k]}(F)$ is a subset of a Boolean interval in $(\boxobj{n})^{[k]}$. 
  The case $\BOX=\DEL_1[\tau,\gamma_{\mins},\gamma_{\pls},\diagonal]^*$ follows.

  Suppose $\phi$ is a $\PARSIMONIOUSBOX$-morphism.  
  Then $\phi(I)$ is a Boolean interval in $\boxobj{n}$. 
  Therefore $\phi^{[k]}(F)$ is a Boolean interval in $(\boxobj{n})^{[k]}$ .
  The case $\BOX=\PARSIMONIOUSBOX$ follows.  

  Suppose $\phi$ is a $\DEL_1[\tau,\gamma_{\mins}]^*$-morphism.  
  Then $\phi$ is a join-semilattice homomorphism.  
  Thus $\phi^{[k]}$ is a join-semilattice homomorphism.  
  Hence the restriction of $\phi^{[k]}$ to $F$ corestricts to a join-semilattice homomorphism onto its image because its image $\phi^{[k]}(F)$is  a Boolean interval in $(\boxobj{n})^{[k]}$ and therefore inclusion  $\phi^{[k]}(F)\ira(\boxobj{n})^{[k]}$ is a join-semilattice homomorphism [Corollary \ref{cor:join.characterization}].  
  The case $\BOX=\DEL_1[\tau,\gamma_{\mins}]^*$ follows.
  
  Similarly the case $\BOX=\DEL_1[\tau,\gamma_{\pls}]^*$ follows.

  The cases $\BOX=\DEL_1[\tau,\gamma_{\mins},\gamma_{\pls}]^*$ and $\BOX=\DEL_1[\tau]^*=\DEL_1[\tau,\gamma_{\mins}]^*\cap\DEL_1[\tau,\gamma_{\pls}]^*$ therefore also follow.
\end{proof}

Thus in the case that $\BOX$ is any of the following variants:
$$\PARSIMONIOUSBOX,\BOX_1[\tau]^*,\BOX_1[\tau,\gamma_{\mins}]^*,\BOX_1[\tau,\gamma_{\pls}]^*,\DEL_1[\tau,\gamma_{\mins},\gamma_{\pls}]^*,\DEL_1[\tau,\gamma_{\mins},\gamma_{\pls},\diagonal]^*$$
the construction $\BOX[(-)^{[k]}]$ defines a functor in the diagram
$$\begin{tikzcd}
  \BOX\ar{r}[above]{\BOX[(-)^{[k]}]}\ar{d}[description]{\BOX[-]} & \hat\BOX\\
  \hat\BOX\ar[dotted]{ur}[description]{\sd_{k+1}}
\end{tikzcd}$$
and we can hence take $\sd_{k+1}$ to be the unique left Kan extension of the solid horizontal functor along the Yoneda embedding $\BOX[-]$ in the diagram, let $\epsilon$ denote the unique monoidal natural transformation 
$\epsilon:\sd_3\ra\id_{\CUBICALSETS}$ characterized by the rule 
$$\epsilon_{\BOX[1]}=\BOX[[1]^{0\mapsto 1:[0]\ra[2]}]:\BOX[[1]^{[2]}]\ra\BOX[1].$$

Context will make clear whether $\sd_{k+1}$ refers to the endofunctor on $\SIMPLICIALSETS$ or $\CUBICALSETS$ and thus whether $\epsilon$ refers to the natural transformation to $\id_{\SIMPLICIALSETS}$ or $\id_{\CUBICALSETS}$.  
The comptability between both functors and natural transformations [Lemma \ref{lem:tri}] justifies the abuse in notation to some extent.  

\begin{rem}
  Cubical subdivision can also be constructed when
  $$\BOX=\DEL_1^*,\DEL_1[\gamma_{\mins}]^*,\DEL_1[\gamma_{\pls}]^*,$$
  but under these definitions the subdivision of a cube cannot be described as the Boolean intervals of a poset without introducing extra structure on the poset.  
  However, in the above cases $\BOX$ is the free monoidal category with a prescribed unit generated by a finite subcategory and hence the functoriality of subdivision on representables can be checked by hand.
  In the general case where $\BOX$ has as its objects the finite Boolean lattices but is neither symmetric monoidal nor freely generated by a finite subcategory, the verification that cubical subdivision is functorial on representables is more difficult to verify.  
\end{rem}

A special feature of $\PARSIMONIOUSBOX$ and some subvariants, unlike variants containing the diagonals, are properties of cubical subdivision $\sd_3$ given by the following pair of lemmas.
These properties have been previously given for $\BOX=\DEL_1^*$ \cite{krishnan2015cubical}, $\BOX=\DEL_1^*[\tau]$ \cite{krishnan2023cubicalII}, and $\BOX=\DEL_1^*[\gamma_{\mins}]^*$ \cite{krishnan2022uniform}.
Proofs mimic earlier proofs but are given to illustrate how $\PARSIMONIOUSBOX$ is the largest variant of $\BOX$ for which the properties hold.  

\begin{lem}
  \label{lem:collapse.star}
  Suppose $\BOX$ is one of the following categories:
  $$\PARSIMONIOUSBOX,\DEL_1[\tau]^*,\DEL_1[\gamma_{\mins},\tau]^*,\DEL_1[\gamma_{\pls},\tau]^*,\DEL_1[\gamma_{\mins},\gamma_{\pls},\tau]^*.$$  
  Fix a cubical set $C$.
  The following statements all hold.
  \begin{enumerate}
    \item For each $v\in(\sd_3C)_{0}$, $\epsilon_C(\Star_{\sd_3C})(v)\subset\support_{\sd_3}(v,C).$
    \item For each non-empty subpresheaf $A$ of an atomic subpresheaf of $\sd_3C$, there exist \ldots
      \begin{enumerate}
        \item unique minimal subpresheaf $B\subset C$ with $A\cap\sd_3B\neq\varnothing$; and
	\item unique retraction $\mu:A\ra A\cap\sd_3B$ induced by $\DEL_1[\tau]^*$-morphisms
      \end{enumerate}
      Moreover, $A\cap\sd_3B$ is isomorphic to a representable if $A$ is atomic.
      And $\epsilon_C(A\ira\sd_3C)=(A\cap\sd_3B\ira\sd_3C)\mu$.
  \end{enumerate}
\end{lem}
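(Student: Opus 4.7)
The strategy is to reduce to the case $C=\BOX[\boxobj{n}]$ representable, where Corollary~\ref{cor:sd} identifies $\sd_3\BOX[\boxobj{n}]$ with $\BOX[(\boxobj{n})^{[2]}]$.  Under this identification, atomic subpresheaves of $\sd_3\BOX[\boxobj{n}]$ correspond to Boolean intervals $I\subseteq(\boxobj{n})^{[2]}$, whose vertices are monotone triples $(a,b,c)$ with $a\leqslant b\leqslant c$ in $\boxobj{n}$, and the component $\epsilon_{\BOX[\boxobj{n}]}$ is the cubical function $(a,b,c)\mapsto b$ coming from $0\mapsto 1:[0]\ra[2]$.  Subpresheaves $B\subseteq\BOX[\boxobj{n}]$ are likewise controlled by Boolean intervals $J\subseteq\boxobj{n}$, with $\sd_3\BOX[J]$ sitting inside $\sd_3\BOX[\boxobj{n}]$ as $\BOX[(J)^{[2]}]$.

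For (2) I would first take $A$ atomic, corresponding to a Boolean interval $I\subseteq(\boxobj{n})^{[2]}$.  The plan is to define $J$ as the smallest Boolean interval in $\boxobj{n}$ among the $[a,c]$ arising as $(a,b,c)$ varies over $I$; using the Boolean structure of $I$ inside the distributive lattice $(\boxobj{n})^{[2]}$, this minimum is attained uniquely, so $J$ is well-defined, and $A\cap\sd_3\BOX[J]$ is the Boolean subinterval of $I$ consisting of those triples with all three coordinates in $J$, hence representable.  The retraction $\mu$ is the coordinate-wise lattice projection onto this subinterval, collapsing exactly the dimensions along which $I$ extends beyond $(J)^{[2]}$; by Yoneda it corresponds to a composite of codegeneracies with coordinate transpositions, that is, a $\DEL_1[\tau]^*$-morphism.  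The identity $\epsilon_C|_A=\epsilon_C|_{A\cap\sd_3 B}\circ\mu$ then reduces to the observation that the middle coordinate $b$ is preserved by this projection once the outer coordinates have been pulled into $J$.  For a non-atomic $A$, I would cover $A$ by its atomic subpresheaves, apply the atomic case to each, and glue the resulting retractions using uniqueness of $B$ and $\mu$ on overlaps.

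Claim (1) follows from (2).  Given $v\in(\sd_3 C)_0$, each atomic subpresheaf of $\Star_{\sd_3 C}(v)$ carries an associated minimal $B\subseteq C$ whose subdivision meets it through $v$, so $B\subseteq\support_{\sd_3}(v,C)$; factoring $\epsilon_C$ through $\mu$ then places $\epsilon_C(\Star_{\sd_3 C}(v))$ inside $\support_{\sd_3}(v,C)$.

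The main obstacle is the lattice-theoretic rigidity needed for uniqueness of $J$: a~priori different vertices $(a,b,c)\in I$ produce incomparable candidate intervals $[a,c]\subseteq\boxobj{n}$, and one must exploit both the Boolean shape of $I$ and the constraints on $\BOX$ to pin down a unique minimum.  This is precisely where the hypothesis on $\BOX$ enters: the presence of $\tau$ together with the absence of diagonals [Lemma~\ref{lem:diagonals}] forces Boolean intervals in $(\boxobj{n})^{[2]}$ into the product form needed for the minimizer to be unique and lets the projection $\mu$ be realized by $\DEL_1[\tau]^*$-morphisms rather than by more general monotone maps available only in larger variants.
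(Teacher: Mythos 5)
The key move you are missing is the paper's further reduction to the one-dimensional representable. After reducing to $C$ atomic (by minimality) and then to $C$ representable (by naturality), the paper observes that both $\sd_3$ and the closed-star construction commute with tensor products, and therefore it suffices to prove (1), (2a), (2b), and the last displayed equality only for $C=\BOX[1]$, where $A\cong\BOX[0]$ or $A\cong\BOX[1]$ and everything follows by inspecting the four vertices and three edges of $\sd_3\BOX[1]$. This sidesteps entirely the lattice-theoretic question you flag at the end of your writeup.

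Your plan instead stays at a general representable $\BOX\boxobj{n}$, writes $\sd_3\BOX\boxobj{n}=\BOX[(\boxobj{n})^{[2]}]$, and proposes to take $J$ as a smallest interval $[a,c]$ over the vertices $(a,b,c)$ of a Boolean interval $I\subseteq(\boxobj{n})^{[2]}$. You are candid that the uniqueness of this minimum is ``the main obstacle,'' but you do not close that gap; the invocation of ``the Boolean structure of $I$'' is not an argument. That uniqueness is true, but proving it directly amounts to the kind of case analysis on intervals in $[3]^n$ that the tensor-product reduction renders unnecessary. So the proposal is genuinely incomplete at its crux, not merely terse. If you want to salvage the direct route, the fact you need is precisely that the assignments ``minimal $J$'' and ``retraction $\mu$'' respect tensor decompositions $I=I_1\otimes\cdots\otimes I_n$ of Boolean intervals in $[3]^n$ — but once you have established that compatibility, you are doing exactly what the paper does, and you may as well reduce to $n=1$ explicitly.

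A secondary omission: the paper does not handle representability of $A\cap\sd_3 B$ by the inspection argument; it needs a separate argument for general atomic $A$ (not contained in a representable). It uses $A\cap\sd_3\partial B=\varnothing$ by minimality of $B$ to see that the quotient $\sd_3(\BOX\boxobj{\dim B}\epi B)$ restricts to a monomorphism onto a subpresheaf containing $A\cap\sd_3 B$, whence $A\cap\sd_3 B$ embeds in $\sd_3\BOX\boxobj{\dim B}$ and is representable by minimality. Your proposal only treats $C$ representable and so never confronts the case where $B$ is a non-representable atomic quotient; the reduction ``to $C$ atomic by minimality'' that both you and the paper invoke does not reduce representability of $A\cap\sd_3 B$ to the representable case, because that part of the statement is not a statement about the factorization of $\epsilon_C$. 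This is why the paper separates the proof into two blocks. Finally, the paper proves (1) simultaneously with (2) by the same reduction rather than deducing (1) from (2); your derivation of (1) from (2) is plausible but is an extra step the paper does not take.
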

\begin{proof}
  It suffices to take $C$ atomic by minimality. 

  \vspace{.1in}
  \textit{(1), (2a), (2b) and the last equality:}
  It suffices to take $C$ representable by naturality.  
  Atomic subpresheaves of representables are representables themselves because images of $\BOX$-morphisms are intervals.  
  Therefore $A$ is representable.  

  It further suffices to consider the case $C=\BOX[1]$ and hence $A\cong\BOX[0]$ or $A\cong\BOX[1]$ because closed stars and $\sd$ commute with tensor products.  
  In that case, (1), (2a), (2b) and the last equality in (2) follow from inspection.

  \vspace{.1in}
  \textit{representability of $A\cap\sd_3B$}
  Note $A\cap\sd_3\partial B=\varnothing$ by minimality of $B$.  
  Thus there exists a monic restriction of the quotient cubical function $\sd_3(\BOX\boxobj{\dim\,B}\ra B)$ to a subpresheaf of $\BOX\boxobj{\dim\,B}$ whose image contains $A\cap\sd_3B$.  
  Thus $A\cap\sd_3B$ is isomorphic to a subpresheaf of $\sd_3\BOX\boxobj{\dim\,B}$ and hence is a representable by minimality.
\end{proof}

\begin{lem}
  \label{lem:retractions}
  Consider the solid commutative diagram
	\begin{equation*}
          \begin{tikzcd}
		  A'\ar[r]\ar[d] & A''\ar[d]\\
	    A'\cap\sd_3B'\ar[r,dotted] & A''\cap\sd_3B''
	  \end{tikzcd}
	\end{equation*}
	where $A',A''$ are respectively atomic subpresheaves of $\sd_3C',\sd_3C''$ and $B',B''$ are the minimal subpresheaves of $C',C''$ satisfying $A'\cap\sd_3B'\neq\varnothing$, $A''\cap\sd_3B''\neq\varnothing$ and the vertical arrows are retractions.  
	There exists a unique dotted cubical function making the entire diagram commute.  
\end{lem}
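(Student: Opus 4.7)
The argument splits into uniqueness and existence. For uniqueness, the retraction $\mu'$ is a split epimorphism in $\hat\BOX$ and hence an epimorphism, so any cubical function $g:A'\cap\sd_3B'\to A''\cap\sd_3B''$ satisfying $g\mu'=\mu''f$ is completely determined by the equation.

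For existence, I would first appeal to Lemma~\ref{lem:collapse.star}(2) to replace $A'\cap\sd_3B'$ and $A''\cap\sd_3B''$ with the representables they are isomorphic to, and to recognize $\mu'$ and $\mu''$ as the unique retractions onto them induced by $\DEL_1[\tau]^*$-morphisms. Writing $\iota':A'\cap\sd_3B'\hookrightarrow A'$ and $\iota'':A''\cap\sd_3B''\hookrightarrow A''$ for the inclusions (which are sections of $\mu'$ and $\mu''$), I would define the candidate
\[
g\;:=\;\mu''\circ f\circ\iota'\colon\;A'\cap\sd_3B'\;\ra\;A''\cap\sd_3B''.
\]
The remaining obligation is to verify $g\mu'=\mu''f$, which is equivalent to showing $(\mu''f)\circ(\iota'\mu')=\mu''f$, i.e.\ that $\mu''f$ is invariant under the idempotent endomorphism $\iota'\mu'$ of $A'$.

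To establish this invariance, I would apply Lemma~\ref{lem:collapse.star}(2) to the subpresheaf $f(A')\subset A''$ of $\sd_3C''$ to obtain a unique minimal $B^{\star}\subset C''$ with $f(A')\cap\sd_3B^{\star}\neq\varnothing$ and a unique $\DEL_1[\tau]^*$-induced retraction $\nu:f(A')\to f(A')\cap\sd_3B^{\star}$. Since $f(A')\subset A''$, minimality forces $B^{\star}\subset B''$, and the uniqueness clause in Lemma~\ref{lem:collapse.star}(2b) identifies $\nu$ with the corestriction of $\mu''\circ(f(A')\hookrightarrow A'')$. The characterizing factorization $\epsilon_{C'}|_{A'}=\epsilon_{C'}\circ\iota'\circ\mu'$ from Lemma~\ref{lem:collapse.star}(2) then yields that $\mu''\circ f$ and $\mu''\circ f\circ\iota'\mu'$ agree after composition with the mono $\iota''$ up to the identification via $\nu$, which combined with $\iota''$ being monic yields the desired equation.

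The main obstacle I expect is the coherence step relating the intrinsic retraction $\nu$ of $f(A')$ to the restriction of the ambient retraction $\mu''$. This demands care because $f$ is only given as a map of atomic subpresheaves of $\sd_3C'$ and $\sd_3C''$---not as a map $C'\to C''$---so we cannot invoke naturality of $\epsilon$ directly. The resolution should go through the uniqueness part of Lemma~\ref{lem:collapse.star}(2b) together with the explicit description of atomic subpresheaves of $\sd_3C$ in terms of Boolean intervals in posets $(\boxobj{n})^{[3]}$ afforded by Corollary~\ref{cor:sd}, reducing the matching to a check on representing cubes where both retractions are built from cofaces and coordinate permutations.
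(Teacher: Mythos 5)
The paper's own proof reduces to the one--dimensional case in three steps (atomic $\mapsto$ representable $\mapsto$ $\BOX[1]$, using minimality, naturality, and monoidality of $\sd_3$) and then finishes by inspection of the handful of atomic subpresheaves of $\sd_3\BOX[1]$. Your argument is structurally different: you propose to write down $g=\mu''\circ f\circ\iota'$ directly and verify the commutativity $g\mu'=\mu''f$ by an auxiliary application of Lemma~\ref{lem:collapse.star}(2) to the subpresheaf $f(A')\subset A''$. The uniqueness part of your argument is fine ($\mu'$ is epi), and the candidate formula for $g$ is the right one. But the verification step has a genuine gap.

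The specific problem is the sentence ``Since $f(A')\subset A''$, minimality forces $B^{\star}\subset B''$.'' The minimality argument runs in the opposite direction: since $f(A')\subset A''$, any $B$ with $f(A')\cap\sd_3 B\neq\varnothing$ also satisfies $A''\cap\sd_3 B\neq\varnothing$, and minimality of $B''$ gives $B''\subset B^\star$, not $B^\star\subset B''$. The inclusion can be strict, so the two retractions $\nu:f(A')\ra f(A')\cap\sd_3 B^\star$ and (a hoped-for corestriction of) $\mu''$ need not agree. Separately, for the phrase ``the corestriction of $\mu''\circ(f(A')\hookrightarrow A'')$'' to even make sense as a retraction of $f(A')$, you would need $\mu''(f(A'))\subset f(A')$, which is nowhere established: $\mu''$ maps $A''$ into $A''\cap\sd_3 B''$, and there is no reason that image lands inside $f(A')$. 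You anticipate the difficulty in your last paragraph, but the resolution you sketch (invoking uniqueness from Lemma~\ref{lem:collapse.star}(2b) plus Corollary~\ref{cor:sd}) does not close it, precisely because the two retractions live over potentially different bases $B^\star\supsetneq B''$ and the corestriction is not defined. The paper's reduction to $C'=C''=\BOX[1]$ sidesteps all of this by making both $\mu'$ and $\mu''$ and the handful of possible $f$'s explicit; if you want to keep your more structural route, you should first prove that the retraction $\iota'\mu'$ of $A'$ is absorbed by $\mu''f$, i.e.\ $\mu''f\iota'\mu'=\mu''f$, and that requires identifying $B^\star$ with $B''$ (or otherwise controlling the discrepancy), which your current argument does not do.
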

\begin{proof}
  It suffices to take $C'$ and $C''$ atomic by minimality.  
  It suffices to take the cases $C'$ and $C''$ representable by naturality.  
  It further suffices to take the cases $C'$ and $C''$ isomorphic to $\BOX[1]$ because $\sd$ commutes with tensor products.  
  In that case, the lemma follows from inspection.
\end{proof}

\begin{prop}
  \label{prop:local.lifts}
  Suppose $\BOX$ is one of the following categories:
  $$\PARSIMONIOUSBOX,\DEL_1[\tau]^*,\DEL_1[\gamma_{\mins},\tau]^*,\DEL_1[\gamma_{\pls},\tau]^*,\DEL_1[\gamma_{\mins},\gamma_{\pls},\tau]^*.$$  
  There exist natural number $n_{(C,S)}$ and dotted cubical functions in
  \begin{equation*}
    \begin{tikzcd}
	        S\ar[r,dotted]\ar[d,hookrightarrow] & \BOX\boxobj{n_{(C,S)}}\ar[d,dotted]\\
	\sd_9C\ar{r}[below]{\epsilon^2_C} & C
	\end{tikzcd}
  \end{equation*}
   natural in objects $S\ira\sd_9C$ in the full subcategory of $\CUBICALSETS/\sd_9$ whose objects are inclusions $S\ira\sd_9C$ of non-empty subpresheaves $S$ of closed stars of vertices in cubical sets $C$, making the diagram commute.
\end{prop}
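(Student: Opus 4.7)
The plan is to construct the dotted lifts by iterating the local collapse structure of Lemma~\ref{lem:collapse.star} twice, mirroring the factorization $\epsilon^2_C = \epsilon_C \circ \epsilon_{\sd_3 C}\colon \sd_9 C = \sd_3(\sd_3 C) \to \sd_3 C \to C$.

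First, pick a vertex $v \in (\sd_9 C)_0$ with $S \subset \Star_{\sd_9 C}(v)$. Applying Lemma~\ref{lem:collapse.star}(1) with $\sd_3 C$ in place of $C$ and $v$ viewed as a vertex of $\sd_3(\sd_3 C)$ gives $\epsilon_{\sd_3 C}(S) \subset A_1$, where $A_1 := \support_{\sd_3}(v, \sd_3 C)$ is an atomic subpresheaf of $\sd_3 C$, hence a representable under the stated hypotheses on $\BOX$. So $\epsilon_{\sd_3 C}$ restricts to a cubical function $\tilde{\epsilon}_1\colon S \to A_1$.

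Second, apply Lemma~\ref{lem:collapse.star}(2) to $A_1$ itself, which qualifies as a non-empty subpresheaf of an atomic subpresheaf of $\sd_3 C$. This produces the unique minimal $B \subset C$ with $A_2 := A_1 \cap \sd_3 B \neq \varnothing$, the fact that $A_2$ is representable, and the unique retraction $\mu\colon A_1 \to A_2$; set $n_{(C,S)}$ so that $A_2 \cong \BOX\boxobj{n_{(C,S)}}$. Take the top dotted arrow to be $\mu \circ \tilde{\epsilon}_1\colon S \to A_2 \cong \BOX\boxobj{n_{(C,S)}}$, and the right dotted arrow to be the composite $\BOX\boxobj{n_{(C,S)}} \cong A_2 \hookrightarrow \sd_3 B \xrightarrow{\epsilon_C|_{\sd_3 B}} B \hookrightarrow C$. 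Commutativity of the square is then immediate from the last clause of Lemma~\ref{lem:collapse.star}(2), which packages exactly the identity $\epsilon_C \circ (A_1 \hookrightarrow \sd_3 C) = (A_2 \hookrightarrow \sd_3 C) \circ \mu$ composed with $\epsilon_C$.

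For naturality across a morphism $(f\colon S_1 \to S_2,\, g\colon C_1 \to C_2)$ in the given comma subcategory, the uniqueness of the minimal $B$ and the retraction $\mu$ in Lemma~\ref{lem:collapse.star}(2), combined with the unique comparison cubical function supplied by Lemma~\ref{lem:retractions}, furnishes a canonical cubical function $\BOX\boxobj{n_{(C_1,S_1)}} \to \BOX\boxobj{n_{(C_2,S_2)}}$ intertwining both dotted arrows on either side. The main obstacle I anticipate is that a given $S$ may lie in the closed star of several vertices and that the morphism $(f,g)$ need not transport a chosen $v_1$ to a chosen $v_2$, so the construction must be shown to depend only on the pair $(C,S)$ and not on the auxiliary $v$. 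I expect to resolve this by verifying that $A_1 = \support_{\sd_3}(v, \sd_3 C)$ is in fact determined by $\epsilon_{\sd_3 C}(S)$ (since $S$ is non-empty, its image pins down a single atomic cube of $\sd_3 C$ under the collapse), at which point the remaining naturality is a direct diagram chase invoking Lemma~\ref{lem:retractions} and the naturality of $\epsilon$ in~$C$.
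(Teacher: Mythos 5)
Your proposal follows essentially the same blueprint as the paper's proof: factor $\epsilon^2_C = \epsilon_C \circ \epsilon_{\sd_3 C}$, use Lemma~\ref{lem:collapse.star}(1) to corestrict $\epsilon_{\sd_3 C}|_S$ into an atomic subpresheaf of $\sd_3 C$, invoke Lemma~\ref{lem:collapse.star}(2) for the representable, the retraction, and commutativity, and reach for Lemma~\ref{lem:retractions} for naturality. But two gaps remain. First, you apply Lemma~\ref{lem:collapse.star}(2) directly to $A_1 = \support_{\sd_3}(v, \sd_3 C)$ and propose to rescue naturality by showing $A_1$ itself is determined by $\epsilon_{\sd_3 C}(S)$. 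That is too strong and false in general: a non-empty subpresheaf of $\sd_3 C$ (say a single vertex) can lie in several atomic subpresheaves, so $A_1$ legitimately depends on the auxiliary $v$, and the minimal $B$ produced by (2a) for $A_1$ may also depend on $v$. The paper avoids this by applying (2a) to the smaller subpresheaf $\epsilon_{\sd_3 C}(S)$ to obtain $C_S$ (thus pinned down by $S$ alone), taking $A_{(C,S)}$ to be a minimal atomic containing $\epsilon_{\sd_3 C}(S)$, and then arguing that $B_{(C,S)} = A_{(C,S)} \cap \sd_3 C_S$ and the retraction onto it are uniquely determined by $\epsilon_{\sd_3 C}(S)$ regardless of the choice of $A_{(C,S)}$. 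The well-definedness claim must live at the level of $B_{(C,S)}$ and the composite $S\to B_{(C,S)}$, not at the level of the ambient atomic.

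Second, the naturality verification is where the real work lies, and "a direct diagram chase invoking Lemma~\ref{lem:retractions}" elides it. The paper splits the commutative square into a left and right half, observes the left is natural essentially by construction, and for the right builds the comparison map on the $B_{(C,S)}$'s from Lemma~\ref{lem:retractions}, verifies the triangles supplied by the factorization (\ref{eqn:bamfl.factorization}), and then crucially uses that the retractions $\pi_{(C,S)}$ are epi to transfer commutativity of the outer rectangle to the remaining trapezoid. That epi argument has to be made explicit; it is not a formality. A final minor point: you assert that the atomic $A_1$ is representable "under the stated hypotheses," but atomicity does not give representability for subpresheaves of an arbitrary $\sd_3 C$; Lemma~\ref{lem:collapse.star}(2) only guarantees representability of $A \cap \sd_3 B$, which fortunately is all you use.
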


A proof for the exact same lemma, specialized for the case $\BOX=\DEL_1[\tau]^*$, is already given elsewhere \cite{krishnan2023cubicalII}.  
We include the following proof for completeness. 

\begin{proof}
  The cubical set $\epsilon_{\sd_3C}(S)$ is a non-empty subpresheaf of an atomic subpresheaf of $C$ [Lemma \ref{lem:collapse.star}]. 
  Let $A_{(C,S)}$ denote a choice of minimal atomic subpresheaf of $C$ containing $\epsilon_{\sd_3C}(S)$.
  There exists a unique minimal atomic subpresheaf $C_{S}\subset C$ with $\epsilon_{\sd_3C}(S)_{(C,S)}\cap\sd_3C_{S}\neq\varnothing$.  
  Note that $C_S$ is the unique minimal subpresheaf of $C$ with $A_{(C,S)}\cap\sd_3C_{S}\neq\varnothing$ and $B_{(C,S)}=A_{(C,S)}\cap\sd_3C_{S}$ is uniquely determined by $\epsilon_{\sd_3C}(S)$ by minimality.
  There exists a retraction $\pi_{(C,S)}$ making the right square in
  \begin{equation}
	  \label{eqn:bamfl.factorization}
      \begin{tikzcd}
	      S\ar[r,dotted]\ar[d,hookrightarrow] & A_{(C,S)}\ar[dotted]{r}[above]{\pi_{(C,S)}}\ar[d,hookrightarrow] & B_{(C,S)}\ar{d}[right]{\epsilon_C(B_{(C,S)}\ira\sd_3C)}\\
	      \sd_9C\ar{r}[below]{\epsilon_{\sd_3C}} & \sd_3C\ar{r}[below]{\epsilon_C} & C
	\end{tikzcd}
  \end{equation}
  commute [Lemma \ref{lem:collapse.star}].  
  There exists a dotted cubical function, unique by the middle arrow monic, making the left square commute by definition of $A_{(C,S)}$.  
  The cubical set $B_{(C,S)}$ is isomorphic to a representable [Lemma \ref{lem:collapse.star}]. 
  The left square above is natural in $(C,S)$.
  It therefore suffices to show that the right square above is natural in $(C,S)$.
  To that end, consider the $\STARS_9$-morphism defined by the left of the diagrams
  \begin{equation*}
    \begin{tikzcd}
	        S'\ar[dd,hookrightarrow]\ar{r}[above]{\alpha} & S''\ar[dd,hookrightarrow] &
	          A_{(C',S')}
		  \ar[d,hookrightarrow]
		  \ar{rrr}[above]{\support_{\sd_3}(\alpha,\beta)}
		  \ar{dr}[description]{\pi_{(C',A')}}
		& 
		& 
		& A_{(C'',S'')}
		  \ar[d,hookrightarrow]
		  \ar{dl}[description]{\pi_{(C'',A'')}}
		\\
		& &
		  \sd_3C'
		  \ar{d}[left]{\epsilon_{C'}}
		& B_{(C',S')}
		  \ar[r,dotted]
		  \ar{dl}[description]{\epsilon_{(C',S')}}
		& B_{(C'',S'')}
		  \ar{dr}[description]{\epsilon_{(C'',S'')}}
		& \sd_3C''
		  \ar{d}[right]{\epsilon_{C''}}
		\\
		\sd_9C'\ar{r}[below]{\sd_9\beta} & \sd_9C'' &
		  C'\ar{rrr}[below]{\beta}
		& 
		& 
		& C''
	\end{tikzcd}
  \end{equation*}

  Let $\epsilon_{(C,S)}$ denote $\epsilon_C(B_{(C,S)}\ira\sd_3C)$.
  Consider the right diagram.  
  There exists a unique dotted cubical function making the upper trapezoid commute [Lemma \ref{lem:retractions}].  
  The triangles commute by (\ref{eqn:bamfl.factorization}) commutative.  
  The lower trapezoid commutes because the outer rectangle commutes and the cubical functions of the form $\pi_{(C,A)}$ are epi.
  The desired naturality of the right square in (\ref{eqn:bamfl.factorization}) follows.
\end{proof}

\subsubsection{Nonpositive metric curvature}\label{sec:cat0.cubical.complexes}
The CAT(0) condition, a metric non-positive curvature condition on metric spaces, can be combinatorially characterized on cubical complexes equipped with their $\ell_2$-metrics.  
The cubical CAT(0) condition is useful in homotopy theory, where it is used to prove an equivariant Kan-Thurston Theorem \cite[Theorem 8.3]{leary2013metric} and give a sufficient local criterion for fundamental categories to embed into fundamental groupoids \cite{goubault2020directed}. 
Cubical subdivisions of representables are special cases of more general finite CAT(0) cubical complexes from finite distributive lattices and even more general finite \textit{distributive meet-semilattices}. 
We briefly sketch a construction of all finite CAT(0) cubical complexes as special cases of our nerve-like construction $\BOX[P]$, essentially due to observations made elsewhere [\cite[Theorem 2.5]{ardila2012geodesics} and \cite[Propositions 5.7, 5.8]{gonzalez2021finite}].

Take a meet-semilattice $L$ to be \textit{distributive} (c.f. \cite{gonzalez2021finite}) if given the data of \ldots
\begin{enumerate}
  \item \ldots a positive integer $n$ \ldots
  \item \ldots and $x_1,x_2,\ldots,x_n\in L$ whose supremum $x_1\vee_Lx_2\vee_L\cdots\vee_Lx_n$  exists in $L$
  \item \ldots and $y\in L$
\end{enumerate}
the supremum $(x_1\wedge_Ly)\vee_L(x_2\wedge_Ly)\vee_L\cdots\vee_L(x_n\wedge_Ly)$ exists in $L$ and 
$$(x_1\wedge_Ly)\vee_L(x_2\wedge_Ly)\vee_L\cdots\vee_L(x_n\wedge_Ly)=(x_1\vee_Lx_2\vee_L\cdots\vee_Lx_n)\wedge_Ly.$$

\begin{eg}
  A lattice is distributive if and only if it is distributive as a meet-semilattice.
\end{eg}

Suppose $\BOX$ lies in a chain of subcategories
$$\DEL_1[\tau]^*\subset\BOX\subset\POSETS$$
with $\DEL_1[\tau]^*$ wide in $\BOX$ and $\BOX$ EZ.  
Cubical sets of the form $\BOX[L]$ for all finite distributive meet-semilattices $L$ define all abstract finite CAT(0) cubical complexes when the posets are all finite distributive meet-semilattices, by a combination of observations made elsewhere [\cite[Theorem 2.5]{ardila2012geodesics} and \cite[Propositions 5.7, 5.8]{gonzalez2021finite}].
In fact, the edge-orientations on $\BOX[L]$ correspond exactly to a geodesic reachability relation: $x\leqslant_Ly$ if and only if every there is a geodesic on the $1$-skeleton of $|\BOX[L]|$, equipped with the geodesic metric defined by Euclidean distance on topological edges, from $\min\,L$ to $y$ that passes through $x$ \cite{ardila2012geodesics}.  
In particular for a finite distributive meet-semilattice $L$, the \textit{$\ell_2$-realization} of $\PARSIMONIOUSBOX[L]$ is a uniquely geodesic metric space.  

\subsection{Comparisons}
Let $\tri_{\shape{1}}$ denote the \textit{triangulation} functor
$$\tri_{\shape{1}}=(\snerve(\shape{1}\ira\CATS))_!:\hat{\shape{1}}\ra\hat\DEL.$$
for each small subcategory $\shape{1}$ of $\CATS$.
Triangulation is compatible with subdivision.  

\begin{lem}
  \label{lem:tri}
  Suppose $\BOX$ is one of the following variants:
$$\PARSIMONIOUSBOX,\BOX_1[\tau]^*,\BOX_1[\tau,\gamma_{\mins}]^*,\BOX_1[\tau,\gamma_{\pls}]^*,\BOX_1[\tau,\gamma_{\pls},\gamma_{\mins}]^*,\DEL_1[\tau,\gamma_{\mins},\gamma_{\pls},\diagonal]^*$$
  For each $k$, the following commutes up to natural monoidal isomorphism.
	\begin{equation*}
          \begin{tikzcd}
		  \CUBICALSETS\ar{r}[above]{\sd_{k+1}}\ar{d}[left]{\tri_\BOX}
 & \CUBICALSETS\ar{d}[right]{\tri_\BOX} \\
		  \SIMPLICIALSETS\ar{r}[below]{\sd_{k+1}} & \SIMPLICIALSETS
	  \end{tikzcd}
	\end{equation*}
   Along this natural isomorphism, we can make the identification $\tri_\BOX\eta_C=\eta_{\tri_\BOX C}$ natural in cubical sets $C$.
\end{lem}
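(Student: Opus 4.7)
The plan is to pass to representables by cocontinuity, then to the single generator $[1]$ by monoidality, and finally verify a concrete identification of $1$-dimensional subdivided cubes with subdivided $1$-simplices. First, observe that both composites $\tri_\BOX\sd_{k+1}$ and $\sd_{k+1}\tri_\BOX$ are cocontinuous functors $\CUBICALSETS\ra\SIMPLICIALSETS$: cubical $\sd_{k+1}$ is the left Kan extension of $\BOX[(-)^{[k]}]$ along the Yoneda embedding; $\tri_\BOX$ is a left Kan extension along the Yoneda embedding; and simplicial $\sd_{k+1}=\SETS^{\OP{(-)^{\oplus(k+1)}}}$ is precomposition, so it preserves all colimits. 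Hence it suffices to produce a natural monoidal isomorphism between the two restrictions $\BOX\ra\SIMPLICIALSETS$, which by universality extends uniquely to the desired isomorphism on all of $\CUBICALSETS$.

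Next, both restrictions are strong monoidal with respect to $\otimes$ on $\BOX$ and Cartesian product on $\SIMPLICIALSETS$. For the top-right composite, $(\boxobj{n_1+n_2})^{[k]}=(\boxobj{n_1})^{[k]}\times(\boxobj{n_2})^{[k]}$ and Boolean intervals in a product factor as products of Boolean intervals, so $\BOX[(\boxobj{n_1+n_2})^{[k]}]\cong\BOX[(\boxobj{n_1})^{[k]}]\otimes\BOX[(\boxobj{n_2})^{[k]}]$ (this uses the list of allowed variants of $\BOX$, via Theorem \ref{thm:algebraic.characterization} and Corollaries \ref{cor:join.characterization}--\ref{cor:meet.characterization}, to ensure the corestrictions onto product Boolean intervals are again $\BOX$-morphisms); then $\tri_\BOX$ sends $\otimes$ to $\times$. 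For the left-bottom composite, $\tri_\BOX\BOX\boxobj{n}=\snerve\boxobj{n}=(\snerve[1])^n$, and simplicial $\sd_{k+1}$ preserves finite products dimensionwise because evaluation at any $[n]^{\oplus(k+1)}$ does. By strong monoidality, one reduces to the case $\boxobj{n}=[1]$. There, the poset $[1]^{[k]}$ of monotone functions $[k]\ra[1]$ is the linear order $[k+1]$, and $\BOX[[k+1]]$ is the chain of $k+1$ cubical $1$-intervals joined end-to-end; triangulating produces exactly the chain of $k+1$ simplicial $1$-cells, which is $\sd_{k+1}\snerve[1]=\sd_{k+1}\DEL[1]$. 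Naturality in $\BOX$-morphisms on $[1]$ is given by Corollary \ref{cor:sd}, and naturality in the $\DEL_{\pls}$-direction (implicit in the definition of $\sd_{k+1}$) uses the same corollary.

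For the compatibility identification, by cocontinuity of both $\tri_\BOX\epsilon_C$ and $\epsilon_{\tri_\BOX C}$ in $C$ it suffices to check on $C=\BOX\boxobj{n}$, and by the monoidal structure on $\epsilon$ it suffices to check on $C=\BOX[1]$. The lemma defines cubical $\epsilon_{\BOX[1]}=\BOX[[1]^{0\mapsto 1:[0]\ra[2]}]$ and the simplicial $\epsilon_{\DEL[1]}$ is induced by the same map $0\mapsto 1:[0]\ra[2]$; under the identification of the previous paragraph these correspond, so the equality holds on $\BOX[1]$ and hence globally. The main obstacle is the monoidality step on the cubical side: verifying that Boolean-interval corestrictions of the poset isomorphism $(\boxobj{n_1+n_2})^{[k]}\cong(\boxobj{n_1})^{[k]}\times(\boxobj{n_2})^{[k]}$ yield $\BOX$-morphisms. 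This is exactly the hypothesis of Proposition \ref{prop:boolean.functions}, applied to each of the listed variants, and fails for a variant like $\DEL_1[\gamma_{\mins}]^*$ alone because there one lacks the coordinate transpositions needed to rearrange tensor factors, which is why the lemma is stated only for the given list.
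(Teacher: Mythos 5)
Your reduction strategy—cocontinuity to representables, then strong monoidality to $[1]$, then a concrete identification $[1]^{[k]}\cong[k+1]$—is the same skeleton as the paper's proof, and the final $\epsilon$-compatibility reduction is handled the same way. The computation at $[1]$ is correct.

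The genuine gap is in the step where you upgrade naturality from $\DEL_1^*$-morphisms to all $\BOX$-morphisms. Strong monoidality only gives you a natural isomorphism between the two composites restricted to the free monoidal subcategory $\DEL_1^*\subset\BOX$: a general $\BOX$-morphism (a coconnection $\gamma_{\mins}$, a monotone surjection $\boxobj{3}\ra[1]$, etc.) is not a tensor product of $\DEL_1$-morphisms, so monoidality alone says nothing about those. You appeal to Corollary \ref{cor:sd} to cover this, but that corollary establishes something different: it shows that $\BOX[(-)^{[k]}]$ is a well-defined functor $\BOX\ra\hat\BOX$ (by giving the unique lift $\BOX[\phi^{[k]}]$ against the nerve construction). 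It does not by itself show that your isomorphism, defined on $\DEL_1^*$-objects, intertwines the actions of $\sd_{k+1}\nerve_\DEL\phi$ and $\tri_\BOX\BOX[\phi^{[k]}]$ for $\phi$ outside $\DEL_1^*$. The sentence about "naturality in the $\DEL_{\pls}$-direction" is also off-topic here, since $k$ is fixed and the lemma makes no claim of naturality in $k$.

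The paper closes exactly this gap with an extra observation you've omitted: both $\sd_{k+1}\nerve_\DEL\boxobj{n}$ and $\tri_\BOX\BOX[(\boxobj{n})^{[k]}]$ have the property that their non-degenerate simplices are uniquely determined by their vertex sets, and the identification on vertices, $(\sd_{k+1}\nerve_\DEL\boxobj{n})_0 = \DEL([0]^{\oplus(k+1)},\boxobj{n}) = \POSETS([k],\boxobj{n}) = (\tri_\BOX\BOX[(\boxobj{n})^{[k]}])_0$, is manifestly natural in all $\BOX$-morphisms $\phi$ (both sides act by post-composition with $\phi$). This forces the isomorphism constructed on $\DEL_1^*$ to be natural in all of $\BOX$. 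Adding this vertex-determination argument would repair your proof; as written, the appeal to Corollary \ref{cor:sd} does not do the job.

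One minor note: your instinct to use $(-)^{[k]}$ rather than $(-)^{[k+1]}$ for $\sd_{k+1}$ is actually the consistent choice given the definition of $\sd_{k+1}$ as the left Kan extension of $\BOX[(-)^{[k]}]$; the paper's own proof body has an off-by-one slip in the display.
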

\begin{proof}
  To verify the last line, it suffices to take $C=\BOX[1]$ by all functors and natural transformations Cartesian monoidal; in that case the last line follows from inspection. 
  It therefore suffices to construct an isomorphism
  $$\sd_{k+1}\nerve_\DEL\boxobj{n}=\sd_{k+1}\tri_\BOX\BOX\boxobj{n}\cong\tri_\BOX\sd_{k+1}\BOX\boxobj{n}=\tri_\BOX\BOX\left[(\boxobj{n})^{[k+1]}\right]$$
  natural in $\BOX$-objects $\boxobj{n}$ by the cocontintuity of $\tri_\BOX$ and both of the endofunctors labelled $\sd_{k+1}$.

  There exists the desired isomorphism natural in $\DEL_1$-objects $[n]=\boxobj{n}$ by inspection.
  
  The functors $\sd_{k+1}:\SIMPLICIALSETS\ra\SIMPLICIALSETS$, $(-)^{[k+1]}:\POSETS\ra\POSETS$ and $\nerve_\DEL$ are right adjoints and in particular preserve finite products.  
  And the construction $\BOX[-]$ sends $\POSETS$-products to tensor products.  
  Thus there exists the desired monoidal isomorphism natural in $\DEL_1^*$-objects $\boxobj{n}$.

  Both constructions $\sd_{k+1}\nerve_\DEL\boxobj{n}$ and $\tri_\BOX\BOX\left[(\boxobj{n})^{[k+1]}\right]$ are natural in $\BOX$-objects $\boxobj{n}$.  
  And all simplicial functions of the form $\sd_{k+1}\nerve_\DEL\phi$ and $\tri_\BOX\BOX\left[\phi^{[k+1]}\right]$ are uniquely determined by where they send vertices because non-degenerate simplices in $\sd_{k+1}\nerve_\DEL\boxobj{n}$ and $\tri_\BOX\BOX\left[(\boxobj{n})^{[k+1]}\right]$ are uniquely determined by their sets of vertices. 
  Therefore the natural identifications
  $$(\sd_{k+1}\nerve_\DEL\boxobj{n})_0=\DEL([0]^{\oplus(k+1)},\boxobj{n})=
  \POSETS([k+1],\boxobj{n})=\left(\tri_\BOX\BOX\left[(\boxobj{n})^{[k+1]}\right]\right)_0.$$
  of vertices extends to the desired natural isomorphism.

\end{proof}

For reasonable variants of $\BOX$ excluding the diagonals and reversals, the monad associated to the adjunction with left adjoint $\tri_{\BOX}$ is cocontinuous.

\begin{lem}
  \label{lem:qt.cocontinuous}
  Suppose $\BOX$ lies in a chain of submonoidal subcategories
  $$\DEL_1^*\subset\BOX\subset\PARSIMONIOUSBOX$$
  with $\DEL_1^*$ wide in $\BOX$ and $\BOX$ EZ.  
  The monad of the adjunction with left adjoint $\tri_{\BOX}$ is cocontinuous.
\end{lem}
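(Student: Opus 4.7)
The plan is to verify cocontinuity of the monad $T=\snerve_\BOX\tri_\BOX$ by producing an explicit combinatorial description of $TC$ in terms of the cubes of $C$, and then observing that this description is manifestly cocontinuous in $C$. The key technical input is the EZ structure of $\BOX$ [Proposition \ref{prop:eilenberg.zilber}], which controls the combinatorics of non-degenerate simplices of $\tri_\BOX C$.

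First, I would use the EZ property to give a normal-form description of simplices of $\tri_\BOX C$: since $\tri_\BOX$ is a left Kan extension along the Yoneda embedding, each non-degenerate $k$-simplex of $\tri_\BOX C$ is represented, uniquely up to the action of $\PARSIMONIOUSBOX$-automorphisms (coordinate permutations) on $\boxobj{n}$ [\LEMBoxAutomorphisms], by a pair $(\theta,\phi)$ with $\theta$ a non-degenerate $n$-cube of $C$ and $\phi\colon[k]\hookrightarrow\boxobj{n}$ an injective monotone function, degenerate simplices arising from non-injective $\phi$. Using this, a simplicial map $\snerve\boxobj{n}\to\tri_\BOX C$ is determined by its values on non-degenerate simplices (chains in $\boxobj{n}$), subject to compatibility under restriction to subchains.

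Second, I would use the exclusion of diagonals and reversals [Lemmas \ref{lem:diagonals} and \ref{lem:reversals}] — i.e., the assumption $\BOX\subset\PARSIMONIOUSBOX$ — to show that these compatibility conditions force the entire simplicial map to factor through the triangulation of a single non-degenerate cube $\theta$ of $C$ equipped with a monotone map $\boxobj{n}\to\boxobj{\dim\theta}$. This yields an explicit description of $TC(\boxobj{n})$ as a quotient of the coproduct, over non-degenerate cubes $\theta$ of $C$, of the sets $\POSETS(\boxobj{n},\boxobj{\dim\theta})$, with identifications dictated by the face and degeneracy maps between cubes of $C$. This description manifestly respects colimits of $C$: colimits in $\hat\BOX$ are computed objectwise, and by the EZ structure [Proposition \ref{prop:eilenberg.zilber}] the indexing set of non-degenerate cubes together with the face relations among them varies cocontinuously.

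The main obstacle will be the careful bookkeeping of identifications across a colimit: given a simplicial map $\snerve\boxobj{n}\to\tri_\BOX\colim_iD(i)=\colim_i\tri_\BOX D(i)$, one must check that any two presentations arising from different indices $i$ give the same class in $\colim_iTD(i)(\boxobj{n})$, and that no spurious simplicial maps arise in the colimit that are not already realized at a finite stage. Both points are controlled by the uniqueness clause of the EZ decomposition applied level-wise, together with the fact that each simplex of a pushout of simplicial sets of the form $\tri_\BOX D(i)$ admits a well-defined non-degenerate representative in some $\tri_\BOX D(i_0)$.
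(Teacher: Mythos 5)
Your overall strategy matches the paper's: the crux in both arguments is showing that each simplicial map $\snerve\boxobj{m}\ra\tri_\BOX C$ factors through $\tri_\BOX$ applied to a single cube of $C$, whence the monad $\catfont{q}\catfont{t}$ is determined by its values on representables and so is cocontinuous. The paper wraps this more economically, as the assertion that the canonical (monic) cubical function from the left Kan extension $(\catfont{q}\catfont{t}\BOX[-])_!(C)$ to $\catfont{q}\catfont{t}C$ is epi, hence an isomorphism, after which cocontinuity of the Yoneda left Kan extension finishes. Your plan to produce an explicit quotient-of-coproduct formula for $TC$ carries the same content, just spelled out.

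One claim in your first step is false as written, and it propagates into the bookkeeping you flag as the main obstacle. A pair $(\theta,\phi)$ with $\theta$ a non-degenerate $n$-cube of $C$ and $\phi\colon[k]\ira\boxobj{n}$ an injective monotone function does \emph{not} determine a non-degenerate simplex of $\tri_\BOX C$ uniquely, even up to coordinate permutations on $\boxobj{n}$: for $C=\BOX\boxobj{2}$ and $k=1$, the $1$-simplex $(0,0)<(1,0)$ of $\snerve\boxobj{2}$ is represented both by the identity $2$-cube together with $0\mapsto(0,0)$, $1\mapsto(1,0)$, and by the coface onto $[1]\times\{0\}$ together with the identity of $[1]$. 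Uniqueness requires the further condition that the image of $\phi$ lies in no proper interval of $\boxobj{n}$, i.e.\ that $\phi$ hits both extrema of $\boxobj{n}$. This is exactly why the paper restricts attention to extrema-preserving chains $\iota$ and invokes \LEMBoxInclusions{} (monos in $\BOX$ are determined up to isomorphism by where they send their extrema, a $\PARSIMONIOUSBOX$-specific fact) to see that the minimal cube $\mu(\theta)$ is independent of the chosen chain, with the EZ axiom supplying terminality of $\mu(\theta)$. With that correction your sketch goes through.
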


A proof for the special case $\BOX=\DEL_1^*$ straigthforwardly adapts.  
For completeness, we spell out the proof in the more general context of this lemma. 

\begin{proof}
  Let $C$ denote a cubical set. 
  For brevity, let $\catfont{t},\catfont{q}$ denote the functors
  $$\catfont{t}=\tri_{\BOX},\quad\catfont{t}\dashv\catfont{q}.$$ 
  
  Consider the commutative diagram
  \begin{equation*}
    \begin{tikzcd}
	    \catfont{t}\BOX\boxobj{m_\theta}\ar{rrrrrr}[above]{\theta}\ar[dotted]{drrr}[description]{\snerve\,\phi} & & & {} & & & \catfont{t}C
	    \\
	    {}
	    & & {} & \catfont{t}\BOX\boxobj{n_\theta}\ar[dotted]{urrr}[description]{\tri\,\mu(\theta)}
	    \ar[l,phantom,"I"]
	    \ar[u,phantom,"II"]
	    \ar[r,phantom,"III"]
	    \ar[d,phantom,"IV"]
	    &
	    {}
	    \\
	    \DEL[i]\ar[dotted]{urrr}[description]{\snerve\phi(\theta)}\ar{uu}[left]{\snerve\iota} & & & {} & & & \DEL[1]\ar{llllll}[below]{\snerve(i\mapsto ni)}\ar[uu]\ar{ulll}[description]{\snerve(i\mapsto(i,i,\ldots,i))}
    \end{tikzcd}
  \end{equation*}
  in $\hat\DEL$, where $\iota$ denotes a monotone injection $[i]\ra\boxobj{m_\theta}$.
	There exist monotone extrema-preserving function $\phi(\theta)$ and cubical function $\mu(\theta)$ making I, III and hence also  IV commute because $\iota$ corestricts to an extrema-preserving montone function to an interval of $\boxobj{m_\theta}$.    
  We can take $\mu(\theta)$ to be a terminal such choice in $(\BOX/C)$, unique up to isomorphism, by $\BOX$ EZ.  
  Then $\mu(\theta)$ is the terminal choice of cubical function making III commute because the bottom diagonal arrows are all simplicial nerves of extrema-preserving monotone functions and monos in $\BOX$ are determined up to isomorphism by where they send their extrema [\LEMBoxInclusions].   
  Thus $\mu(\theta)$ does not depend on $\iota$.  
  It therefore follows that there exists a dotted monotone function $\phi$ making II commute.  
  Therefore the monic canonical cubical function 
  $$((\catfont{q}\catfont{t}\,\BOX[-])_!)(C)\ra\catfont{q}\catfont{t}C$$
  is epi and hence an isomorphism.
  Hence $\catfont{q}\catfont{t}$ is naturally isomorphic to the cocontinuous functor $(\catfont{q}\catfont{t}\BOX[-])_!$.
\end{proof}

\section{Homotopy}\label{sec:homotopy}

Cubical sets are interpretable as abstract combinatorial descriptions of geometric objects.  
Each such interpretation implicitly defines a homotopy theory on cubical sets based on a homotopy theory of some geometric objects.  

\subsection{Classical}
Let $|-|$ denote topological realization 
$$|-|:\SIMPLICIALSETS\ra\TOP.$$

There exists a unique homeomorphism $\dihomeo_{S;k+1}:|\sd_{k+1}S|\cong|S|$, natural and Cartesian monoidal in simplicial sets $S$, that is linear on each geometric simplex and hence characterizable by the followingn rule (c.f. \cite{ehlers2008ordinal}):
$$\dihomeo_{\DEL[n];k+1}(v)=\nicefrac{v}{k+1}\in|\DEL[n]|,\,v\in(\sd_{k+1}\DEL)_0=\DEL[k+1]_0.$$

More generally let $|-|$ denote the dotted functor making the diagram
\begin{equation*}
  \begin{tikzcd}
    \hat{\shape{1}}\ar{d}[left]{\tri_{\shape{1}}}\ar[dotted]{r}[above]{|-|} & \TOP\ar[d,equals]\\
    \SIMPLICIALSETS\ar{r}[below]{|-|} & \TOP .
  \end{tikzcd}
\end{equation*}
commute up to natural homeomorphism, for each subcategory $\shape{1}\subset\CATS$, 
Call $|C|$ the \textit{topological realization} of the $\hat{\shape{1}}$-object $C$.

\subsubsection{Homotopies}
We construct some homotopies needed to prove the main results.

\begin{lem}
  \label{lem:sd.approx}
  There exists a homotopy 
  $$|\epsilon_S|\sim|\dihomeo_{S;3}|:|\sd_{k+1}S|\ra|S|$$
  natural in simplicial sets $S$.
\end{lem}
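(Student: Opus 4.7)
The plan is to construct the homotopy by straight-line interpolation on representables, verify naturality on $\DEL$, and extend to arbitrary simplicial sets by cocontinuity.

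First I would treat the case $S=\DEL[n]$. Both $|\epsilon_{\DEL[n]}|$ and $|\dihomeo_{\DEL[n];3}|$ are continuous maps $|\sd_3\DEL[n]|\to|\DEL[n]|$, and $|\DEL[n]|$ is convex as a subspace of $\R^{n+1}$. The straight-line formula
\[
H_{\DEL[n]}(x,t) \;=\; (1-t)\,|\epsilon_{\DEL[n]}|(x) \;+\; t\,|\dihomeo_{\DEL[n];3}|(x)
\]
therefore defines a continuous homotopy $|\sd_3\DEL[n]|\times\I\to|\DEL[n]|$ from $|\epsilon_{\DEL[n]}|$ to $|\dihomeo_{\DEL[n];3}|$.

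Next I would verify that the family $\{H_{\DEL[n]}\}_{[n]\in\DEL}$ is natural in $[n]$. For each monotone $\alpha\colon[m]\to[n]$, the geometric realization $|\DEL[\alpha]|$ is an affine map between standard simplices and hence commutes with convex combinations. Combined with the naturality of both $\epsilon$ and $\dihomeo_{-;3}$ on representables, this yields
\[
|\DEL[\alpha]|\circ H_{\DEL[m]} \;=\; H_{\DEL[n]}\circ\bigl(|\sd_3\DEL[\alpha]|\times\id_{\I}\bigr).
\]

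Finally I would extend to a general simplicial set $S$ by writing $S=\colim_{\theta\in\DEL/S}\DEL[n_\theta]$ as the canonical colimit of its simplices. Since $\sd_3$ and $|-|$ are cocontinuous, $|\sd_3 S|=\colim_{\theta}|\sd_3\DEL[n_\theta]|$; in a convenient category of spaces, compactness of $\I$ further gives $|\sd_3 S|\times\I=\colim_\theta(|\sd_3\DEL[n_\theta]|\times\I)$. The naturality just established makes the composites $|\theta|\circ H_{\DEL[n_\theta]}$ into a compatible cocone, inducing a unique continuous $H_S\colon|\sd_3 S|\times\I\to|S|$ whose restrictions at $t=0$ and $t=1$ recover $|\epsilon_S|$ and $|\dihomeo_{S;3}|$ by naturality of these transformations. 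Naturality of $H_S$ in $S$ is then immediate from the colimit construction.

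The only real obstacle is the point-set housekeeping of cocontinuity of $(-)\times\I$ in $\TOP$; this is handled either by working in a convenient category of topological spaces such as compactly generated spaces, or else by describing $H_S$ concretely on the CW structure of $|\sd_3 S|$ cell by cell and verifying continuity on each closed geometric simplex, where both endpoints land in a common geometric simplex of $|S|$ and the straight-line formula is manifestly well-defined.
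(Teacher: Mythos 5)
Your proof is correct and matches the paper's argument: both reduce to representables by naturality and cocontinuity, then define the homotopy on $|\DEL[n]|$ by straight-line interpolation, with naturality following because $|\DEL[\alpha]|$ is affine and hence commutes with convex combinations. You simply spell out the colimit extension and the point-set care with $(-)\times\I$ that the paper leaves implicit.
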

\begin{proof}
  It suffices to take the case $S$ representable by naturality.  
  In the case $S=\DEL[n]$, linear interpolation defines the desired homotopies natural in $\DEL$-objects $[n]$ because $|\DEL[-]|$ sends $\DEL$-morphisms to convex linear maps between topological simplices.  
\end{proof}

\begin{lem}
  \label{lem:unit.equivalence}
  Suppose $\BOX$ lies in a chain of categories
  $$\DEL_1^*\subset\BOX\subset\POSETS$$
  with $\DEL_1^*$ wide in $\BOX$. 
  Write $\qua_\BOX$ for the right adjoint to $\tri_\BOX$.  
  The map $|\eta_C|$ is a homotopy equivalence, where $\eta_C:C\ra(\qua_\BOX\tri_\BOX[-])_!(C)$ is the cubical function induced by the unit of $\tri_\BOX\dashv\qua_\BOX$.
\end{lem}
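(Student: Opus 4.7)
The plan is to identify $|\eta_C|$ with $|\tri_\BOX\eta_C|$, use the adjunction triangle identity to split $\tri_\BOX\eta_C$ in $\hat\DEL$, verify on representable cells via a natural cubical contraction, and propagate by cellular gluing. By the definition of realization on cubical sets, $|\eta_C|$ coincides with $|\tri_\BOX\eta_C|$, so it suffices to prove the latter is a homotopy equivalence in $\TOP$. Write $F=(\qua_\BOX\tri_\BOX[-])_!$, cocontinuous by construction. Since $\tri_\BOX$ is also cocontinuous, $\tri_\BOX\eta_C$ is the colimit over $(\theta:\BOX\boxobj{n_\theta}\ra C)\in\BOX/C$ of the cell-level maps $\tri_\BOX\eta_{\BOX\boxobj{n_\theta}}:\snerve\boxobj{n_\theta}\ra\tri_\BOX\qua_\BOX\snerve\boxobj{n_\theta}$. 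Each of these is split by the counit $\counit_{\snerve\boxobj{n_\theta}}$ via the triangle identity. Taking the colimit of these counits yields a natural retraction $r_C:\tri_\BOX FC\ra\tri_\BOX C$ to $\tri_\BOX\eta_C$, so $|\tri_\BOX\eta_C|$ is a topological split mono with retraction $|r_C|$. By two-out-of-three, it suffices to show $|r_C|$ is a homotopy equivalence.

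On a representable $\BOX\boxobj{n}$, $r_{\BOX\boxobj{n}}$ is the counit $\counit_{\snerve\boxobj{n}}$, whose target $|\snerve\boxobj{n}|=\I^n$ is contractible and whose source $|\qua_\BOX\snerve\boxobj{n}|$ I claim is also contractible. To see the latter, note that the $k$-cubes of $\qua_\BOX\snerve\boxobj{n}$ are precisely the monotone functions $\boxobj{k}\ra\boxobj{n}$, and the natural cubical map $H:\BOX\boxobj{1}\otimes\qua_\BOX\snerve\boxobj{n}\ra\qua_\BOX\snerve\boxobj{n}$ sending $(t,f)$ to the monotone function $x\mapsto(t\wedge_{[1]}f(x)_1,\ldots,t\wedge_{[1]}f(x)_n)$ is constant at $\min\,\boxobj{n}$ at $t=0$ and the identity at $t=1$. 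Monoidality of $\tri_\BOX$ and of $|-|$ converts $|H|$ into a topological contraction of $|\qua_\BOX\snerve\boxobj{n}|$ onto a point. Hence $|r_{\BOX\boxobj{n}}|$ is a map between contractible CW complexes, automatically a homotopy equivalence.

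To extend to arbitrary $C$, I would argue by cellular induction along a skeletal filtration $\varnothing=C^{(-1)}\subset C^{(0)}\subset C^{(1)}\subset\cdots$, each stage a pushout along coproducts of boundary inclusions $\partial\BOX\boxobj{n}\hookrightarrow\BOX\boxobj{n}$. Cocontinuity of $\tri_\BOX F$ and of topological realization preserves these pushouts, and the gluing lemma for homotopy equivalences along closed cofibrations propagates the equivalence. The principal obstacle is verifying that $|F\partial\BOX\boxobj{n}|\hookrightarrow|F\BOX\boxobj{n}|$ is a closed cofibration; this reduces to $F$ preserving the boundary inclusion as a monomorphism, a consequence of the pointwise monicity of the unit (each $\eta_{\BOX\boxobj{k}}$ is the inclusion $\BOX(\boxobj{k},\boxobj{n})\hookrightarrow\POSETS(\boxobj{k},\boxobj{n})=(\qua_\BOX\snerve\boxobj{n})_k$) together with the standard fact that realizations of cubical inclusions built via cell attachment are closed CW cofibrations.
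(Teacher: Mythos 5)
Your first two moves match the paper's exactly: identify $|\eta_C|$ with $|\tri_\BOX\eta_C|$, and exhibit the natural retraction $r_C$ via the triangle identity. After that the arguments diverge. The paper stays at the level of chains and homotopy groups — simplicial approximation for $\pi_1$, Acyclic Models to upgrade $\tri_\BOX\eta_C$ and its retraction to mutually inverse natural chain homotopy equivalences, and then the Whitehead Theorem. You instead go geometric: contract $|\qua_\BOX\snerve\boxobj{n}|$ explicitly using the meet operation $(t,x)\mapsto(t\wedge x_1,\ldots,t\wedge x_n)$, and then propagate the equivalence from representables to arbitrary $C$ by cellular gluing along the skeletal filtration. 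Your contraction on representables is correct; the cleanest way to formalize the map you write is to factor it through $\cnerve(\boxobj{1}\times\boxobj{n})\to\cnerve\boxobj{n}$, using that $\cnerve=\qua_\BOX\snerve$ preserves products and the tensor unit is terminal, so that monoidality of $\tri_\BOX$ and of $|-|$ indeed yields a contraction of $|\cnerve\boxobj{n}|$.

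The gap is in the cellular gluing step, and it is exactly the point you yourself flag. You assert that $F=(\qua_\BOX\tri_\BOX\BOX[-])_!$ preserves the boundary inclusion $\partial\BOX\boxobj{n}\ira\BOX\boxobj{n}$ as a mono and attribute this to pointwise monicity of the unit, but these are unrelated statements: the unit being mono concerns $\BOX\boxobj{n}\ra F\BOX\boxobj{n}$, not $F$ applied to a mono. Left Kan extensions along Yoneda need not preserve monomorphisms — $F\partial\BOX\boxobj{n}$ is a colimit over the category of elements of $\partial\BOX\boxobj{n}$, and the comparison map to $F\BOX\boxobj{n}=\cnerve\boxobj{n}$ could in principle fail to be injective because identifications witnessed only through cubes lying outside $\partial\BOX\boxobj{n}$ may not be realized inside. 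In the restricted setting $\DEL_1^*\subset\BOX\subset\PARSIMONIOUSBOX$ with $\BOX$ EZ, Lemma~\ref{lem:qt.cocontinuous} identifies $F$ with $\qua_\BOX\tri_\BOX$ and one can argue mono-preservation from EZ-category structure; but the lemma as stated allows any $\DEL_1^*\subset\BOX\subset\POSETS$ (e.g.\ the Dedekind cube category with diagonals), where that identification is not available. The paper's Acyclic Models and Whitehead route is designed precisely to sidestep this: it never needs $F$ to interact well with boundary inclusions or skeletal filtrations, because everything is phrased in terms of natural chain maps on the full presheaf category. To salvage your approach you would need to supply a genuine combinatorial proof that $F$ preserves the relevant monos, which is a nontrivial statement about the site $\BOX$ and the functor $\qua_\BOX\snerve$.
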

\begin{proof}
  Note that $\eta_C$ is the cubical function
  $$\eta_C:C\ira\colim_{\BOX\boxobj{n}\ra C}\nerve_\BOX\boxobj{n}$$
  induced by inclusion $\BOX\ira\CATS$.  
  Then $|\eta_C|=|\tri_\BOX\eta_C|$ induces an equivalence of fundamental groupoids by an application of simplicial approximation.
  Moreover, $\tri_\BOX\eta_C$ admits a retraction $\rho_C$ natural in $C$ by the zig-zag identities for adjunctions.  
  Therefore $\tri_\BOX\eta_C$ and $\rho_C$ induce mutually inverse integral simplicial chain homotopy equivalences, natural in cubical sets $C$, by an application of the Acyclic Models Theorem.   
  Thus $|\eta_C|$ induces a homology isomorphism for all choices of local coefficients.  
  Therefore $|\eta_C|$ is a homotopy equivalence of topological spaces by the Whitehead Theorem.
\end{proof}

\subsubsection{Model structures}
There are often two types of model structures on presheaf categories, both often Quillen equivalent to the usual model structure on topological spaces.  
Recall that a \textit{test model structure} is a model structure on $\hat{\shape{1}}$ in which the weak equivalences are those $\hat{\shape{1}}$-morphisms $\psi:A\ra B$ for which $|\nerve_\DEL(\shape{1}/\psi)|:|\nerve_\DEL(\shape{1}/A)|\ra|\nerve_\DEL(\shape{1}/B)|$ is a homotopy equivalence and cofibrations are the monos, for each small category $\shape{1}$.   
Call a \textit{classical model structure} a model structure on $\hat{\shape{1}}$ left induced along topological realization $|-|:\hat{\shape{1}}\ra\TOP$ for a small subcategory $\shape{1}$ of $\CATS$.

\begin{eg}
  On $\SIMPLICIALSETS$, the test and classical model structures coincide.
\end{eg}

\begin{eg}
  Suppose $\BOX$ is one of the following variants:
  $$\DEL_1^*,\DEL_1[\gamma_{\mins}]^*,\DEL_1[\gamma_{\pls}]^*,\DEL_1[\tau,\gamma_{\mins},\gamma_{\pls},\diagonal]^*.$$
  Then the test and classical model structures on $\hat\BOX$ exist and coincide and are equivalent to the usual model structure on $\TOP$ along the Quillen equivalence whose left map is topological realization [\cite[Theorem 8.4.38]{cisinskiprefaisceaux}, \cite[Proposition 2.3, Theorem 2.4]{cavallotype}, \cite[Theorem 6.3.6]{awodey2024equivariant}].  
  The test and classical model structures do not generally coincide for symmetric monoidal variants of $\BOX$ that exclude the diagonals (e.g. \cite[p5. \P3]{awodey2024equivariant}) like $\PARSIMONIOUSBOX$.
\end{eg}

\begin{prop}
  \label{prop:classical.model.structure}
  Suppose $\BOX$ is one of the following variants:
  $$\BOX=\DEL_1[\tau]^*,\DEL_1[\tau,\gamma_{\mins}]^*,\DEL_1[\tau,\gamma_{\pls}]^*,\DEL_1[\tau,\gamma_{\mins},\gamma_{\pls}]^*,\PARSIMONIOUSBOX.$$
  There exists a model structure on $\hat\PARSIMONIOUSBOX$ in which \ldots
  \begin{enumerate}
    \item \ldots a weak equivalence $\psi$ is characterized by $|\psi|$ a homotopy equivalence
    \item \ldots the cofibrations are the monos
  \end{enumerate}
  Topological realization defines the left map of a Quillen equivalence from $\hat\PARSIMONIOUSBOX$ equipped with this model structure to the category of topological spaces equipped with its classical model structure.
  If $\BOX=\DEL_1[\tau,\gamma_{\mins}]^*,\DEL_1[\tau,\gamma_{\pls}]^*,\DEL_1[\tau,\gamma_{\mins},\gamma_{\pls}]^*,\PARSIMONIOUSBOX$, then localization by weak equivalences in this model structure preserves finite products.
\end{prop}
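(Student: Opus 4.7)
The plan is threefold: first establish existence via left induction, then argue the Quillen equivalence via triangulation, and finally address finite products separately.

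For existence of a model structure satisfying (1) and (2), I would invoke \cite[Theorem 2.2.1]{hess2017necessary} to left-induce the classical Quillen model structure on $\TOP$ along the realization functor $|-|:\hat\BOX\ra\TOP$. The presheaf category $\hat\BOX$ is locally presentable, and $|-|$ is accessible as a cocontinuous left adjoint. The required acyclicity hypothesis is verified using Lemma \ref{lem:unit.equivalence}, which gives enough control over the comparison of weak equivalences across the adjunction $\tri_\BOX\dashv\qua_\BOX$. The class of cofibrations in the resulting model structure coincides with the class of monomorphisms because the boundary inclusions $\partial\BOX\boxobj{n}\ira\BOX\boxobj{n}$ saturate to the monos in a presheaf category over an Eilenberg--Zilber site (Proposition \ref{prop:eilenberg.zilber}), and their realizations are CW inclusions. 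Item (1) then follows since realizations of cubical sets are CW complexes, for which weak equivalences coincide with homotopy equivalences.

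For the Quillen equivalence, I would factor the realization through triangulation as
\begin{equation*}
\hat\BOX\xra{\tri_\BOX}\hat\DEL\xra{|-|}\TOP,
\end{equation*}
where the right factor is the classical Quillen equivalence between simplicial sets and topological spaces. It remains to show that $\tri_\BOX$ is a Quillen equivalence onto $\hat\DEL$ with the Kan--Quillen structure. Triangulation sends monos to monos and, by construction of the left-induced structure, sends weak equivalences to weak equivalences, so it is left Quillen. Since every object of $\hat\BOX$ is cofibrant, it suffices to check that the unit $\eta_C:C\ra\qua_\BOX\tri_\BOX C$ is a weak equivalence for every cubical set $C$, which amounts to $|\eta_C|$ being a homotopy equivalence; this is exactly Lemma \ref{lem:unit.equivalence}. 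Cocontinuity of $\qua_\BOX\tri_\BOX$ from Lemma \ref{lem:qt.cocontinuous} ensures that this check is stable under colimits.

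For the finite products claim, I would reduce to showing that for any weak equivalence $\psi$ and cubical set $B$, the Cartesian product $\psi\times\id_B$ is again a weak equivalence. The subtlety is that the Cartesian product in $\hat\BOX$ is pointwise and differs from the Day convolution $\otimes$, for which $|A\otimes B|\cong|A|\times|B|$ holds tautologically. In the presence of at least one coconnection, the variant $\BOX$ becomes a strict test category in the sense of Maltsiniotis, and the argument of \cite[Proposition 4.3]{maltsiniotis2009categorie} (which handles $\BOX_c=\DEL_1[\gamma_{\mins}]^*$) adapts to each of the variants $\DEL_1[\tau,\gamma_{\mins}]^*$, $\DEL_1[\tau,\gamma_{\pls}]^*$, and $\DEL_1[\tau,\gamma_{\mins},\gamma_{\pls}]^*$ since addition of the symmetry $\tau$ does not disrupt the test-categorical argument. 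For $\BOX=\PARSIMONIOUSBOX$, one propagates from the wide subcategory $\DEL_1[\tau,\gamma_{\mins},\gamma_{\pls}]^*\subset\PARSIMONIOUSBOX$ by comparing Cartesian products of representables via subdivision, exploiting Proposition \ref{prop:local.lifts} and the subdivision-compatibility of triangulation (Lemma \ref{lem:tri}).

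The main obstacle will be the finite products claim for $\PARSIMONIOUSBOX$. The difficulty is that $\PARSIMONIOUSBOX$ itself is not immediately a strict test category in the sense that directly yields the Maltsiniotis axiom, so one must construct an explicit natural zig-zag of weak equivalences comparing $|A\times_{\hat\PARSIMONIOUSBOX}B|$ with $|A|\times|B|$. Using cocontinuity of $\qua_\BOX\tri_\BOX$ from Lemma \ref{lem:qt.cocontinuous} one reduces to the representable case $A=\BOX\boxobj{m}$, $B=\BOX\boxobj{n}$, where both sides realize to the standard cube $\I^{m+n}$ and the comparison is explicit; the propagation to general $A,B$ then uses cubical approximation via $\sd_9$ and the local lifts of Proposition \ref{prop:local.lifts}.
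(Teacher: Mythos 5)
Your overall strategy matches the paper's: existence via Hess--K\c{e}dziorek--Riehl--Shipley left transfer, Quillen equivalence via triangulation and Lemma~\ref{lem:unit.equivalence}, and an appeal to test-category arguments for products. However, there are two points worth flagging.

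First, the acyclicity check for left transfer is not what you say it is. You write that ``the required acyclicity hypothesis is verified using Lemma~\ref{lem:unit.equivalence}''; but the hypothesis in \cite[Theorem 2.2.1]{hess2017necessary} (the form the paper invokes) is a cylinder condition: for each cubical set $C$, the fold map $\id_C\amalg\id_C:C\amalg C\ra C$ must factor as $\rho_C\iota_C$ with $L\iota_C$ a cofibration and $L\rho_C$ a weak equivalence in the target. Lemma~\ref{lem:unit.equivalence} is about the unit of the $\tri_\BOX\dashv\qua_\BOX$ adjunction and does not provide this factorization. The paper instead supplies the factorization explicitly: $\id_C\amalg\id_C$ factors through $C\otimes\BOX[1]$ via $\BOX[\delta_{\mins}]\amalg\BOX[\delta_{\pls}]$ followed by $C\otimes\BOX[\sigma]$, and these have the required properties after triangulation. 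You should do the same. (The paper also induces along $\tri_\BOX:\hat\BOX\ra\hat\DEL$ rather than along $|-|$ directly; either works, and the identity of the resulting structure with the one you describe is then a short observation, since the test model structure on $\hat\DEL$ is itself left-induced from $\TOP$.)

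Second, your treatment of the finite-products claim is the most speculative part of your writeup, and it does not land. For the variants $\DEL_1[\tau,\gamma_{\mins}]^*$ etc.\ the appeal to Maltsiniotis is the right idea, but for $\PARSIMONIOUSBOX$ the mechanism you propose --- comparing Cartesian products of representables via subdivision, Proposition~\ref{prop:local.lifts}, and $\sd_9$ --- is approximation machinery, not a products argument, and nothing in the sketch actually constructs the needed zig-zag between $|A\times B|$ and $|A|\times|B|$ in a way that is functorial enough to conclude. The genuine content here is that $\PARSIMONIOUSBOX$ has a terminal object and admits a (left, right) factorization system with coconnections, and the Maltsiniotis criterion for a local test category to be strict propagates from the wide subcategory $\DEL_1[\tau,\gamma_{\mins}]^*$ since the realization of every representable is contractible and aspherical over every subvariant. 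Worth noting: the paper's own proof as written only establishes existence of the model structure and the Quillen equivalence and does not spell out the finite-products assertion either, so you are attempting to fill a real gap, but the reduction you describe for $\PARSIMONIOUSBOX$ needs to be replaced by an actual strict test category argument.
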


The proof uses the following observation from the literature.  
A category $\mathscr{A}$ admits a model structure left transferred along a left adjoint $L:\mathscr{A}\ra\mathscr{B}$ if $\mathscr{A}$ and $\mathscr{B}$ are both locally presentable, the model structure on $\mathscr{B}$ is additionally cofibrantly generated, and for each $\mathscr{A}$-object $o$, the $\mathscr{A}$-morphism $\id_o\amalg\id_o:o\amalg o\ra o$ factors into a composite $\rho_o\iota_o$ with $L\iota_o$ a cofibration in $\mathscr{B}$ and $L\rho_o$ a weak equivalence in $\mathscr{B}$ \cite[special case of Theorem 2.2.1]{hess2017necessary}.  

\begin{proof}
  For brevity, define $\catfont{t}=\tri_\BOX$, $\catfont{t}\dashv\catfont{q}$, and let $\eta,\epsilon$ denote the unit and counit of $\catfont{t}\dashv\catfont{q}$.
  
  There exists a model structure on $\CUBICALSETS$ left induced by the classical model structure on $\SIMPLICIALSETS$ along triangulation $\tri_{\BOX}$ because $\CUBICALSETS$ and $\SIMPLICIALSETS$ are locally presentable, the classical model structure on $\SIMPLICIALSETS$ is cofibrantly generated, and for each cubical set $C$ the fold morphism $\id_C\amalg\id_C:C\amalg C\ra C$ factors as a composite of $\BOX[\delta_{\mins}]\amalg\BOX[\delta_{\pls}]$ followed by $C\otimes\BOX[\sigma]$ and $\tri_\BOX(\BOX[\delta_{\mins}]\amalg\BOX[\delta_{\pls}])$ is monic and hence a cofibration in $\SIMPLICIALSETS$ and $\tri_\BOX\BOX[\sigma]=\DEL[\sigma]$ is a weak equivalence in $\SIMPLICIALSETS$ [\cite[Theorem 2.2.1]{hess2017necessary}].
  This model structure has the desired weak equivalences and cofibrations because the test model structure on $\SIMPLICIALSETS$ is left induced by the classical model structure on $\TOP$ left induced by topological realization $|-|$.  
  Moreover, $|-|$ is the left map of a Quillen adjunction by construction.  

  The functor $\catfont{t}$ is the left map of a Quillen adjunction and hence induces a left adjoint $h\catfont{t}:h\CUBICALSETS\ra h\SIMPLICIALSETS$ between homotopy categories of associated classical model structures.  
  The functor $h\catfont{t}$ is essentially surjective because $\catfont{t}(\DEL_1^*\ira\BOX)_!=\tri_{\DEL_1^*}$ is the left map of a Quillen equivalence.
  The functor $h\catfont{t}$ is therefore a categorical equivalence because $\eta$ is object-wise a weak equivalence [Lemma \ref{lem:unit.equivalence}].
  Thus the left Quillen map $|-|$ is the left map of a Quillen equivalence.
\end{proof}

An intrinsically cubical description of the fibrant objects and weak equivalences in the classical model structure \cite[Theorem 3.20]{krishnan2022uniform} straightforwardly adapts for the case $\BOX=\PARSIMONIOUSBOX$.
Moreover, $|-|:\hat\PARSIMONIOUSBOX\ra\TOP$ sends test fibrations to maps that are Serre fibrations up to a natural self homotopy equivalence, in the following sense.  

\begin{lem}
  \label{lem:dold.fibration}
  Suppose $\BOX$ is one of the following variants:
  $$\BOX=\DEL_1[\tau]^*,\DEL_1[\gamma_{\mins}]^*,\DEL_1[\tau,\gamma_{\mins}]^*,\DEL_1[\gamma_{\pls}]^*,\DEL_1[\tau,\gamma_{\pls}]^*,\PARSIMONIOUSBOX.$$
  Consider a fibration $\psi$ in the classical model structure on $\hat\BOX$ and commutative diagram
  \begin{equation}
    \label{eqn:dold}
    \begin{tikzcd}
	    {|C|}
	    \ar[dr]\ar{rrrr}[above]{f}
	    \ar{dd}[left]{|C|\times(\{0\}\ira\I)} 
	    & & & & {|E|}\ar{dd}[right]{|\psi|}
	    \ar{dlll}[description]{|\epsilon_E|\dihomeo_{E;3}^{-1}}
      \\
      & {|E|}\ar{dd}[right,,near start]{|\psi|}
      \\
      {|C|\times\I}\ar[ur,dotted]\ar{rrrr}[below]{g}\ar[dr] & & & & {|B|}\ar{dlll}[description]{|\epsilon_B|\dihomeo_{B;3}^{-1}}
      \\
      & {|B|}
    \end{tikzcd}
  \end{equation}
  There exists a dotted map making the entire diagram commute in $\TOP$.  
\end{lem}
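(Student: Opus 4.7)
The plan is to reduce the twisted lifting problem in~(\ref{eqn:dold}) to a classical homotopy lifting problem for $|\sd_3\psi|$, solve the latter using Serre-fibrancy, and transport the solution back along $|\epsilon_E|$.

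First I would verify that the inner square of~(\ref{eqn:dold}) commutes. Naturality of $\epsilon\colon\sd_3\Rightarrow\id$ on cubical sets together with naturality of the homeomorphism $\dihomeo_{-;3}$ (transferred from simplicial to cubical sets via Lemma~\ref{lem:tri}) yields, upon applying $|-|$,
\[
|\psi|\circ|\epsilon_E|\dihomeo_{E;3}^{-1}=|\epsilon_B|\circ|\sd_3\psi|\circ\dihomeo_{E;3}^{-1}=|\epsilon_B|\dihomeo_{B;3}^{-1}\circ|\psi|.
\]
Pre-composing with $f$ and invoking the outer-square identity $|\psi|f=g\circ(|C|\times\{0\})$ then shows the inner square commutes.

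Second, I would set $\tilde f=\dihomeo_{E;3}^{-1}f\colon|C|\to|\sd_3 E|$ and $\tilde g=\dihomeo_{B;3}^{-1}g\colon|C|\times\I\to|\sd_3 B|$; by naturality of $\dihomeo$ the pair $(\tilde f,\tilde g)$ is a genuine HLP datum for $|\sd_3\psi|$, i.e.\ $|\sd_3\psi|\tilde f=\tilde g\circ(|C|\times\{0\})$. Because the classical model structure on $\hat\BOX$ is left-induced along $\tri_\BOX$ from the Kan--Quillen structure on $\SIMPLICIALSETS$, the triangulation $\tri_\BOX\psi$ is a Kan fibration, hence so is its ordinal subdivision $\sd_3\tri_\BOX\psi\cong\tri_\BOX\sd_3\psi$ by Lemma~\ref{lem:tri}; Quillen's realization theorem then gives that $|\sd_3\psi|$ is a Serre fibration. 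Solving the HLP yields $h\colon|C|\times\I\to|\sd_3 E|$ with $h\circ(|C|\times\{0\})=\tilde f$ and $|\sd_3\psi|\circ h=\tilde g$. The required dotted lift is then $|\epsilon_E|\circ h$, and commutativity of the two triangles of the inner square follows immediately from the naturality identity $|\psi||\epsilon_E|=|\epsilon_B||\sd_3\psi|$ together with the definitions of $\tilde f$ and $\tilde g$.

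The main obstacle is the claim that $\tri_\BOX\psi$ is a Kan fibration: in a left-induced model structure, the left adjoint need not a priori preserve fibrations. For the variants of $\BOX$ listed, this reduction should nonetheless be available because $\tri_\BOX\dashv\qua_\BOX$ is a Quillen equivalence---one can compare the left-induced structure on $\hat\BOX$ with a better-understood cubical model structure (e.g.\ via the wide inclusion $\DEL_1[\gamma_{\mins}]^*\subset\BOX$ and classical Jardine-type cubical model structures, where fibrations are characterized precisely by $\tri$-Kan-fibrancy). Preservation of the Kan property under ordinal subdivision on $\SIMPLICIALSETS$ is a routine adjunction check; the remainder of the proof is bookkeeping with naturality.
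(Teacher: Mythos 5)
The central claim---that $\tri_\BOX\psi$ is a Kan fibration---is not justified, and this is where the argument fails. The classical model structure on $\hat\BOX$ is \emph{left-induced} along $\tri_\BOX$, which makes $\tri_\BOX$ a left Quillen functor: it preserves cofibrations and acyclic cofibrations, but gives no reason to preserve fibrations. A fibration $\psi$ in $\hat\BOX$ has the right lifting property against acyclic monos of cubical sets; by adjunction, $\tri_\BOX\psi$ lifts against a simplicial acyclic mono $i$ iff $\psi$ lifts against $\qua_\BOX i$, and $\qua_\BOX$ (a right adjoint) does not take acyclic cofibrations to acyclic cofibrations. Your fallback---comparison with a Jardine-type structure where fibrations are exactly the maps with Kan triangulation---invokes a characterization that, to my knowledge, is not established for these variants; and for $\BOX=\PARSIMONIOUSBOX$ the paper explicitly warns that the test and classical structures differ, so even the better-understood test-structure fibration theory is not available here.

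This gap is not peripheral: it is the whole content of the lemma. If $\tri_\BOX\psi$ were Kan, then $|\psi|$ would be a Serre fibration outright, and the twist maps $|\epsilon_E|\dihomeo_{E;3}^{-1}$, $|\epsilon_B|\dihomeo_{B;3}^{-1}$ would be unnecessary. Note also that in your argument the subdivision does no work---$\dihomeo_{-;3}$ is a natural homeomorphism, so $|\sd_3\psi|$ is Serre iff $|\psi|$ is---whereas in the paper's proof subdivision is essential, since the argument reduces to $C$ finite, replaces $f,g$ by $f\dihomeo_C^k$, $g\dihomeo_C^k$ (for $k\gg 0$) so that closed cells land in open stars, and then assembles a global lift from the \emph{natural local lifts} of Proposition \ref{prop:local.lifts} by pushing out acyclic cofibrations $\iota_{\BOX[\phi_A]}$ inside $\hat\BOX$, where $\psi$ genuinely lifts. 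The bookkeeping with $\epsilon$ and $\dihomeo$ in your write-up is fine; but absent a proof that $|\psi|$ is Serre, the argument does not close, and establishing that would be a strictly stronger result than the lemma.
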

\begin{proof}
  Let $A$ denote an object in the category of all atomic subpresheaves of $C$ and inclusions between them.  
  For brevity, let $d_C=|\epsilon_C|\dihomeo^{-1}_{C;3}:|C|\ra|C|$.
  Define $M\psi$, $\iota_{\psi}$, $\rho_{\psi}$, and $\kappa_{\psi}$ by the commutativity of the following diagram, natural in cubical functions $\psi:C'\ra C''$, in which the outer square is co-Cartesian:
	\begin{equation*}
          \begin{tikzcd}
		  C'\otimes\BOX[1]\ar{dr}[description]{\psi(C'\otimes\BOX[\sigma])}\ar{rr}[above]{\iota_{\psi}} & & M\psi\ar{dl}[description]{\rho_{\iota}}
		  \\
		  & C''
		  \\
		  C'\ar{rr}[below]{\psi}\ar{uu}[left]{C'\otimes\BOX[\delta_{\mins}]} & & C''\ar{uu}[right]{\kappa_{\psi}}\ar{ul}[description]{\id_{C''}}
	  \end{tikzcd}
	\end{equation*}
	
  It suffices to take $C$ finite; the general case follows because $|C|$ is the homotopy colimit over all spaces $|B|$, where $B$ is taken over all finite subpresheaves of $C$.  
  Thus for $k\gg 0$, $f\dihomeo^k_C,g\dihomeo^k_C$ both map closed cells into open stars of vertices by $|C|$ compact.  
  Therefore it suffices to take $f,g$ to map each closed cell in $|C|$ and $|C\otimes\sd^k\BOX[1]|$, respectively, to an open star of a vertex.  
  It therefore further suffices to take $g$ to map each closed cell in $|C\otimes\BOX[1]|$ to an open star of a vertex by induction.  

  There exist terminal choice of $(\BOX/E)$-object $\alpha_A:\BOX\boxobj{e_A}\ra E$, choice of $(\BOX/B)$-object $\beta_A:\BOX\boxobj{b_A}\ra B$ and lifts $\lambda_{f;A}$ and $\lambda_{g;A}$ of $d_Ef|A\ira C|$ against $|\alpha_A|$ and $d_Bg|(A\ira C)\otimes\BOX[1]|$ against $|\beta_A|$, natural in $A$ [Proposition \ref{prop:local.lifts}].  
  There exists a $\BOX$-morphism $\phi_A:\boxobj{e_A}\ra\boxobj{b_A}$ natural in $A$ such that $\psi\alpha_A=\beta_A\BOX[\phi_A]$ by terminality in our choice of $\alpha_A$.   
  Note that the $\BOX[\phi_A]$'s and hence also the $\iota_{\BOX[\phi_A]}$'s induce simplicial functions between weakly contractible triangulations and therefore define acyclic cofibrations.  
  Define $\nu_\psi$ and $F\psi$ so that $\nu_\psi:E\ra F\psi$ is the pushout of $\amalg_A\iota_{\BOX[A]}$ along $\amalg_A\alpha_A:\amalg_A\BOX\boxobj{e_A}\ra E$.
  In particular note that $\nu_\psi$ is an acyclic cofibration.
  Define the solid diagram
  \begin{equation*}
    \begin{tikzcd}
	    {|C|}\ar[r]
	    \ar{d}[left]{|C|\times(\{0\}\ira\I)} 
	    & {|\colim_A\BOX\boxobj{e_A}|}\ar{rrr}[above]{|\colim_A\BOX\boxobj{e_A}\ra E|}\ar{d}[description]{|\nu_\psi|}
	    & & & {|E|}\ar{d}[right]{|\psi|}
      \\
	      {|C|\times\I}\ar[r]
	    & {|F\psi|}\ar{rrr}[below]{|\colim_AM\BOX[\phi_A]\ra B|}\ar[urrr,dotted]
	    & & & {|B|}
    \end{tikzcd}
  \end{equation*}
  as follows. 
  The top left horizontal arrow is induced by the $\lambda_{f;A}$'s.  
  The bottom left horizontal arrow is induced by the $|\kappa_{\BOX[\phi_A]}|\lambda_{g;A}$'s
  The top right horizontal arrow is induced by the $\alpha_A$'s.
  The bottom right horizontal arrow is induced by the $\beta_A\rho_{\BOX[\phi_A]}$'s.  
  Then the composite of the top horizontal arrows is $d_Ef$ and the composite of the bottom horizontal arrows is $d_Bg$.  
  The left square commutes up to homotopy because the left square would commute if the middle vertical arrow were replaced by $|\colim_A\BOX[\phi_A]|$.    
  There exists a dotted map making the right square commute because in the classical model structure, $\colim_A\iota_{\BOX[\phi_A]}$, an iterated pushout of acyclic cofibrations $\iota_{\BOX[\phi_A]}$, is an acyclic cofibration, and $\psi$ is a fibration.   
  Therefore the composite map $\lambda:|C|\times\I\ra|E|$ in the above diagram therefore lifts $d_Bg$ against $|\psi|$ and satisfies $\lambda(|C|\times(\{0\}\ira\I))\sim d_Ef$.  
  Then $\lambda$ can be replaced by a dotted map making (\ref{eqn:dold}) commute by $|C|\times(\{0\}\ira\I)$ a Hurewicz cofibration.  
\end{proof}

The classical model structure on $\hat\PARSIMONIOUSBOX$ is therefore \textit{type-theoretic} in a suitable sense.  

\begin{thm}
  \label{thm:proper.test}
  Suppose $\BOX$ is one of the following variants:
  $$\BOX=\DEL_1^*,\DEL_1[\tau]^*,\DEL_1[\gamma_{\mins}]^*,\DEL_1[\tau,\gamma_{\mins}]^*,\DEL_1[\gamma_{\pls}]^*,\DEL_1[\tau,\gamma_{\pls}]^*,\PARSIMONIOUSBOX.$$
  The classical model structure on $\hat\BOX$ is proper.
\end{thm}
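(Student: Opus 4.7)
The plan is to prove left and right properness separately. Left properness will be immediate: the cofibrations are the monomorphisms [Proposition~\ref{prop:classical.model.structure}], so every object is cofibrant (the initial inclusion is mono), and any model category in which every object is cofibrant is left proper.

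For right properness, I would start with a pullback square
\begin{equation*}
\begin{tikzcd}
P \ar[r,"\pi"] \ar[d] & E \ar[d,"\psi"] \\
A \ar[r,"w"'] & B
\end{tikzcd}
\end{equation*}
in $\hat\BOX$ with $\psi$ a fibration and $w$ a weak equivalence, and aim to show $|\pi|$ is a homotopy equivalence. First I would apply $|-|$ and factor the resulting map $|P| \to |E|$ through the topological pullback as the composite of the canonical comparison $\kappa : |P| \to |A| \times_{|B|} |E|$ with the projection $|A| \times_{|B|} |E| \to |E|$. Lemma~\ref{lem:dold.fibration} tells me $|\psi|$ enjoys the weak covering homotopy property up to the natural self-equivalence $|\epsilon_E|\dihomeo^{-1}_{E;3}$ and so qualifies as a Dold $h$-fibration; Dold's classical pullback theorem will then make the projection $|A| \times_{|B|} |E| \to |E|$ a homotopy equivalence.

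It remains to show $\kappa$ is a weak equivalence. Pullbacks in $\hat\BOX$ are computed levelwise, so $P_n = A_n \times_{B_n} E_n$. Given a map $f : K \to |A| \times_{|B|} |E|$ from a finite CW complex, I would iterate $\sd_9$ sufficiently many times so that on each closed star of a vertex in the subdivided $K$, Proposition~\ref{prop:local.lifts} applied simultaneously to $A$, $B$, and $E$ produces compatible cubical lifts through cubes of $A$ and $E$ sitting over a common cube of $B$; these would assemble into a cubical approximation of $f$ valued in $P$, showing bijectivity of $\kappa_*$ on all homotopy groups with arbitrary local coefficients. Whitehead's theorem would then make $\kappa$ a weak equivalence. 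Composing with the preceding step would finish the proof for the six variants of $\BOX$ covered by Lemma~\ref{lem:dold.fibration}, and the remaining case $\BOX = \DEL_1^*$ would follow from the established classical argument in the literature (cf.\ \cite{cisinskiprefaisceaux}).

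The hard part will be the comparison step for $\kappa$: $|-|$ does not generally commute with pullbacks, and making the lifts over $A$ and $E$ glue coherently over $B$ requires exactly the simultaneous-approximation content of Proposition~\ref{prop:local.lifts}. The exclusion of diagonals from every $\BOX$ listed in the theorem is essential here, since it is what keeps realizations of $\BOX$-morphisms cellular (cf.\ the discussion following Lemma~\ref{lem:diagonals}) and permits the star-by-star gluing; a variant of $\BOX$ containing diagonals would obstruct this inductive assembly.
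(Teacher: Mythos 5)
Your treatment of left properness is correct (every object is cofibrant since the cofibrations are the monos, so left properness is automatic) and the paper simply leaves it implicit.

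For right properness, however, your proposal takes a materially different and more difficult route than the paper, and it contains gaps. The paper's first move is the one you skip: it factors the weak equivalence $\iota$ and reduces to the case where $\iota$ is an \emph{acyclic cofibration}, i.e.\ a monomorphism of presheaves. In that case topological realization preserves the pullback on the nose ($E\times_BA$ is exactly the subpresheaf of $E$ of cubes landing in $A$, and realization sends monos to embeddings), so the comparison map $\kappa$ you worry about is a homeomorphism and the hard step you set up simply never arises. By not making this reduction, you have created a problem -- showing $\kappa$ is a weak equivalence in general -- that the paper's strategy eliminates. Your sketch of this step (simultaneous local lifts over $A$, $B$, $E$ via Proposition~\ref{prop:local.lifts}, assembled into a cubical approximation valued in $P$, then Whitehead) is hand-wavy: Proposition~\ref{prop:local.lifts} provides lifts from closed stars into representables through $\epsilon^2_C$, and there is no account of why the lifts over $A$ and $E$ can be forced to live over a \emph{common} cube of $B$ compatibly, which is exactly what a comparison statement requires. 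That is a genuine missing argument, not a routine verification.

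The invocation of Dold's pullback theorem is also not justified as stated. Lemma~\ref{lem:dold.fibration} does \emph{not} say $|\psi|$ is a Dold fibration; it says $|\psi|$ has the covering homotopy property only after post-composing the lifting data by the self-map $|\epsilon_E|\dihomeo^{-1}_{E;3}$, which is a homotopy equivalence but not the identity. Converting this into an honest homotopy-invariance-of-pullback conclusion requires care: the paper instead constructs an explicit section $\tilde s$ of the corrected lifting problem (via a retraction $r$ to $|\iota|$ and a homotopy $h$ rel $|A|$), then shows $\tilde s$ has image in $|E\times_BA|$, and finally builds the two-sided homotopy inverse $\tilde r$ by hand using Lemmas~\ref{lem:tri} and~\ref{lem:sd.approx} to identify $d_E$ and $d_{E\times_BA}$ with the identity up to homotopy. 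You need an argument like this to absorb the discrepancy caused by $d_E$; citing Dold's theorem off the shelf does not account for it.

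Finally, deferring $\BOX=\DEL_1^*$ to the literature is fine in spirit, but you do not say what to do for $\DEL_1[\gamma_{\mins}]^*$ and $\DEL_1[\gamma_{\pls}]^*$, which appear in the theorem but not in Proposition~\ref{prop:local.lifts}; they \emph{are} covered by Lemma~\ref{lem:dold.fibration}, which is the tool the paper actually uses, so the paper's route again sidesteps an issue yours creates.
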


A relatively recent criterion for a presheaf model category to be right proper \cite{gambino2017frobenius} does not apply, at least obviously, to the classical model category $\hat\PARSIMONIOUSBOX$.  
The proof of the theorem instead mimics a classical argument for simplicial sets: that geometric realization reflects weak equivalences and sends fibrations and weak equivalences to the fibrations and weak equivalences of a right proper model category.

\begin{proof}
  For brevity, let $d_C=|\epsilon_C|\dihomeo^{-1}_{C;3}:|C|\ra|C|$.
  
  Consider the left Cartesian square $\hat\BOX$ among the diagrams
  \begin{equation*}
    \begin{tikzcd}
	    E\times_BA\ar[dr,phantom,"\lrcorner",near start]\ar{r}[above]{\psi^*\iota}\ar[d,two heads] & E\ar[two heads]{d}[right]{\psi}\\
      A\ar{r}[below]{\iota} & B
    \end{tikzcd}\quad
    \begin{tikzcd}
	    {|E\times_BA|}\ar[r,hookrightarrow]\ar[d,two heads]\ar[dr,phantom,"\lrcorner",near start] & {|E|}\ar{d}[right]{|\psi|}\\
	    {|A|}\ar[hookrightarrow]{r}[below]{|\iota|} & {|B|}
    \end{tikzcd}
  \end{equation*}
  where $\psi$ and $\iota$ are, respectively, a fibration and a weak equivalence.
  It suffices to show that the top horizontal arrow in the left diagram is a weak equivalence.  
  The case $\iota$ an acyclic fibration follows from the model categorical axioms.  
  It therefore suffices to consider the case where $\iota$ is an acyclic cofibration, and in particular an inclusion of presheaves, because every weak equivalence in a model structure factors into an acyclic cofibration followed by an acyclic fibration.  
  In that case, application of $|-|$ yields the right Cartesian square in $\TOP$.  

  There exist retraction $r$ to $|\iota|$ and homotopy $h:|\iota|r\sim\id_{|B|}$ relative $|A|$ because $|\iota|$ is an acyclic cofibration of topological spaces [Proposition \ref{prop:classical.model.structure}].  
  Thus $d_Bh|\psi|:d_B|\iota|r|\psi|\sim d_B|\psi|=|\psi|d_E$.  
  Hence there exists a map $\tilde{s}:|E|\ra|E|$ and homotopy $\tilde{h}:\tilde{s}\sim d_E$ such that $|\psi|\tilde{h}=d_Bh$ [Lemma \ref{lem:dold.fibration}]; in particular $|\psi|\tilde{s}=d_B|\iota|r|\psi|$ .
  The map $\tilde{s}:|E|\ra|E|$ has image in $|E\times_BA|$ because $d_B$ restricts and corestricts to $d_A$ by naturality and hence the image of $|\psi|\tilde{s}=d_B|\iota|r|\psi|$ lies in $|A|$.  
  Thus $\tilde{s}:|E|\ra|E|$ admits a corestriction $\tilde{r}$ to $|E\times_BA|$ because topological realization sends inclusions to topological embeddings.
  
  Note $\tilde{h}$ defines a homotopy from $|E\times_BA\ira E|\tilde{r}=\tilde{s}$ to $d_E$. 
  Note that $d_B$ and $d_{E}$ respectively restrict and corestrict to $d_A$ and $d_{E\times_BA}$ by naturality.  
  The restriction of $\tilde{h}$ to $|E\times_BA|\times\I$ has image $|E\times_BA|$ because the restriction of $|\psi|\tilde{h}=d_Bh$ to $|A|\times\I$, defined by a map of the form $d_A(|A|\times\I\ra|A|)$, has image $|A|$.  
  Thus $\tilde{h}$ restricts and corestricts to a homotopy $\tilde{r}|E\times_BA\ira E|\sim d_{E\times_BA}$ because topological realization sends inclusions to topological embeddings.
  
	Thus $\tilde{r}|E\times_BA\ira E|\sim d_E\sim\id_{|E|}$ and $|E\times_BA\ira E|\tilde{r}\sim d_{E\times_BA}\sim\id_{|E\times_BA|}$ [Lemmas \ref{lem:tri} and \ref{lem:sd.approx}].  
  Thus $|E\times_BA\ira E|:|E\times_BA|\ira|E|$ is a homotopy equivalence and hence $\psi^*\iota:E\times_BA\ira E$ is a weak equivalence.
\end{proof}

\subsection{Exotic}
Topological realizations often naturally admit extra geometric information encoded in the combinatorics of the presheaves.  
Let $\mathscr{M}$ be a category equipped with a cocontinuous functor $\mathscr{M}\ra\SETS$.  

\begin{eg}
  Motivating examples of $\mathscr{M}$ are categories of \ldots
  \begin{enumerate}
	  \item pseudometric spaces and $1$-Lipschitz maps
	  \item uniform spaces and uniform maps
	  \item directed topological spaces and directed maps. 
  \end{enumerate}
\end{eg}

Fix a small subcategory $\shape{1}$ of $\CATS$. 
Sometimes $|\shape{1}[o]|$ naturally lifts to an $\mathscr{M}$-object natural in $\shape{1}$-objects $o$.  

\begin{eg}
  \label{eg:geometric.cubes}
  Suppose $\BOX$ lies in a chain of subcategories
  $$\DEL_1^*\subset\BOX\subset\POSETS$$
  with $\DEL_1^*$ wide in $\BOX$. 
  The topological $n$-cubes $\I^n=|\BOX\boxobj{n}|$ naturally admit $\ell_\infty$ metrics, unique uniformities compatible with their topologies, and partial orders during them into (pseudo)metric spaces, uniform spaces, and directed topological spaces.
  If $\BOX$ additionally does not have diagonals, then the topological $n$-cubes $\I^n=|\BOX\boxobj{n}|$ alternative admit $\ell_p$ metrics for all $1\leqslant p\leqslant\infty$ turning them into (pseudo)metric spaces.  
\end{eg}

Suppose $|\shape{1}[-]|$ lifts to a solid horizontal functor in the diagram
\begin{equation*}
  \begin{tikzcd}
	  \shape{1}\ar[rr]\ar{d}[left]{\shape{1}[-]} & & \mathscr{M}\ar[d]\\
	  \hat{\shape{1}}\ar[dotted]{urr}[description]{|-|_{\catfont{GEO}}}\ar{r}[below]{|-|} & \TOP\ar[r] & \SETS.
  \end{tikzcd}
\end{equation*}

We can define geometric realization $|-|_{\catfont{GEO}}$ as the left Kan extension of the top horizontal functor along the Yoneda embedding $\shape{1}[-]$, necessarily making the entire diagram commute where $\TOP\ra\SETS$ denotes the forgetful functor and $\mathscr{M}\ra\SETS$ is the given cocontinuous functor.

\begin{eg}
  \label{eg:geometric.realizations}
  Suppose $\BOX$ lies in a chain of subcategories
  $$\DEL_1^*\subset\BOX\subset\POSETS$$
  with $\DEL_1^*$ wide in $\BOX$.  
  Then \textit{directed, $\ell_\infty$, and uniform realizations} can be defined on cubical sets.
  If $\BOX$ does not contain the diagonal $[1]\ra[1]^2$, then \textit{$\ell_p$ realizations} can also be defined on cubical sets for all $1\leqslant p\leqslant\infty$.
\end{eg}

Homotopy theories of cubical sets defined in terms of such geometric realization functors have been explored elsewhere.  
The adaptation of cubical approximation theorems for $\BOX=\PARSIMONIOUSBOX$ with respect to these more exotic homotopy theories are mostly straightforward and not given in this paper.  


\bibliography{gv}{}
\bibliographystyle{plain}

\end{document}